\newlength{\extramargin}
\newcommand {\Real}{\ensuremath{{\mathbb{R}}}}
\newcommand {\Complex}{\ensuremath{{\mathbb{C}}}}
\newcommand{\R}{\ensuremath{\mathcal R}}
\newcommand{\Q}{\ensuremath{\mathcal Q}}
\newcommand{\D}{\ensuremath{\mathcal D}}
\newcommand{\setP}{\ensuremath{\mathcal P}}
\newcommand{\I}{\ensuremath{\mathcal I}}
\newcommand{\V}{\ensuremath{\mathcal V}}
\newcommand{\setS}{\ensuremath{\mathcal S}}
\newcommand{\M}{\ensuremath{\mathcal M}}
\newcommand{\setE}{\ensuremath{\mathcal E}}
\newcommand{\G}{\ensuremath{\mathcal G}}
\newcommand{\N}{\ensuremath{\mathcal N}}
\newcommand{\yu}{\ensuremath{{\mathbf{u}}}}
\newcommand{\ex}{\ensuremath{{\mathbf{x}}}}
\newcommand{\Abig}{\ensuremath{{\mathbf{A}}}}
\newcommand{\Bbig}{\ensuremath{{\mathbf{B}}}}
\newcommand{\Wbig}{\ensuremath{{\mathbf{W}}}}
\newcommand{\exr}{\ensuremath{{\mathbf{x}}_{\rm r}}}
\newcommand{\Abigr}{\ensuremath{{\mathbf{A}}_{\rm r}}}
\newcommand{\Bbigr}{\ensuremath{{\mathbf{B}}_{\rm r}}}
\newcommand{\Wbigr}{\ensuremath{{\mathbf{W}}_{\rm r}}}
\newcommand{\yur}{\ensuremath{{\mathbf{u}}_{\rm r}}}
\newtheorem{theorem}{Theorem}
\newtheorem{lemma}{Lemma}
\newtheorem{definition}{Definition}
\newtheorem{assumption}{Assumption}
\newtheorem{proposition}{Proposition}
\newenvironment{proof}{\noindent {\bf Proof.}}{\hfill \hspace*{1pt}\hfill$\blacksquare$}
\begin{document}
\title{Positive controllability of networks under relative actuation}
\author{S. Emre Tuna\footnote{The author is with Department of
Electrical and Electronics Engineering, Middle East Technical
University, 06800 Ankara, Turkey. The work has been completed
during his sabbatical stay at Department of Electrical and Electronic
Engineering, The University of Melbourne, Victoria 3010,
Australia. Email: {\tt etuna@metu.edu.tr}}} \maketitle

\begin{abstract}
For arrays of identical linear systems coupled through relative actuation four problems are studied: controllability, positive controllability, pairwise controllability, and positive pairwise controllability. To this end, related to the eigenvalues of the system matrix, certain graphs with possibly vector-valued edge weights are constructed. It is shown that array controllability and
graph connectivity are equivalent. Similar equivalences are established also between positive controllability \& strong connectivity, pairwise controllability \& pairwise connectivity, and positive pairwise controllability \& strong pairwise connectivity.
\end{abstract}

\section{Introduction}

Probably since Huygens pointed out the synchronization of two pendulum clocks, it must have been
self-evident that the collective behavior of a group of interacting systems should be determined by the connectivity of certain graph(s) representing (in some way) the interconnection between the individual units. What in general is not evident however is how to dig out the
graphs whose connectivity determines what need be determined.
\begin{figure}[h]
\begin{center}
\includegraphics[scale=0.4]{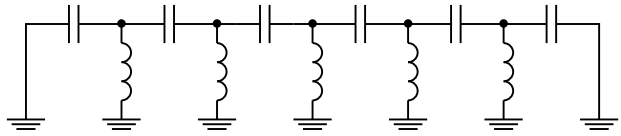}\\
\caption{10th order LC oscillator.}
\label{fig:oscillator}
\end{center}
\end{figure}
For instance, consider the individual electrical oscillator in Fig.~\ref{fig:oscillator}, where unit (1H) inductors are connected by unit (1F) capacitors as shown. Let us form two separate arrays, each containing three identical replicas of this oscillator coupled via unit (1$\Omega$) resistors as in Fig.~\ref{fig:arrays}a and Fig.~\ref{fig:arrays}b, respectively. Although neither array look more connected than the other to the eye, there is a significant  qualitative difference in their behaviors: starting from arbitrary initial conditions the oscillators in Fig.~\ref{fig:arrays}a always synchronize in the steady state, whereas those in Fig.~\ref{fig:arrays}b do not tend to oscillate in unison. This failure to synchronize can be traced back to the lack of connectivity of a certain graph.

\begin{figure}[h]
\begin{center}
\includegraphics[scale=0.3]{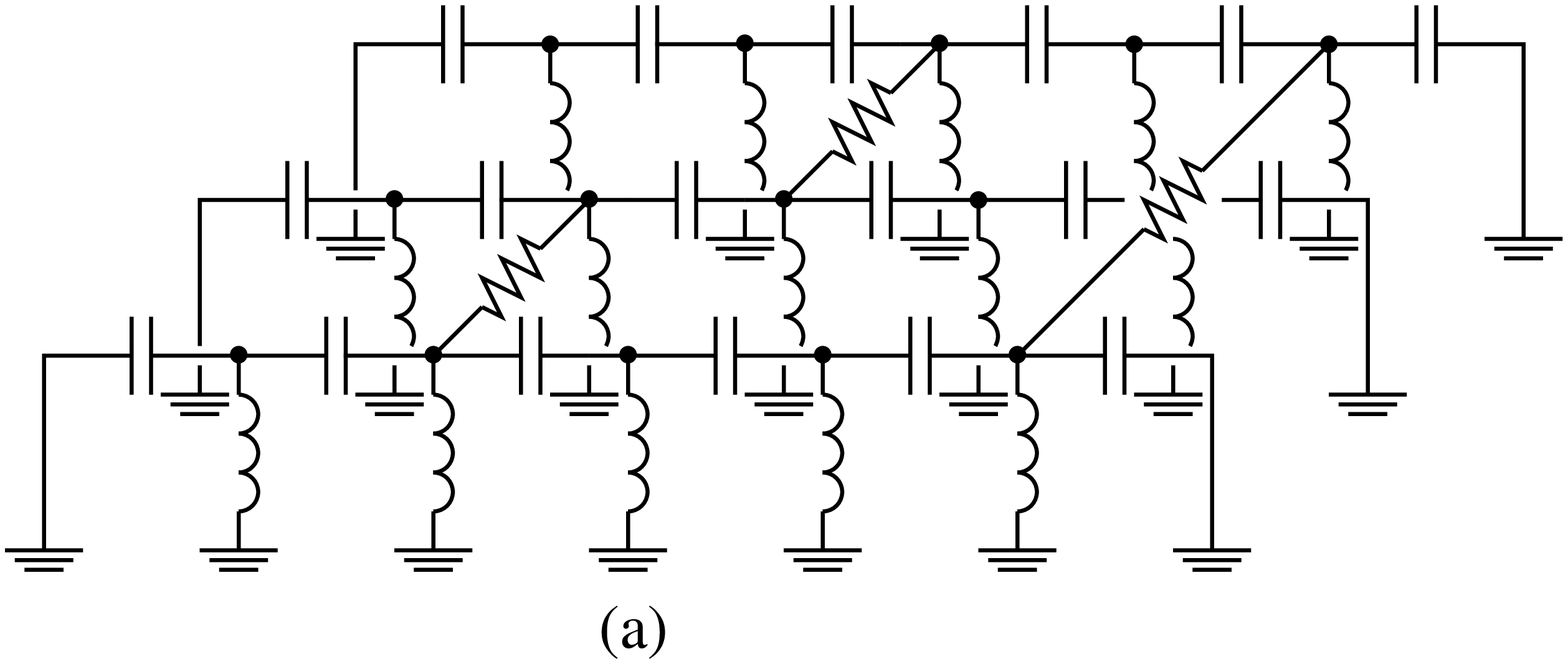}\includegraphics[scale=0.3]{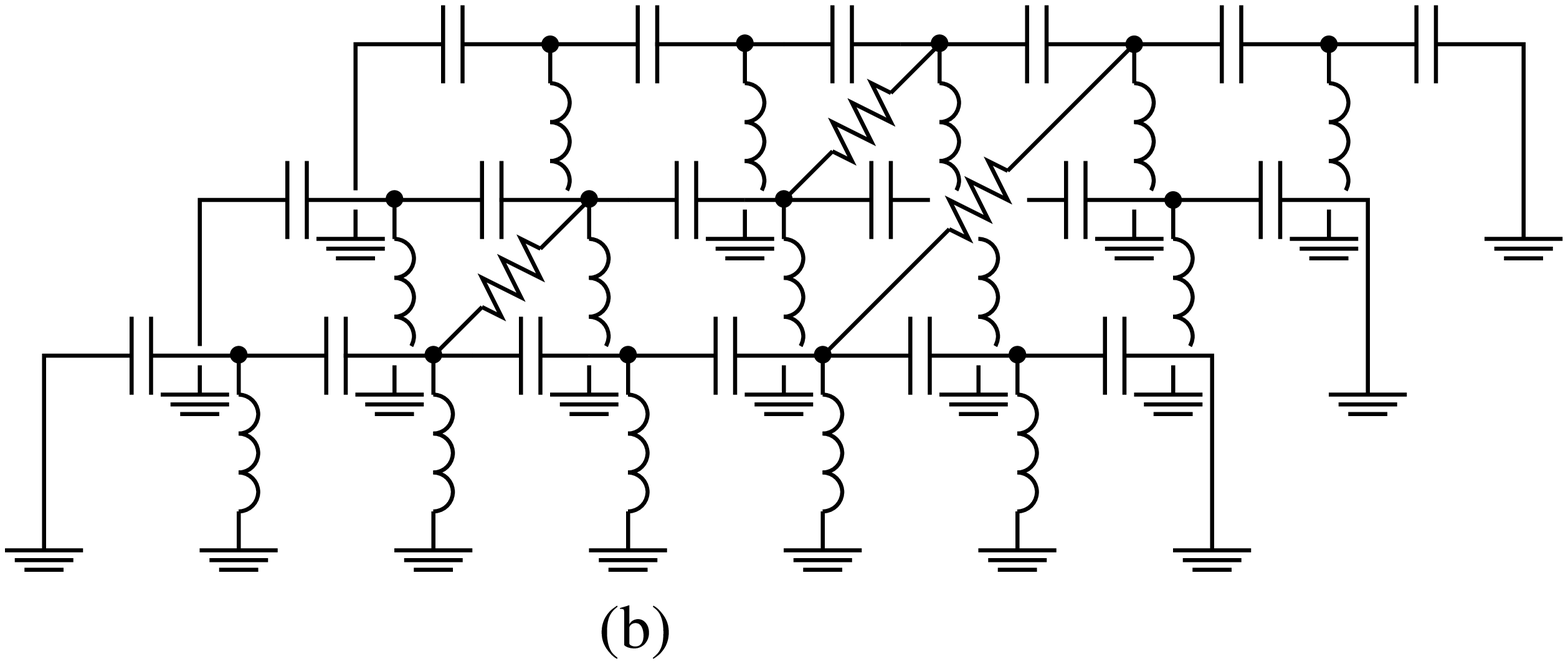}
\caption{Arrays of coupled oscillators. The array (a) synchronizes; the array (b) does not.}
\label{fig:arrays}
\end{center}
\end{figure}

Implicit in the above example is the importance of the role connectivity plays in network controllability. In fact, if the resistors in Fig.~\ref{fig:arrays}b are replaced by current sources (as our control inputs) the new array cannot be steered toward synchronization. The reason, not surprisingly, is the disconnectivity of the graph that was behind the failure of
synchronization in the old array. To see the relation between connectivity and controllability explicitly, let us visit a simpler example where the graph is not hidden. Consider three identical water tanks (integrators) connected via water pumps as shown in Fig.~\ref{fig:tanks}.
\begin{figure}[h]
\begin{center}
\includegraphics[scale=0.55]{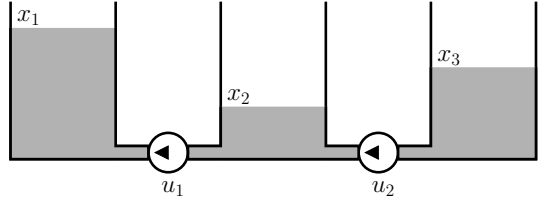}\\
\caption{Array of three water tanks.}
\label{fig:tanks}
\end{center}
\end{figure}
Letting $x_{i}$ denote the water volume (${\rm m}^{3}$) contained in the $i$th tank and $u_{\sigma}$ the flow rate (${\rm m}^{3}/{\rm h}$) through the $\sigma$th pump we can write the dynamics as
\begin{eqnarray}\label{eqn:watertanks}
\left[\begin{array}{c}{\dot x}_{1}\\
{\dot x}_{2}\\
{\dot x}_{3}\end{array}\right]=
\underbrace{\left[\begin{array}{rr}1&0\\
-1&1\\
0&-1\end{array}\right]}_{\displaystyle B}\left[\begin{array}{c}u_{1}\\
u_{2}\end{array}\right]\,.
\end{eqnarray}
The pleasant (zero column sum) structure of the matrix $B$ is shared by the incidence matrix  representing the graph $\Gamma$ in Fig.~\ref{fig:graph}. Observe that $\Gamma$ is (weakly) connected\footnote{In this paper by {\em connected} we mean {\em weakly connected}.} yet not strongly connected. This has two apparent implications. First, because the graph $\Gamma$ is connected the array is (relatively) controllable by which we mean that the relative states $x_{i}-x_{j}$ can be adjusted arbitrarily. That is, with bidirectional pumps the relative water levels can be simultaneously steered to any desired values regardless of the initial distribution. Second, because the graph $\Gamma$ is not strongly connected the array is not positively controllable. This translates to that with unidirectional pumps ($u_{\sigma}\geq 0$) the water levels cannot in general be equalized. At least three pumps are needed for that since at least three edges are needed for a 3-node graph to be strongly connected.

\begin{figure}[h]
\begin{center}
\includegraphics[scale=0.5]{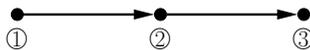}\\
\caption{The graph $\Gamma$ whose incidence matrix is $B$.}
\label{fig:graph}
\end{center}
\end{figure}

The water tanks example clearly illustrates the link between network controllability and graph connectivity. Meanwhile, as the oscillator array example indicates, the graphs whose connectivity determines controllability may not be apparent and therefore revealing them may require some effort. This paper is a report on such effort. Our setup is an array of linear time-invariant (LTI) systems driven by relative actuators. Specifically, the $i$th individual system's ($n$th order) dynamics reads ${\dot x}_{i}=Ax_{i}+\sum_{\sigma}B_{i\sigma}u_{\sigma}$ with $\sum_{i}B_{i\sigma}=0$. For this setup we study, from the connectivity point of view, four problems: controllability, positive controllability, pairwise controllability, and positive pairwise controllability.

{\em Controllability.} The literature on network controllability has so far concentrated on a somewhat different problem concerning a different setup than ours given above. The generally adopted node dynamics are first order and there is coupling between nodes even when the inputs are zero.
In addition, there is no relative actuation constraint. Namely, ${\dot x}_{i}=\sum_{j}a_{ij}x_{j}+\sum_{\sigma}b_{i\sigma}u_{\sigma}$ where $x_{i}\in\Real$.
Since the inputs are not relative, complete controllability is possible
and that is what has been thoroughly investigated. Generally speaking,
the problem that has received much attention
concerns with the question of how to achieve controllability
with as few inputs (or driver nodes) as possible; see, for instance, \cite{rahmani09,liu11,yuan13,olshevsky14}. In this particular direction a wealth of
results has accumulated, e.g., \cite{notarstefano13,commault13,pequito16,wang12,wang17}, starting possibly with Lin's work~\cite{lin74} on structural controllability. While these work dwell upon the ``how?'' for networks with first order node dynamics, we focus (in Section~\ref{sec:control}) on the simpler ``yes/no?'' for higher order dynamics. Namely, for an array with $q$ systems (nodes) and $p$ (relative) inputs, represented by the pair $[A,\,(B_{i\sigma})_{i,\sigma=1}^{q,p}]$ we present a necessary and sufficient condition for controllability\footnote{As mentioned earlier, when we use the word {\em controllable} to indicate an array we mean that all relative states $x_{i}-x_{j}$ (as opposed to actual states $x_{i}$) can be controlled. Since the actuation is relative in our setup, this is the most that can be achieved in terms of controllability. For instance, the total amount of water in the tanks in Fig.~\ref{fig:tanks} is constant and therefore independent of the control inputs driving the array.} from the graph connectivity point of view. The result is based on tools from classical control theory. The presented connectivity condition can indeed be seen as a certain reformulation of Popov-Belevitch-Hautus (PBH) test exploiting the special structure of our setup.

{\em Positive controllability.~\footnote{The term ``positive controllability'' seems to have been coined by Yoshida and his coauthors in the paper~\cite{yoshida94}.}} One of the earliest things we learn in life is how to steer a particular system (our body) with one-way actuators, for our muscles function that way. That is, a muscle can only pull or contract, but cannot (actively) push or extend. Another instance from biology of a one-way actuator is insulin, a key hormone in regulating the sugar level in blood. Insulin cannot undo what it does therefore pancreas employs another one-way agent, glucagon, to achieve proper regulation. Examples are not scarce outside biology; see, for instance, \cite{eden16} and references therein. The earliest work on controllability of LTI systems with positive controls (one-way actuators) is \cite{saperstone71}. Later Brammer provides a general eigenvector test~\cite{brammer72} which arguably is the most effective tool we have today on positive controllability of continuous-time LTI systems. Certain refinements/reformulations of Brammer's test are reported in \cite{heymann75,yoshida07}. Among the very few works on positive controllability of networks is \cite{lindmark16}, where the authors study first order node dynamics. Here, for arrays with $n$th order node dynamics, we provide in Section~\ref{sec:poscontrol} a necessary and sufficient strong connectivity condition for the positive controllability of an array. Just as our connectivity condition for controllability can be seen as a reformulation of PBH test, our strong connectivity condition for positive controllability is a natural extension of the refinements~\cite{yoshida07,lindmark16} of Brammer's eigenvector test.

{\em Pairwise controllability.} For an uncontrollable array, while it is not possible to steer all relative states, it is of interest to determine the subarrays of states that can be controlled relatively. The problem, at least for primitive arrays, is closely related to determining the connected components of an unconnected graph, which can be studied by means of paths connecting pairs of nodes. This motivates us to analyze (from connectivity point of view) the so called pairwise controllability, roughly described as follows. For a given pair $(k,\,\ell)$ of indices, an array is $(k,\,\ell)$-controllable when the difference $x_{k}-x_{\ell}$ can be arbitrarily adjusted. (The actual definition is subtler; see Def.~\ref{def:klcon}.) The outcome of our analysis is presented in Section~\ref{sec:paircon}, where we provide necessary and sufficient connectivity conditions for $(k,\,\ell)$-controllability. From the geometric point of view what is done is in effect checking whether a certain subspace (corresponding to $(k,\,\ell)$-controllability of the array) is contained in the overall controllable subspace.

{\em Positive pairwise controllability.} Last in our sequence of problems is the characterization of pairwise controllability of an array with positive controls. The off-the-shelf tools (such as controllability matrix, PBH test, Brammer's test) we use for the previous problems turn out not to be of much help here. Hence the analysis is of slightly different spirit and lengthier than before. However the end results (presented in Section~\ref{sec:pairposcon}) are of the same nature. In particular, positive pairwise controllability is interpreted in terms of strong connectivity of a pair of graph nodes.

To summarize, the contribution of this paper is intended to be in showing that the well-known close relation between controllability and connectivity for arrays with first order node dynamics naturally continue to exist for a class of arrays with higher order node dynamics. To this end, we study the above mentioned four facets of (relative) controllability. In particular, we establish connectivity characterizations of (pairwise) controllability as well as strong connectivity characterizations of positive (pairwise) controllability. With the possible exception of the contents of Section~\ref{sec:pairposcon}, the analysis methods employed in our work are not new; we borrow a great deal from the classical control theory toolbox. However, what we believe is fresh here is the perspective through which we tackle the problems at hand.

\section{Array}
A pair $[A,\,(B_{i\sigma})_{i,\sigma=1}^{q,p}]$ is meant to represent an {\em array} of $q\geq 2$ LTI systems
\begin{eqnarray}\label{eqn:array}
{\dot x}_{i}&=&Ax_{i}+\sum_{\sigma=1}^{p}B_{i\sigma}u_{\sigma}\,,\qquad i=1,\,2,\,\ldots,\,q
\end{eqnarray}
where $x_{i}\in\Real^{n}$ is the state of the $i$th system with $A\in\Real^{n\times n}$. The $\sigma$th (scalar) input is denoted by $u_{\sigma}\in\Real$. The input matrices
$B_{i\sigma}\in\Real^{n\times 1}$ are assumed to satisfy
\begin{eqnarray}\label{eqn:relative}
\sum_{i=1}^{q}B_{i\sigma}=0\,,\qquad \sigma=1,\,2,\,\ldots,\,p\,.
\end{eqnarray}
The constraint~\eqref{eqn:relative} means that the actuation is {\em relative}.
Hence the average of the states $x_{\rm av}=q^{-1}\sum x_{i}$ evolves
independently of the inputs driving the array, i.e., we have ${\dot x}_{\rm av}=Ax_{\rm av}$. The shorthand notation $(B_{::})$ represents the ordered collection $(B_{i\sigma})_{i,\sigma=1}^{q,p}$. Given some $d\times n$ matrix $M$, we write $(MB_{::})$ to mean the collection $(\tilde B_{i\sigma})_{i,\sigma=1}^{q,p}$ with $\tilde B_{i\sigma}=MB_{i\sigma}$.
For an index set $\I=\{\sigma_{1},\,\sigma_{2},\,\ldots,\,\sigma_{r}\}\subset\{1,\,2,\,\ldots,\,p\}$ the subcollection $(B_{i\sigma})_{i=1,\sigma\in\I}^{q}$ is denoted by $(B_{:\sigma})_{\sigma\in\I}$. The corresponding {\em incidence matrix} is constructed as
\begin{eqnarray*}
{\rm inc}\,(B_{:\sigma})_{\sigma\in\I}=
\left[\begin{array}{cccc}
B_{1\sigma_{1}}&B_{1\sigma_{2}}&\cdots&B_{1\sigma_{r}}\\
B_{2\sigma_{1}}&B_{2\sigma_{2}}&\cdots&B_{2\sigma_{r}}\\
\vdots&\vdots&\ddots&\vdots\\
B_{q\sigma_{1}}&B_{q\sigma_{2}}&\cdots&B_{q\sigma_{r}}
\end{array}\right]\,.
\end{eqnarray*}

\begin{definition}
An array $[A,\,(B_{::})]$ is said to be {\em controllable} if for each set of initial
conditions $(x_{1}(0),\,$ $x_{2}(0),\,\ldots,\,x_{q}(0))$  there exist a finite time $\tau>0$ and
input signals $u_{\sigma}:[0,\,\tau]\mapsto\Real$ such that $x_{i}(\tau)=x_{j}(\tau)$
for all $(i,\,j)$. The array is said to be {\em positively controllable} if the input signals
can be chosen to satisfy $u_{\sigma}:[0,\,\tau]\mapsto\Real_{\geq 0}$.
\end{definition}

\begin{definition}\label{def:klcon}
For a pair of distinct indices $k,\,\ell\in\{1,\,2,\,\ldots,\,q\}$
an array $[A,\,(B_{::})]$ is said to be {\em $(k,\,\ell)$-controllable} if for each set of initial conditions $(x_{1}(0),\,x_{2}(0),\,\ldots,\,x_{q}(0))$  there exist a finite time $\tau>0$ and input signals $u_{\sigma}:[0,\,\tau]\mapsto\Real$ such that $x_{k}(\tau)=x_{\ell}(\tau)$ and
$x_{i}(\tau)=e^{A\tau}x_{i}(0)$ for all $i\neq k,\,\ell$. The array is said to be {\em positively $(k,\,\ell)$-controllable} if the input signals can be chosen to satisfy $u_{\sigma}:[0,\,\tau]\mapsto\Real_{\geq 0}$.
\end{definition}

Our goal in this paper is to interpret the above definitions in terms of connectivity properties of certain graphs related to the array~\eqref{eqn:array}. In particular, we characterize (positive) controllability and (positive)  $(k,\,\ell)$-controllability in terms of (strong) connectivity and (strong) $(k,\,\ell)$-connectivity, respectively. Since our analysis heavily depends on graphs it is worthwhile to recall the relevant basics of graph theory. This we do next.

\section{Graph}

The next few definitions are borrowed from \cite{davis54}. The convex cone that is
{\em positively spanned} by the vectors $g_{1},\,g_{2},\,\ldots,\,g_{p}\in\Real^{q}$
is defined as
\begin{eqnarray*}
{\rm cone}\,\{g_{1},\,g_{2},\,\ldots,\,g_{p}\}=\left\{\zeta:\zeta=\sum_{\sigma = 1}^{p}\alpha_{\sigma}g_{\sigma},\,\alpha_{\sigma}\in\Real_{\geq 0}\right\}\,.
\end{eqnarray*}
In other words, ${\rm cone}\,\{g_{1},\,g_{2},\,\ldots,\,g_{p}\}$ is the set of all {\em positive combinations} of $g_{1},\,g_{2},\,\ldots,\,g_{p}$. For $\alpha\in\Real^{p}$ we write $\alpha\geq 0$
to mean $\alpha$ has no negative entry. Likewise, $\alpha\leq 0$ means $-\alpha\geq 0$. The convex cone spanned by the columns of a matrix $G$ is denoted by ${\rm cone}\,G$. That is, ${\rm cone}\,G=\{\zeta: \zeta=G\alpha,\,\alpha\geq 0\}$. The range and null spaces of $G$ are denoted by ${\rm range}\,G$ and ${\rm null}\,G$, respectively. The conjugate transpose of $G$ is denoted by $G^{*}$. (If $G$ is real then $G^{*}$ is simply the transpose of $G$.) The {\em synchronization subspace} is defined as $\setS_{n}={\rm range}\,[{\mathbf 1}_{q}\otimes I_{n}]$, where
${\mathbf 1}_{q}$ is the $q$-vector of all ones and
$I_{n}$ is the $n\times n$ identity matrix. $\setS_{n}^{\perp}$ denotes the orthogonal complement
of $\setS_{n}$. We say $G$ belongs to
{\em class-$\G_{n}$} ($G\in\G_{n}$) if ${\rm range}\,G\subset\setS_{n}^{\perp}$.
We let $e_{i}$ be the unit $q$-vector
with $i$th entry one and the remaining entries zero.

A (directed) graph $\Gamma=(\V,\,\setE,\,g)$ has a set of vertices (or nodes) $\V=\{v_{1},\,v_{2},\,\ldots,\,v_{q}\}$, a set of edges (or arcs)
$\setE=\{a_{1},\,a_{2},\,\ldots,\,a_{p}\}$, and a function $g:\setE\to\V\times\V$ that maps
each edge to an ordered pair $g(a_{\sigma})=(v_{i},\,v_{j})$ for some $i\neq j$. We allow parallel edges, i.e., $g$ need not be injective. By slight abuse of notation we sometimes call $(v_{i},\,v_{j})$ an edge and write
$(v_{i},\,v_{j})\in\setE$ when some $a_{\sigma}\in\setE$ exists satisfying $g(a_{\sigma})=(v_{i},\,v_{j})$.
Also, we write $-(v_{i},\,v_{j})$ to mean $(v_{j},\,v_{i})$.
A directed path from $v_{k}$ to $v_{\ell}$ ($k\neq \ell$) is a sequence of pairs $((v_{i_{1}},\,v_{i_{2}}),\,(v_{i_{2}},\,v_{i_{3}}),\,\ldots,\,(v_{i_{r}},\,v_{i_{r+1}}))$
satisfying $i_{1}=k$, $i_{r+1}=\ell$, and $(v_{i_{j}},\,v_{i_{j+1}})\in\setE$ for all
$j=1,\,2,\,\ldots,\,r$. An undirected path between $v_{k}$ and $v_{\ell}$ ($k\neq \ell$) is a sequence of pairs $((v_{i_{1}},\,v_{i_{2}}),\,(v_{i_{2}},\,v_{i_{3}}),\,\ldots,\,(v_{i_{r}},\,v_{i_{r+1}}))$
satisfying $i_{1}=k$, $i_{r+1}=\ell$, and for each $j=1,\,2,\,\ldots,\,r$ either $(v_{i_{j}},\,v_{i_{j+1}})$ or $(v_{i_{j+1}},\,v_{i_{j}})$ belongs to $\setE$. We adopt the convention that there is a (un)directed path from each vertex to itself despite that we allow no
loop edges $(v_{i},\,v_{i})$. For $k\neq\ell$ the graph $\Gamma$ is said to be {\em strongly $(k,\,\ell)$-connected} if there exist two directed paths, one from $v_{k}$ to $v_{\ell}$, the other from $v_{\ell}$ to $v_{k}$. It
is said to be {\em $(k,\,\ell)$-connected} if there exists an undirected path between
$v_{k}$ and $v_{\ell}$. It is said to be {\em (strongly) connected} if it is (strongly) $(k,\,\ell)$-connected for all $(k,\,\ell)$. The incidence matrix $[g_{i\sigma}]=G\in\Real^{q\times p}$ of the graph $\Gamma$ is such that the edge $a_{\sigma}$ with $g(a_{\sigma})=(v_{i},\,v_{j})$ is represented by the $\sigma$th column of $G$ in the following way: $g_{i\sigma}=1$, $g_{j\sigma}=-1$, and the remaining entries of the column are zero. I.e., $g(a_{\sigma})=(v_{i},\,v_{j})$ implies $\sigma$th column of $G$ equals $e_{i}-e_{j}$. Note that $G\in\G_{1}$ since ${\mathbf 1}_{q}^{*}G=0$. We now make the following simple observations.

\begin{proposition}\label{prop:graph}
Let $G\in\Real^{q\times p}$ be the incidence matrix of some graph $\Gamma=(\V,\,\setE,\,g)$. We have the following.
\begin{enumerate}
\item The graph $\Gamma$ is strongly $(k,\,\ell)$-connected if and only if ${\rm cone}\,G\supset{\rm range}\,[e_{k}-e_{\ell}]$.
    \item The graph $\Gamma$ is $(k,\,\ell)$-connected if and only if ${\rm range}\,G\supset{\rm range}\,[e_{k}-e_{\ell}]$.
\item The graph $\Gamma$ is strongly connected if and only if ${\rm cone}\,G\supset\setS_{1}^{\perp}$.
\item The graph $\Gamma$ is connected if and only if ${\rm range}\,G\supset\setS_{1}^{\perp}$.
\end{enumerate}
\end{proposition}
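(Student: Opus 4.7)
The plan is to prove parts (1) and (2) directly by relating directed/undirected paths to positive/linear combinations of columns of $G$, and then to deduce (3) and (4) by showing that $\setS_1^\perp$ is positively (respectively, linearly) spanned by the vectors $\{e_k-e_\ell\}_{k\neq\ell}$.

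For part (1), the forward direction is the easy telescoping argument: a directed path from $v_k$ to $v_\ell$ through intermediate vertices $v_{i_1}=v_k,\,v_{i_2},\,\ldots,\,v_{i_{r+1}}=v_\ell$ picks out $r$ columns of $G$, each of the form $e_{i_j}-e_{i_{j+1}}$; summing them gives $e_k-e_\ell\in\text{cone}\,G$. Strong $(k,\ell)$-connectivity provides the companion path yielding $e_\ell-e_k\in\text{cone}\,G$, and a cone containing both $\pm(e_k-e_\ell)$ contains the entire line through them. The converse is the main obstacle: given $e_k-e_\ell=G\alpha$ with $\alpha\geq 0$, I need to extract a directed path. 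I would treat $\alpha$ as a nonnegative flow on $\Gamma$ whose net divergence is $+1$ at $v_k$, $-1$ at $v_\ell$, and zero elsewhere, then invoke (or inductively prove) the standard flow-decomposition lemma: any such flow is a nonnegative combination of indicator flows of directed $v_k\to v_\ell$ paths and directed cycles. In particular at least one directed path from $v_k$ to $v_\ell$ is present in the support of $\alpha$. The symmetric argument applied to $e_\ell-e_k\in\text{cone}\,G$ yields the reverse path, establishing strong $(k,\ell)$-connectivity.

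Part (2) is the analogous statement without sign constraints. A $(k,\ell)$-connecting undirected path yields $e_k-e_\ell$ as a $\pm 1$ combination of incidence columns (the sign on each column is $+1$ if the edge is traversed in its native orientation, $-1$ otherwise), so $e_k-e_\ell\in\text{range}\,G$. For the converse, $e_k-e_\ell=G\alpha$ with $\alpha\in\Real^p$ is again a divergence-balanced flow once one reverses the orientation of every edge whose coefficient is negative, and the same flow-decomposition idea, applied to the resulting auxiliary digraph, yields a $v_k\to v_\ell$ directed path which corresponds to an undirected path in the original $\Gamma$.

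Parts (3) and (4) reduce to (1) and (2) once one knows that $\setS_1^\perp=\{\zeta\in\Real^q:\mathbf{1}_q^*\zeta=0\}$ is positively spanned (a fortiori, linearly spanned) by $\mathcal{D}=\{e_i-e_j:i\neq j\}$. For the linear statement needed in (4), the identity $\zeta=\sum_{i=1}^q \zeta_i(e_i-e_1)$ is valid whenever $\sum_i\zeta_i=0$, and each summand lies in $\text{range}\,G$ by (2) under connectivity, so $\text{range}\,G\supset\setS_1^\perp$; conversely $\text{range}\,G\supset\setS_1^\perp$ forces every $e_k-e_\ell$ into $\text{range}\,G$ and hence $(k,\ell)$-connectivity for all pairs. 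For the positive version in (3), I would split any $\zeta\in\setS_1^\perp$ as $\zeta=\zeta^+-\zeta^-$ with disjoint supports $P,\,N$ and $\mathbf{1}^*\zeta^+=\mathbf{1}^*\zeta^-=s$, and then use a transportation-type construction (coefficients $\mu_{ij}=\zeta_i^+\zeta_j^-/s$ for $(i,j)\in P\times N$ work) to write $\zeta$ as a nonnegative combination of $e_i-e_j$'s. Combined with (1), strong connectivity then yields $\text{cone}\,G\supset\setS_1^\perp$, and the converse is immediate by specializing to $\zeta=e_k-e_\ell\in\setS_1^\perp$ and invoking (1).

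The only genuine difficulty is the flow-decomposition step at the heart of the converse in (1); everything else is bookkeeping or standard linear algebra.
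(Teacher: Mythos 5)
Your proposal is correct, but the heart of it --- the converse of part (1) --- is argued by a genuinely different route than the paper's. You extract a directed $v_k\to v_\ell$ path from a nonnegative solution of $G\alpha=e_k-e_\ell$ via the flow-decomposition theorem (every nonnegative flow with unit excess at $v_k$ and unit deficit at $v_\ell$ splits into directed $v_k\to v_\ell$ path flows and cycle flows), which is a \emph{path-based} certificate. The paper instead argues by contradiction with a \emph{cut-based} certificate: it takes $\V_k$ (vertices reachable from $v_k$) and $\V_\ell$ (vertices reaching $v_\ell$), block-partitions $G$ accordingly, observes that the cross-block $M_-$ has no positive entries, and pairs the nonpositive vector $\beta=[G_k\ M_-\ 0]^{*}{\bf 1}_{q_k}$ against $\alpha\geq 0$ to get $1=\beta^{*}\alpha\leq 0$; in effect the indicator of $\V_k$ is a Farkas-type separating functional witnessing $e_k-e_\ell\notin{\rm cone}\,G$. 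The two arguments are dual in the max-flow/min-cut sense. The paper's version is fully self-contained in half a page; yours delegates the entire difficulty to the flow-decomposition lemma, which is standard but would itself need the inductive proof you allude to for the write-up to be complete --- that is the one item you should not leave as an invocation. Your remaining steps match the paper in substance: reversing negatively-weighted edges for part (2) is the paper's augmented graph $\Gamma_{\rm a}$ with ${\rm cone}\,[G\ -G]={\rm range}\,G$ in disguise, and your explicit spanning identities ($\zeta=\sum_i\zeta_i(e_i-e_1)$ and the transportation weights $\mu_{ij}=\zeta_i^{+}\zeta_j^{-}/s$, both of which check out) make concrete what the paper compresses into ${\rm cone}\,G\supset\sum_{k,\ell}{\rm range}\,[e_k-e_\ell]=\setS_1^{\perp}$ using closure of convex cones under addition.
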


\begin{proof}
{\em 1.} If $\Gamma$ is strongly $(k,\,\ell)$-connected, a directed path $((v_{i_{1}},\,v_{i_{2}}),\,(v_{i_{2}},\,v_{i_{3}}),\,\ldots,\,(v_{i_{r}},\,v_{i_{r+1}}))$ exists
with $i_{1}=k$, $i_{r+1}=\ell$, and $(v_{i_{j}},\,v_{i_{j+1}})\in\setE$ for all
$j=1,\,2,\,\ldots,\,r$. This implies each $e_{i_{j}}-e_{i_{j+1}}$ is a column of $G$ for all
$j=1,\,2,\,\ldots,\,r$. Since we can write
\begin{eqnarray*}
(e_{i_{1}}-e_{i_{2}})+(e_{i_{2}}-e_{i_{3}})+\cdots+(e_{i_{m}}-e_{i_{m+1}})=e_{k}-e_{\ell}
\end{eqnarray*}
we have $e_{k}-e_{\ell}\in{\rm cone}\,G$. Likewise, the existence of a directed path from $v_{\ell}$ to $v_{k}$ yields $e_{\ell}-e_{k}\in{\rm cone}\,G$. Therefore ${\rm cone}\,G\supset\{e_{k}-e_{\ell},\,e_{\ell}-e_{k}\}$, which implies ${\rm cone}\,G\supset{\rm range}\,[e_{k}-e_{\ell}]$.

Let us establish the other direction by contradiction. Suppose there does not exist a directed path from $v_{k}$ to $v_{\ell}$ while ${\rm cone}\,G\supset{\rm range}\,[e_{k}-e_{\ell}]$. Let $\V_{k}\subset\V$ be the set of all vertices to which there is a directed path from $v_{k}$. Similarly, let $\V_{\ell}\subset\V$ be the set of all vertices from which there is a directed path to $v_{\ell}$. (Note that $v_{k}\in\V_{k}$ and $v_{\ell}\in\V_{\ell}$.) Since no directed path exists from $v_{k}$ to $v_{\ell}$, the sets $\V_{k}$ and $\V_{\ell}$ are disjoint. Now define the edge sets $\setE_{k}=\{(v_{i},\,v_{j})\in\setE:v_{i},\,v_{j}\in\V_{k}\}$ and $\setE_{\ell}=\{(v_{i},\,v_{j})\in\setE:v_{i},\,v_{j}\in\V_{\ell}\}$, which, too, are disjoint.
Let $G_{k}$ and $G_{\ell}$ be the incidence matrices of the subgraphs $\Gamma_{k}=(\V_{k},\,\setE_{k},\,g)$ and $\Gamma_{\ell}=(\V_{\ell},\,\setE_{\ell},\,g)$, respectively. Since the vertices and edges can be relabeled, we can assume, without loss of generality, $G$ has the following block structure.
\begin{eqnarray*}
G=\left[\begin{array}{c}\!\!\!\!
\begin{array}{c}
G_{k}\\ 0\\ 0
\end{array}
\vline
\begin{array}{l}
M_{-}\\ M_{*}\\ M_{+}
\end{array}
\vline\begin{array}{c}
0 \\ 0\\ G_{\ell}
\end{array}\!\!\!\!
\end{array}\right]=
\left[\begin{array}{clc}
G_{k}\!\!\!\! & M_{-}\!\!\!\! & 0\\
\hline
\vspace{-0.13in}\\
0\!\!\!\! & M_{*}\!\!\!\! & 0\\
\hline
\vspace{-0.13in}\\
0\!\!\!\! & M_{+}\!\!\!\! & G_{\ell}
\end{array}\right]
\end{eqnarray*}

For the ease of discussion $G$ is partitioned in various ways as shown above. Observe that the entries of the matrix $M_{-}$ (if exists) are either $0$ or $-1$. To see that suppose
otherwise, i.e., $M_{-}$ has an entry $g_{i\sigma}=1$. Let $j$ be such that $g_{j\sigma}=-1$.
Since $g_{i\sigma}$ belongs to the center column partition, the edge $a_{\sigma}$ (satisfying $g(a_{\sigma})=(v_{i},\,v_{j})$) can belong neither to $\setE_{k}$ nor to $\setE_{\ell}$. Moreover, $v_{i}\in\V_{k}$ because $g_{i\sigma}$ belongs to the upper row partition. By definition, $v_{i}\in\V_{k}$ implies there is a directed path from $v_{k}$ to $v_{i}$. The existence of the edge $(v_{i},\,v_{j})$ implies that this path can be extended to $v_{j}$. Consequently $v_{j}\in \V_{k}$. Since both vertices $v_{i},\,v_{j}$ belong to $\V_{k}$ we have to have $a_{\sigma}\in\setE_{k}$, but this we already ruled out. Therefore $M_{-}$ can have only nonpositive entries.

Since ${\rm cone}\,G\supset{\rm range}\,[e_{k}-e_{\ell}]$ we can find a $p$-vector $\alpha\geq 0$ satisfying $G\alpha=e_{k}-e_{\ell}$. Let $q_{k}$ be the number of vertices in $\V_{k}$ and $q_{\ell}$ the number of vertices in $\V_{\ell}$. We can write
\begin{eqnarray*}
G\alpha=\left[\begin{array}{clc}
G_{k}\!\!\!\! & M_{-}\!\!\!\! & 0\\
\hline
\vspace{-0.13in}\\
0\!\!\!\! & M_{*}\!\!\!\! & 0\\
\hline
\vspace{-0.13in}\\
0\!\!\!\! & M_{+}\!\!\!\! & G_{\ell}
\end{array}\right]\alpha=\left[\begin{array}{c}
f_{+}\\
\hline
\vspace{-0.13in}\\
f_{0}\\
\hline
\vspace{-0.13in}\\
f_{-}
\end{array}\right]=e_{k}-e_{\ell}
\end{eqnarray*}
where $f_{+}\in\Real^{q_{k}}$ and $f_{-}\in\Real^{q_{\ell}}$. Since $k\leq q_{k}$ and $\ell\geq q-q_{\ell}+1>q_{k}$ the vector $f_{+}=[G_{k}\ M_{-}\ 0]\alpha$ contains exactly one $+1$ entry while its other entries are zero. (Likewise, $f_{-}$ contains exactly one $-1$ entry while its other entries are zero and $f_{0}$ (if exists) is a vector of zeros.) In particular, ${\bf 1}_{q_{k}}^{*}f_{+}=1$. Now define
$\beta=[G_{k}\ M_{-}\ 0]^{*}{\bf 1}_{q_{k}}$. Using $G_{k}^{*}{\bf 1}_{q_{k}}=0$ (because $G_{k}$ is an incidence matrix) and the fact that $M_{-}$ has no positive entry we can write
$\beta
=[G_{k}\ M_{-}\ 0]^{*}{\bf 1}_{q_{k}}
=[0\ M_{-}\ 0]^{*}{\bf 1}_{q_{k}}
\leq 0$. Combining $\beta\leq 0$ with $\alpha\geq 0$ yields $\beta^{*}\alpha\leq 0$.
However this results in the following contradiction
\begin{eqnarray*}
1={\bf 1}_{q_{k}}^{*}f_{+}={\bf 1}_{q_{k}}^{*}[G_{k}\ M_{-}\ 0]\alpha=\beta^{*}\alpha\leq 0\,.
\end{eqnarray*}

{\em 2.} Given $\Gamma=(\V,\,\setE,\,g)$ with $\V=\{v_{1},\,v_{2},\,\ldots,\,v_{q}\}$ and $\setE=\{a_{1},\,a_{2},\,\ldots,\,a_{p}\}$, define the mapping $g_{\rm a}:\setE_{\rm a}\to\V\times\V$ for the {\em augmented} edge set $\setE_{\rm a}=\{a_{1},\,a_{2},\,\ldots,\,a_{2p}\}$ as follows.
\begin{eqnarray*}
g_{\rm a}(a_{\sigma})=\left\{\begin{array}{rcl}
g(a_{\sigma})&\mbox{for}&\sigma=1,\,2,\,\ldots,\,p\\
-g(a_{\sigma})&\mbox{for}&\sigma=p+1,\,p+2,\,\ldots,\,2p\,.
\end{array}
\right.
\end{eqnarray*}
Let $\Gamma_{\rm a}=(\V,\,\setE_{\rm a},\,g_{\rm a})$.
It is not hard to see that $\Gamma$ is $(k,\,\ell)$-connected if and only if
$\Gamma_{\rm a}$ is strongly $(k,\,\ell)$-connected. Moreover, the incidence matrix of $\Gamma_{\rm a}$ reads $G_{\rm a}=[G\ -G]$. Therefore ${\rm cone}\,G_{\rm a}={\rm range}\,G$. Using the first statement of the proposition we can now write
\begin{eqnarray*}
\mbox{$\Gamma$ $(k,\,\ell)$-connected}
&\Longleftrightarrow&\mbox{$\Gamma_{\rm a}$ strongly $(k,\,\ell)$-connected}\\
&\Longleftrightarrow&{\rm cone}\,G_{\rm a}\supset{\rm range}\,[e_{k}-e_{\ell}]\\
&\Longleftrightarrow&{\rm range}\,G\supset{\rm range}\,[e_{k}-e_{\ell}]\,.
\end{eqnarray*}

{\em 3.} If $\Gamma$ is strongly connected, then, by definition, $\Gamma$ is strongly $(k,\,\ell)$-connected for all pairs $(k,\,\ell)$. The first statement of the proposition then allows us
to write
\begin{eqnarray*}
{\rm cone}\,G
\supset\sum_{k,\ell}\,{\rm range}\,[e_{k}-e_{\ell}]
=\setS_{1}^{\perp}\,.
\end{eqnarray*}
If $\Gamma$ is not strongly connected, then there exists a pair $(k,\,\ell)$ for which ${\rm cone}\,G\not\supset{\rm range}\,[e_{k}-e_{\ell}]$. Since $\setS_{1}^{\perp}\supset{\rm range}\,[e_{k}-e_{\ell}]$ we have to have ${\rm cone}\,G
\not\supset\setS_{1}^{\perp}$.

{\em 4.} The result follows from the second statement. The demonstration is similar to that of the third statement.
\end{proof}

\vspace{0.12in}

Proposition~\ref{prop:graph} motivates us for the following generalization.

\begin{definition}\label{def:graph}
A class-$\G_{n}$ matrix $G\in\Complex^{(qn)\times p}$ is said to be:
\begin{itemize}
\item {\em strongly $(k,\,\ell)$-connected} if ($G$ is real and) ${\rm cone}\,G\supset{\rm range}\,[(e_{k}-e_{\ell})\otimes I_{n}]$\,,
    \item {\em $(k,\,\ell)$-connected} if ${\rm range}\,G\supset{\rm range}\,[(e_{k}-e_{\ell})\otimes I_{n}]$\,,
\item {\em strongly connected} if ($G$ is real and) ${\rm cone}\,G\supset\setS_{n}^{\perp}$\,,
\item {\em connected} if ${\rm range}\,G\supset\setS_{n}^{\perp}$\,.
\end{itemize}
\end{definition}

A brief digression is in order here. Definition~\ref{def:graph} intends to extend connectivity, a central notion for graphs, to class-$\G_{n}$ matrices, which may be taken to represent (or be) generalized graphs. In this representation each column of $G\in\G_{n}$ may be treated as an edge. Then a path between $k$th and $\ell$th vertices may be said to exist if for each $\eta\in\Real^{n}$ we can find some edges (columns) $g_{\sigma}$ and some weights $\alpha_{\sigma}$
that take us from vertex $k$ to vertex $\ell$ by satisfying $e_{k}\otimes\eta+\alpha_{1}g_{1}+\alpha_{2}g_{2}+\cdots
+\alpha_{r}g_{r}=e_{\ell}\otimes\eta$. (For a directed path we would require the weights to be positive.) Depicting the endpoints $e_{k}\otimes\eta$ and $e_{\ell}\otimes\eta$ as dots (in space they belong to) and the vectors $\alpha_{\sigma}g_{\sigma}$ as successive line segments connecting the two dots, a geometric interpretation can be obtained. Hence, although it would be difficult (provided it is possible/meaningful) to draw a generalized graph, the notion of connectivity seems to maintain to some degree its visual feature.

Definition~\ref{def:graph} has an interesting claim on hypergraphs,\footnote{A hypergraph is such that each edge is allowed to be incident to more than two vertices.} which can be observed on a simple instance. Consider the array~\eqref{eqn:watertanks} of water tanks under the constraint
$u_{2}=2u_{1}$ which arises, say, because the voltages driving the water pumps are not independent. Then the dynamics reads $[{\dot x}_{1}\ {\dot x}_{2}\ {\dot x}_{3}]^{*}=[1\ \ 1\ -2]^{*}u_{1}$. Let now $G=[1\ \ 1\ -2]^{*}\in\G_{1}$ represent the incidence matrix of a 3-vertex graph; call this graph $\Gamma$. Since $G$ has a single column, $\Gamma$ has a single edge. This edge is incident to all the vertices, for the corresponding column has no zero entries. Therefore $\Gamma$ is a 3-vertex hypergraph  with a single (hyper)edge that is incident to all three vertices. According to the classical definition of connectivity for hypergraphs, $\Gamma$ is connected because any two vertices are adjacent to one another through the only edge. According to Definition~\ref{def:graph} however, $G$ (therefore $\Gamma$) is not connected. In fact, no two vertices are connected since ${\rm range}\,G\not\supset{\rm range}\,[e_{k}-e_{\ell}]$ for all pairs $(k,\,\ell)$. To support Definition~\ref{def:graph} against this discrepancy let us first obtain the Laplacian matrix $L$
of $\Gamma$ from the incidence matrix $G$ as
\begin{eqnarray*}
L=GG^{*}=\left[\begin{array}{rrr}1&1&-2\\1&1&-2\\-2&-2&4\end{array}\right]\,.
\end{eqnarray*}
Then, treating $L$ as the node admittance matrix of a resistive network, we obtain
the conductances $\gamma_{ij}$ between the nodes $i$ and $j$ through
\begin{eqnarray*}
\left[\begin{array}{ccc}
\gamma_{12}+\gamma_{31}&-\gamma_{12}&-\gamma_{31}\\
-\gamma_{12}&\gamma_{23}+\gamma_{12}&-\gamma_{23}\\
-\gamma_{31}&-\gamma_{23}&\gamma_{31}+\gamma_{23}
\end{array}\right]=L
\end{eqnarray*}
as $\gamma_{12}=-1\mho$, $\gamma_{23}=2\mho$, and $\gamma_{31}=2\mho$. These values yield the simple delta network in Fig.~\ref{fig:delta}.
\begin{figure}[h]
\begin{center}
\includegraphics[scale=0.5]{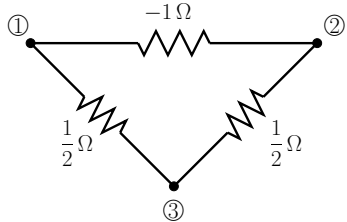}\\
\caption{The delta network with node admittance matrix $L$.}
\label{fig:delta}
\end{center}
\end{figure}
Now, the connectivity of this network can be determined via the circuit theory tool {\em effective conductance}. Namely, the vertices $k$ and $\ell$ of the graph $\Gamma$ is connected if the effective conductance between the nodes $k$ and $\ell$ of the corresponding resistive network in Fig.~\ref{fig:delta} is positive. A quick calculation shows that the effective conductance for any pair of nodes is zero for this network. Hence no two vertices of $\Gamma$ is connected, just as Definition~\ref{def:graph} predicates.

In the remainder of the paper we attempt to interpret different controllability aspects of the array~\eqref{eqn:array} in terms of (strong) connectivity properties of certain class-$\G_{n}$ matrices.

\section{Controllability}\label{sec:control}

Consider the array~\eqref{eqn:array}. By $\mu_{1},\,\mu_{2},\,\ldots,\,\mu_{m}$ we denote the distinct eigenvalues of $A^{*}$. Note that $m\leq n$ and these eigenvalues are shared by $A$. For each $\kappa\in\{1,\,2,\,\ldots,\,m\}$ we let $V_{\kappa}\in\Complex^{n\times d_{\kappa}}$ denote a full column rank matrix satisfying ${\rm range}\,V_{\kappa}={\rm
null}\,[A^{*}-\mu_{\kappa}I_{n}]$. Therefore $d_{\kappa}$ is the geometric multiplicity of the
eigenvalue $\mu_{\kappa}$. We let $V_{\kappa}\in\Real^{n\times d_{\kappa}}$ when $\mu_{\kappa}\in\Real$.
Note that the columns of $V_{\kappa}$ are the
linearly independent eigenvectors of $A^{*}$ corresponding to the
eigenvalue $\mu_{\kappa}$. In particular, we have
$A^{*}V_{\kappa}=\mu_{\kappa}V_{\kappa}$. For notational convenience we sometimes represent the array~\eqref{eqn:array} as a single big system
\begin{eqnarray}\label{eqn:bigsystem}
{\dot \ex}=\Abig\ex+\Bbig\yu
\end{eqnarray}
where $\ex=[x_{1}^{*}\ x_{2}^{*}\ \cdots\ x_{q}^{*}]^{*}$ is the state and
$\yu=[u_{1}\ u_{2}\ \cdots\ u_{p}]^{*}$ is the input. Clearly, we have
\begin{eqnarray*}
\Abig=[I_{q}\otimes A]
\end{eqnarray*}
while $\Bbig\in\Real^{(qn)\times p}$ has the structure
\begin{eqnarray*}
\Bbig={\rm inc}\,(B_{::})=
\left[\begin{array}{cccc}
B_{11}&B_{12}&\cdots&B_{1p}\\
B_{21}&B_{22}&\cdots&B_{2p}\\
\vdots&\vdots&\ddots&\vdots\\
B_{q1}&B_{q2}&\cdots&B_{qp}
\end{array}\right]\,.
\end{eqnarray*}
Note that $\Bbig\in\G_{n}$ due to \eqref{eqn:relative} and ${\rm inc}\,(V_{\kappa}^{*}B_{::})=[I_{q}\otimes V_{\kappa}^{*}]\Bbig$.
The controllability matrix associated to the pair $[\Abig,\,\Bbig]$
reads $[\Bbig\ \Abig\Bbig\ \cdots\ \Abig^{qn-1}\Bbig]$. However, being equal to $[I_{q}\otimes A]$, the matrix $\Abig$ satisfies the characteristic equation of $A$ (which is of order $n$) and therefore the controllability index for the pair $[\Abig,\,\Bbig]$ is at most $n$. Hence we can treat
\begin{eqnarray*}
\Wbig=[\Bbig\ \Abig\Bbig\ \cdots\ \Abig^{n-1}\Bbig]
\end{eqnarray*}
as the controllability matrix.
Indeed, ${\rm range}\,\Wbig$ is the controllable subspace for the pair $[\Abig,\,\Bbig]$.
Observe $\Wbig\in\G_{n}$. Recall that the vector ${\bf 1}_{q}$ spans the synchronization subspace $\setS_{1}$. Let $S$ denote its normalized version, i.e., $S={\bf 1}_{q}/\sqrt{q}$ and hence
$S^{*}S=1$. Also, let $D\in\Real^{q\times (q-1)}$
be some matrix whose columns make an orthonormal basis for $\setS_{1}^{\perp}$, sometimes called the {\em disagreement subspace}. Note that $D^{*}D=I_{q-1}$ and the columns of the matrix $[D\ S]$ make an orthonormal basis for $\Real^{n}$. The following identities are easy to show and find frequent use in the sequel.
\begin{enumerate}
\item[(i)]$DD^{*}+SS^{*}=I_{q}$.
\item[(ii)] ${\rm range}\,[D\otimes I_{n}]=\setS_{n}^{\perp}$.
\item[(iii)] ${\rm null}\,[D^{*}\otimes I_{n}]=\setS_{n}$.
\item[(iv)] $[S^{*}\otimes I_{n}]\Bbig=0$.
\end{enumerate}
The distance of $\ex$ to $\setS_{n}$ we denote by $\|\ex\|_{\setS_{n}}=\left\|[D^{*}\otimes I_{n}]\ex\right\|$. Finally, we define the {\em reduced} parameters
\begin{eqnarray*}
\Abigr&=&[I_{q-1}\otimes A]\\
\Bbigr&=&[D^{*}\otimes I_{n}]\Bbig\\
\Wbigr&=&[\Bbigr\ \Abigr\Bbigr\ \cdots\ \Abigr^{n-1}\Bbigr]\,.
\end{eqnarray*}
The controllability index for the pair $[\Abigr,\,\Bbigr]$ is at most $n$. Hence ${\rm range}\,\Wbigr$ equals the controllable subspace associated to the pair $[\Abigr,\,\Bbigr]$.

\begin{lemma}\label{lem:con}
The following are equivalent.
\begin{enumerate}
\item The array $[A,\,(B_{::})]$ is controllable.
\item ${\rm range}\,\Wbig\supset\setS_{n}^{\perp}$.
\item The pair $[\Abigr,\,\Bbigr]$ is controllable.
\end{enumerate}
\end{lemma}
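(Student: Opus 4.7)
The plan is to reduce the array synchronization problem to a standard LTI controllability question by projecting away the uncontrollable synchronization component and working in the orthonormal coordinates $\zi=[D^{*}\otimes I_{n}]\ex$ on $\setS_{n}^{\perp}$. The relative-actuation constraint~\eqref{eqn:relative} forces the component of $\ex$ lying in $\setS_{n}$ to evolve autonomously under $\Abig$, so all control action happens within $\setS_{n}^{\perp}$, and the synchronization condition $x_{i}(\tau)=x_{j}(\tau)$ for all pairs is simply $\ex(\tau)\in\setS_{n}$, i.e., $\zi(\tau)=0$ by identity~(iii).

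For the equivalence $1\Leftrightarrow 3$, I would begin by verifying the intertwining identity $[D^{*}\otimes I_{n}]\Abig=\Abigr[D^{*}\otimes I_{n}]$, which follows directly from $\Abig=I_{q}\otimes A$ and $\Abigr=I_{q-1}\otimes A$. Differentiating $\zi$ along~\eqref{eqn:bigsystem} then gives $\dot{\zi}=\Abigr\zi+\Bbigr\yu$. As $\ex(0)$ ranges over $\Real^{qn}$ the vector $\zi(0)$ ranges over all of $\Real^{(q-1)n}$, because $[D^{*}\otimes I_{n}]$ is surjective. Consequently array controllability is equivalent to the assertion that every $\zi(0)\in\Real^{(q-1)n}$ can be steered to $0$ in finite time, which for a continuous-time LTI pair coincides with complete controllability of $[\Abigr,\,\Bbigr]$.

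For $2\Leftrightarrow 3$, iterating the intertwining identity yields $[D^{*}\otimes I_{n}]\Abig^{k}\Bbig=\Abigr^{k}\Bbigr$ and hence $\Wbigr=[D^{*}\otimes I_{n}]\Wbig$. Since $\Bbig\in\G_{n}$, one has ${\rm range}\,\Bbig\subset\setS_{n}^{\perp}$, and the identity $\Abig[D\otimes I_{n}]=[D\otimes I_{n}]\Abigr$ shows that $\setS_{n}^{\perp}={\rm range}\,[D\otimes I_{n}]$ is $\Abig$-invariant; therefore ${\rm range}\,\Wbig\subset\setS_{n}^{\perp}$. Using $D^{*}D=I_{q-1}$, the restriction of $[D^{*}\otimes I_{n}]$ to $\setS_{n}^{\perp}$ is a bijection onto $\Real^{(q-1)n}$ with inverse $[D\otimes I_{n}]$. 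Hence ${\rm range}\,\Wbig\supset\setS_{n}^{\perp}$ holds iff ${\rm range}\,\Wbigr=\Real^{(q-1)n}$, which is precisely controllability of $[\Abigr,\,\Bbigr]$.

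The only nontrivial obstacle is essentially clerical: keeping the Kronecker-product algebra straight, and confirming that reaching $\setS_{n}$ coincides with driving $\zi$ to zero over the same horizon with the same input. Once the intertwining identity and the bijectivity of $[D^{*}\otimes I_{n}]$ restricted to $\setS_{n}^{\perp}$ are in place, both equivalences drop out of standard continuous-time LTI controllability theory applied to the reduced pair.
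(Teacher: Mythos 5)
Your proposal is correct and follows essentially the same route as the paper: both rest on the intertwining identity $[D^{*}\otimes I_{n}]\Abig=\Abigr[D^{*}\otimes I_{n}]$, the identity $\Wbigr=[D^{*}\otimes I_{n}]\Wbig$, and the orthogonal splitting $DD^{*}+SS^{*}=I_{q}$. Your observation that $\zi=[D^{*}\otimes I_{n}]\ex$ obeys the closed reduced dynamics merely packages in one line what the paper verifies by two explicit variation-of-constants computations, so the substance is identical.
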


\begin{proof}
{\em 1$\implies$3.} Suppose $[A,\,(B_{::})]$ is controllable. For the system~\eqref{eqn:bigsystem} this means every initial condition $\ex(0)$ can be driven to
$\setS_{n}$ in finite time. Consider the system ${\dot \ex}_{\rm r}=\Abigr\exr+\Bbigr\yur$.
Choose an arbitrary initial condition $\exr(0)\in(\Real^n)^{q-1}$. Now set the initial condition
of the system~\eqref{eqn:bigsystem} as $\ex(0)=[D\otimes I_{n}]\exr(0)$. Then let $\yu:[0,\,\tau]\to\Real^{p}$ be
an input signal that yields $\|\ex(\tau)\|_{\setS_{n}}=0$ for some finite $\tau>0$. Such input signal exists thanks to the controllability of the pair $[A,\,(B_{::})]$.
Let $\yur(t)=\yu(t)$ for $t\in[0,\,\tau]$. Recall that $\|\ex(\tau)\|_{\setS_{n}}=0$ means $[D^{*}\otimes I_{n}]\ex(\tau)=0$.
We can write
\begin{eqnarray*}
\exr(\tau)
&=&e^{\Abigr\tau}\exr(0)+\int_{0}^{\tau}e^{\Abigr(\tau-t)}\Bbigr\yur(t)dt\\
&=&[I_{q-1}\otimes e^{A\tau}]\exr(0)+\int_{0}^{\tau}[I_{q-1}\otimes e^{A(\tau-t)}]\Bbigr\yur(t)dt\\
&=&[D^{*}D\otimes e^{A\tau}]\exr(0)+\int_{0}^{\tau}[I_{q-1}\otimes e^{A(\tau-t)}][D^{*}\otimes I_{n}]\Bbig\yur(t)dt\\
&=&[D^{*}\otimes I_{n}][I_{q}\otimes e^{A\tau}][D\otimes I_{n}]\exr(0)+[D^{*}\otimes I_{n}]\int_{0}^{\tau}[I_{q}\otimes e^{A(\tau-t)}]\Bbig\yur(t)dt\\
&=&[D^{*}\otimes I_{n}][I_{q}\otimes e^{A\tau}]\ex(0)+[D^{*}\otimes I_{n}]\int_{0}^{\tau}[I_{q}\otimes e^{A(\tau-t)}]\Bbig\yu(t)dt\\
&=&[D^{*}\otimes I_{n}]\left(e^{\Abig\tau}\ex(0)+\int_{0}^{\tau}e^{\Abig(\tau-t)}\Bbig\yu(t)dt\right)\\
&=&[D^{*}\otimes I_{n}]\ex(\tau)\\
&=&0\,.
\end{eqnarray*}
Hence the pair $[\Abigr,\,\Bbigr]$ must be controllable because $\exr(0)$ was arbitrary.

{\em 3$\implies$2.} Suppose $[\Abigr,\,\Bbigr]$ is controllable.
Observe that we can write for all $k\in\{0,\,1,\,\ldots,\,n-1\}$
\begin{eqnarray*}
\Abigr^{k}\Bbigr
&=&[I_{q-1}\otimes A]^{k}[D^{*}\otimes I_{n}]\Bbig\\
&=&[I_{q-1}\otimes A^{k}][D^{*}\otimes I_{n}]\Bbig\\
&=&[D^{*}\otimes I_{n}][I_{q}\otimes A^{k}]\Bbig\\
&=&[D^{*}\otimes I_{n}][I_{q}\otimes A]^{k}\Bbig\\
&=&[D^{*}\otimes I_{n}]\Abig^{k}\Bbig
\end{eqnarray*}
yielding $\Wbigr=[D^{*}\otimes I_{n}]\Wbig$. Similarly, using $[S^{*}\otimes I_{n}]\Bbig=0$ we
can write
\begin{eqnarray*}
[S^{*}\otimes I_{n}]\Abig^{k}\Bbig
&=&[S^{*}\otimes I_{n}][I_{q}\otimes A^{k}]\Bbig\\
&=&A^{k}[S^{*}\otimes I_{n}]\Bbig\\
&=&0
\end{eqnarray*}
yielding $[S^{*}\otimes I_{n}]\Wbig=0$. Since $[\Abigr,\,\Bbigr]$ is controllable, $\Wbigr$ is
full column rank, which allows us to write ${\rm range}\,[D\otimes I_{n}]\Wbigr={\rm range}\,[D\otimes I_{n}]$. Let us now gather our recent findings and obtain
\begin{eqnarray*}
{\rm range}\,\Wbig
&=&{\rm range}\,[(DD^{*}+SS^{*})\otimes I_{n}]\Wbig\\
&=&{\rm range}\,\left([D\otimes I_{n}][D^{*}\otimes I_{n}]\Wbig+[S\otimes I_{n}][S^{*}\otimes I_{n}]\Wbig\right)\\
&=&{\rm range}\,[D\otimes I_{n}]\Wbigr\\
&=&{\rm range}\,[D\otimes I_{n}]\\
&=&\setS_{n}^{\perp}\,.
\end{eqnarray*}
Therefore ${\rm range}\,\Wbig\supset\setS_{n}^{\perp}$.

{\em 2$\implies$1.} Suppose ${\rm range}\,\Wbig\supset\setS_{n}^{\perp}$. Consider the system~\eqref{eqn:bigsystem}. Let $\ex(0)$ be an arbitrary initial condition. Then let
$\xi=[DD^{*}\otimes I_{n}]\ex(0)$. Note that $\xi\in\setS_{n}^{\perp}$. Therefore $\xi$ belongs to
the controllable subspace ${\rm range}\,\Wbig$ associated to the pair $[\Abig,\,\Bbig]$.
Consequently we can find an input signal $\yu:[0,\,\tau]\to\Real^{p}$ with finite $\tau>0$
that satisfies
\begin{eqnarray*}
e^{\Abig\tau}\xi+\int_{0}^{\tau}e^{\Abig(\tau-t)}\Bbig\yu(t)dt=0\,.
\end{eqnarray*}
Now using this control signal and the identity $D^{*}S=0$ we can write
\begin{eqnarray*}
[D^{*}\otimes I_{n}]\ex(\tau)
&=&[D^{*}\otimes I_{n}]\left(
e^{\Abig\tau}\ex(0)+\int_{0}^{\tau}e^{\Abig(\tau-t)}\Bbig\yu(t)dt\right)\\
&=&[D^{*}\otimes I_{n}]\left(
[(DD^{*}+SS^{*})\otimes e^{A\tau}]\ex(0)+\int_{0}^{\tau}e^{\Abig(\tau-t)}\Bbig\yu(t)dt\right)\\
&=&[D^{*}\otimes I_{n}]\left(
[DD^{*}\otimes e^{A\tau}]\ex(0)+\int_{0}^{\tau}e^{\Abig(\tau-t)}\Bbig\yu(t)dt\right)+
[D^{*}SS^{*}\otimes e^{A\tau}]\ex(0)\\
&=&[D^{*}\otimes I_{n}]\left(
[I_{q}\otimes e^{A\tau}][DD^{*}\otimes I_{n}]\ex(0)+\int_{0}^{\tau}e^{\Abig(\tau-t)}\Bbig\yu(t)dt\right)\\
&=&[D^{*}\otimes I_{n}]\left(
e^{\Abig\tau}\xi+\int_{0}^{\tau}e^{\Abig(\tau-t)}\Bbig\yu(t)dt\right)\\
&=&0\,.
\end{eqnarray*}
That is, $\|\ex(\tau)\|_{\setS_{n}}=0$.
The array $[A,\,(B_{::})]$ then has to be controllable because $\ex(0)$ was arbitrary.
\end{proof}

\begin{theorem}\label{thm:con}
The following are equivalent.
\begin{enumerate}
\item The array $[A,\,(B_{::})]$ is controllable. \item The
controllability matrix $\Wbig$ is connected. \item All the
matrices
${\rm inc}\,(V_{1}^{*}B_{::}),\,{\rm inc}\,(V_{2}^{*}B_{::}),\,\ldots,\,{\rm inc}\,(V_{m}^{*}B_{::})$
are connected.
\end{enumerate}
\end{theorem}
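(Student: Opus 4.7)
My plan splits into two pieces: $1 \Leftrightarrow 2$ is essentially a restatement, while $1 \Leftrightarrow 3$ requires a Popov-Belevitch-Hautus (PBH) argument applied to the reduced pair.

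For $1 \Leftrightarrow 2$, the work has already been done. Since $\Wbig \in \G_n$ (as already observed in the text), Definition~\ref{def:graph} reduces ``$\Wbig$ is connected'' to ${\rm range}\,\Wbig \supset \setS_n^\perp$, which is precisely the content of Lemma~\ref{lem:con}(2). So this direction requires nothing beyond citing the lemma and unwrapping the definition.

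For $1 \Leftrightarrow 3$, I would route through the equivalence with controllability of $[\Abigr,\,\Bbigr]$ provided by Lemma~\ref{lem:con} and then apply PBH. Since $\Abigr = I_{q-1}\otimes A$, its left eigenvectors for the eigenvalue $\mu_\kappa$ are exactly the vectors $[I_{q-1}\otimes V_\kappa]\xi$ with $\xi\in\Complex^{(q-1)d_\kappa}$. Using the mixed-product rule for Kronecker products, multiplying such a left eigenvector against $\Bbigr=[D^*\otimes I_n]\Bbig$ yields
\begin{eqnarray*}
\xi^*[I_{q-1}\otimes V_\kappa^*][D^*\otimes I_n]\Bbig = \xi^*[D^*\otimes I_{d_\kappa}]\,{\rm inc}\,(V_\kappa^*B_{::}).
\end{eqnarray*}
Hence PBH for $[\Abigr,\,\Bbigr]$ is the statement that $[D^*\otimes I_{d_\kappa}]\,{\rm inc}\,(V_\kappa^*B_{::})$ is of full row rank $(q-1)d_\kappa$ for every $\kappa$.

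The step that needs the most care is converting this full-row-rank condition into connectedness of ${\rm inc}\,(V_\kappa^*B_{::})$ in the sense of Definition~\ref{def:graph}. Two facts bridge the gap: (a) identity (iv) gives $[S^*\otimes I_{d_\kappa}]\,{\rm inc}\,(V_\kappa^*B_{::}) = V_\kappa^*[S^*\otimes I_n]\Bbig=0$, so ${\rm inc}\,(V_\kappa^*B_{::})\in\G_{d_\kappa}$ and its range lies in $\setS_{d_\kappa}^\perp$; and (b) by identities (ii)--(iii), $[D^*\otimes I_{d_\kappa}]$ restricts to a linear bijection from $\setS_{d_\kappa}^\perp$ onto $\Complex^{(q-1)d_\kappa}$. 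Together these imply that the PBH condition is equivalent to ${\rm range}\,{\rm inc}\,(V_\kappa^*B_{::})=\setS_{d_\kappa}^\perp$, which in turn is equivalent to the one-sided inclusion ${\rm range}\,{\rm inc}\,(V_\kappa^*B_{::})\supset\setS_{d_\kappa}^\perp$ used in Definition~\ref{def:graph} to define connectedness. The main obstacle, such as it is, is precisely this bookkeeping: without the $\G_{d_\kappa}$ membership in hand one would only obtain the weaker ${\rm range}\,{\rm inc}\,(V_\kappa^*B_{::})+\setS_{d_\kappa}=\Complex^{qd_\kappa}$, so it is the relative-actuation hypothesis~\eqref{eqn:relative} that makes PBH snap cleanly into a connectedness statement.
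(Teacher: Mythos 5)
Your proposal is correct and follows essentially the same route as the paper: both reduce to the pair $[\Abigr,\,\Bbigr]$ via Lemma~\ref{lem:con} and then invoke the PBH test together with the Kronecker commutation identities $[I_{q-1}\otimes V_{\kappa}^{*}][D^{*}\otimes I_{n}]=[D^{*}\otimes I_{d_{\kappa}}][I_{q}\otimes V_{\kappa}^{*}]$ and $[S^{*}\otimes I_{n}]\Bbig=0$. The only organizational difference is that the paper closes the cycle $2\implies 3\implies 1$, handling $2\implies 3$ by a direct null-space computation on $\Wbig$ rather than PBH, whereas you run the PBH equivalence in both directions at once; the mathematical content is the same.
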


\begin{proof}
{1 $\Longleftrightarrow$ 2.} By definition $\Wbig$ connected means ${\rm range}\,\Wbig\supset\setS_{n}^{\perp}$, which is equivalent to the controllability of the
array $[A,\,(B_{::})]$ by Lemma~\ref{lem:con}.

{\em 2$\implies$3.} Suppose for some
$\kappa\in\{1,\,2,\,\ldots,\,m\}$ the matrix ${\rm inc}\,(V_{\kappa}^{*}B_{::})$ is not connected. Recall ${\rm inc}\,(V_{\kappa}^{*}B_{::})=[I_{q}\otimes V_{\kappa}^{*}]\Bbig$. Now, ${\rm range}\,[I_{q}\otimes V_{\kappa}^{*}]\Bbig
\not\supset\setS_{d_{\kappa}}^{\perp}$ implies ${\rm null}\,\Bbig^{*}[I_{q}\otimes V_{\kappa}]\not\subset\setS_{d_{\kappa}}$. Therefore there exists a vector
$f\notin \setS_{d_{\kappa}}$ satisfying $\Bbig^{*}[I_{q}\otimes V_{\kappa}]f=0$. Define $\xi=[I_{q}\otimes V_{\kappa}]f$. We have $\xi\notin\setS_{n}$ because $f\notin \setS_{d_{\kappa}}$ and $V_{\kappa}$
is full column rank. Recall $A^{*}V_{\kappa}=\mu_{\kappa}V_{\kappa}$. This allows us
to write $\Abig^{*}[I_{q}\otimes V_{\kappa}]=\mu_{\kappa}[I_{q}\otimes V_{\kappa}]$. We can
now proceed as
\begin{eqnarray*}
\Wbig^{*}\xi=\left[\begin{array}{c}
\Bbig^{*}\\
\Bbig^{*}\Abig^{*}\\
\vdots\\
\Bbig^{*}\Abig^{(n-1)*}
\end{array}
\right][I_{q}\otimes V_{\kappa}]f=
\left[\begin{array}{c}
\Bbig^{*}[I_{q}\otimes V_{\kappa}]f\\
\mu_{\kappa}\Bbig^{*}[I_{q}\otimes V_{\kappa}]f\\
\vdots\\
\mu_{\kappa}^{n-1}\Bbig^{*}[I_{q}\otimes V_{\kappa}]f
\end{array}
\right]=0\,.
\end{eqnarray*}
Since $\xi\notin\setS_{n}$ we deduce ${\rm null}\,\Wbig^{*}\not\subset\setS_{n}$. Hence
${\rm range}\,\Wbig\not\supset\setS_{n}^{\perp}$, i.e., $\Wbig$ is not connected.

{\em 3$\implies$1.} Suppose the array $[A,\,(B_{::})]$ is not controllable. Then by Lemma~\ref{lem:con}
the pair $[\Abigr,\,\Bbigr]$ is not controllable. Thanks to PBH test
this implies that there exists an eigenvector $\eta\in(\Complex^{n})^{q-1}$ of
$\Abigr^{*}$ satisfying $\Bbigr^{*}\eta=0$. Note that $\Abigr^{*}$ and $A^{*}$ share the same eigenvalues. Therefore for some $\kappa\in\{1,\,2,\,\ldots,\,m\}$ we have $[{\mathbf A}_{\rm r}^{*}-\mu_{\kappa} I_{(q-1)n}]\eta=0$. Since ${\rm null}\,[\Abigr^{*}-\mu_{\kappa}I_{(q-1)n}]={\rm range}\,[I_{q-1}\otimes V_{\kappa}]$ there exists $h\in(\Complex^{d_{\kappa}})^{q-1}$
satisfying $[I_{q-1}\otimes V_{\kappa}]h=\eta$. Note that
$\eta$ is nonzero because it is an eigenvector. Therefore $h\neq 0$.
Now define $g=[D\otimes I_{d_{\kappa}}]h$. We have $g\neq 0$ because $h$ is nonzero and $[D\otimes I_{d_{\kappa}}]$
is full column rank. Then the inclusion $g\in{\rm range}\,[D\otimes I_{d_{\kappa}}]=\setS_{d_{\kappa}}^{\perp}$
allows us to write $g\notin\setS_{d_{\kappa}}$. Moreover,
\begin{eqnarray*}
\Bbig^{*}[I_{q}\otimes V_{\kappa}]g
&=&\Bbig^{*}[I_{q}\otimes V_{\kappa}][D\otimes I_{d_{\kappa}}]h\\
&=&\Bbig^{*}[D\otimes I_{n}][I_{q-1}\otimes V_{\kappa}]h\\
&=&\Bbigr^{*}\eta\\
&=&0\,.
\end{eqnarray*}
Hence ${\rm null}\,\Bbig^{*}[I_{q}\otimes V_{\kappa}]\not\subset\setS_{d_{\kappa}}$, yielding
${\rm range}\,[I_{q}\otimes V_{\kappa}^{*}]\Bbig\not\supset\setS_{d_{\kappa}}^{\perp}$, i.e.,
${\rm inc}\,(V_{\kappa}^{*}B_{::})$ is not connected.
\end{proof}
\vspace{0.12in}

{\em An example.} Using Theorem~\ref{thm:con} let us now study controllability of each of the two arrays of electrical oscillators shown in Fig.~\ref{fig:arrayscon}, where all inductances are 1H and all capacitances are 1F.

\begin{figure}[h]
\begin{center}
\includegraphics[scale=0.29]{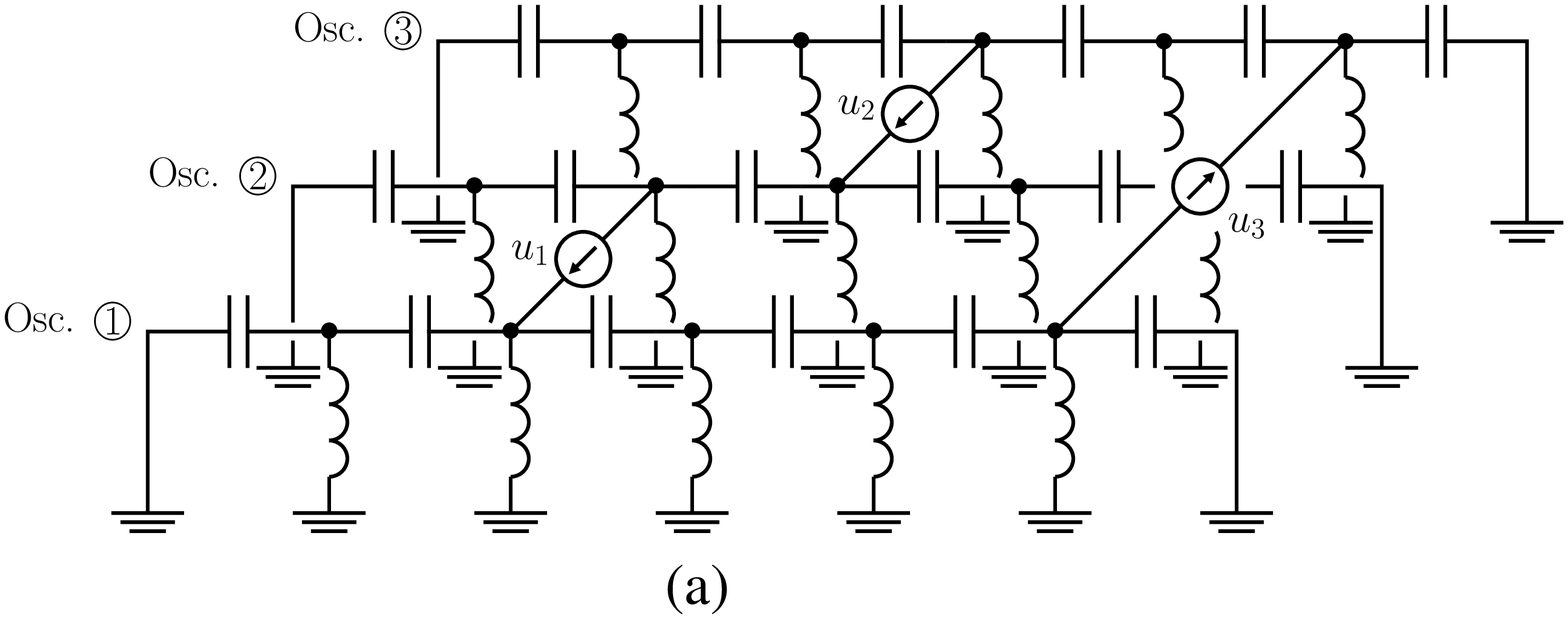}\includegraphics[scale=0.29]{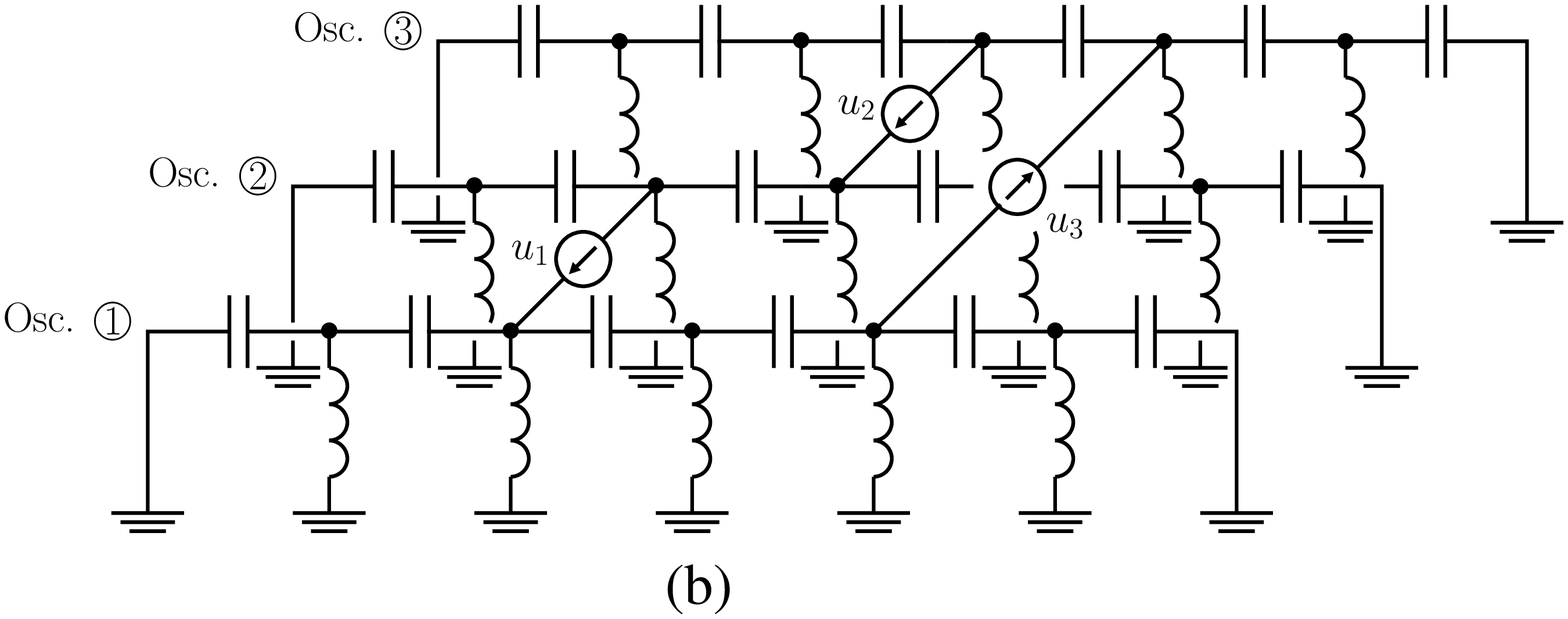}
\caption{Arrays of electrical oscillators.}
\label{fig:arrayscon}
\end{center}
\end{figure}

Let, for the $i$th oscillator, $y_{i}\in\Real^{5}$ be the vector of inductor currents and $v_{i}\in\Real^{5}$ be the vector of node voltages. We can then write
\begin{eqnarray*}
\begin{array}{rcl}
C{\dot v}_{1}+y_{1}&=&e_{2}u_{1}-e_{k}u_{3}\,,\\
C{\dot v}_{2}+y_{2}&=&e_{3}u_{2}-e_{2}u_{1}\,,\\
C{\dot v}_{3}+y_{3}&=&e_{k}u_{3}-e_{3}u_{2}\,,
\end{array}
\begin{array}{rcl}
L{\dot y}_{1}&=&v_{1}\\
L{\dot y}_{2}&=&v_{2}\\
L{\dot y}_{3}&=&v_{3}
\end{array}
\end{eqnarray*}
where $e_{2},\,e_{3},\,e_{k}\in\Real^{5}$, $e_{k}=e_{5}$ for the array in Fig.~\ref{fig:arrayscon}a, $e_{k}=e_{4}$ for the array in Fig.~\ref{fig:arrayscon}b, $L=I_{5}$, and
\begin{eqnarray*}
C=\left[\begin{array}{rrrrr}
2&-1&0&0&0\\
-1&2&-1&0&0\\
0&-1&2&-1&0\\
0&0&-1&2&-1\\
0&0&0&-1&2
\end{array}\right]\,.
\end{eqnarray*}
Now, defining the state of the $i$th oscillator as $x_{i}=[v_{i}^{*}\ y_{i}^{*}]^{*}$ we can rewrite the oscillator dynamics as
\begin{eqnarray*}
\left[\begin{array}{c}
{\dot x}_{1}\\
{\dot x}_{2}\\
{\dot x}_{3}
\end{array}\right]=
\left[\begin{array}{c}
Ax_{1}+B_{11}u_{1}-B_{33}u_{3}\\
Ax_{2}+B_{22}u_{2}-B_{11}u_{1}\\
Ax_{3}+B_{33}u_{3}-B_{22}u_{2}
\end{array}\right]=[I_{3}\otimes A]\left[\begin{array}{c}
{x}_{1}\\
{x}_{2}\\
{x}_{3}
\end{array}\right]+\underbrace{\left[\begin{array}{rrr}
B_{11}&0\ \ &-B_{33}\\
-B_{11}&B_{22}&0\ \ \\
0\ \ &-B_{22}&B_{33}
\end{array}\right]}_{\displaystyle{\rm inc}\,(B_{::})}\left[\begin{array}{c}
{u}_{1}\\
{u}_{2}\\
{u}_{3}
\end{array}\right]
\end{eqnarray*}
with
\begin{eqnarray*}
A=\left[\begin{array}{cc}
0& -C^{-1}\\
L^{-1}& 0
\end{array}\right]\,,\
B_{11}=\left[\begin{array}{c}
C^{-1}e_{2}\\
0
\end{array}\right]\,,\
B_{22}=\left[\begin{array}{c}
C^{-1}e_{3}\\
0
\end{array}\right]\,,\
B_{33}=\left[\begin{array}{c}
C^{-1}e_{k}\\
0
\end{array}\right]\,.
\end{eqnarray*}
The matrix $A^{*}\in\Real^{10\times 10}$ has five conjugate pairs of eigenvalues: $\mu_{1,2}=\pm j\sqrt{\tan(5\pi/12)}$, $\mu_{3,4}=\pm j1$, $\mu_{5,6}=\pm j\sqrt{1/2}$,
$\mu_{7,8}=\pm j\sqrt{1/3}$, $\mu_{9,10}=\pm j\sqrt{\tan(\pi/12)}$. Each eigenvalue $\mu_{\kappa}$
admits an eigenvector $V_{\kappa}\in\Complex^{10}$ and each eigenvector generates a class-$\G_{1}$
matrix ${\rm inc}\,(V_{\kappa}^{*}B_{::})\in\Complex^{3\times 3}$. Now, each ${\rm inc}\,(V_{\kappa}^{*}B_{::})$ is a (weighted\footnote{That is, each column is of the form $w_{ij}(e_{i}-e_{j})$ with $w_{ij}\in\Complex$.}) incidence matrix of a $3$-vertex graph, which is connected when the matrix ${\rm inc}\,(V_{\kappa}^{*}B_{::})$ is connected. These graphs for
each of the arrays in Fig.~\ref{fig:arrayscon} are given in Table~\ref{table:one}. Observe that all the graphs of the array in Fig.~\ref{fig:arrayscon}a are connected; whereas, for the array in Fig.~\ref{fig:arrayscon}b, the graph corresponding to the eigenvalue pair $\mu_{5,6}=\pm j\sqrt{1/2}$ is not connected. By Theorem~\ref{thm:con} therefore the array in Fig.~\ref{fig:arrayscon}a is controllable, but the array in Fig.~\ref{fig:arrayscon}b is not.

\renewcommand{\arraystretch}{1.5}
\begin{table}\caption{The graphs associated to the arrays in Fig.~\ref{fig:arrayscon}.}
\begin{center}
  \begin{tabular}{|c|c|c|c|c|c|}
    \hline
    Array  & ${\rm inc}\,(V_{1,2}^{*}B_{::})$ & ${\rm inc}\,(V_{3,4}^{*}B_{::})$ & ${\rm inc}\,(V_{5,6}^{*}B_{::})$ & ${\rm inc}\,(V_{7,8}^{*}B_{::})$ & ${\rm inc}\,(V_{9,10}^{*}B_{::})$\\ \hline
    \begin{tabular}{c}(a)\vspace{0.08in}\end{tabular} & \begin{tabular}{c}\includegraphics[scale=0.5]{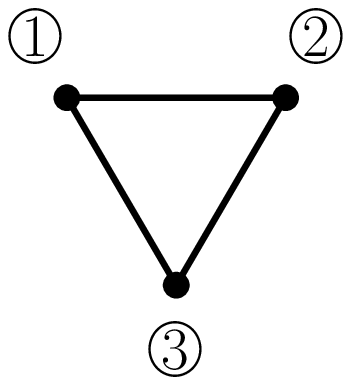}\vspace{-0.06in}\end{tabular} &
    \begin{tabular}{c}\includegraphics[scale=0.5]{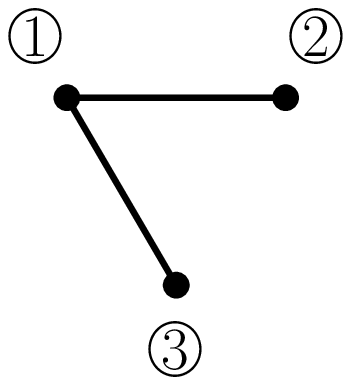}\vspace{-0.06in}\end{tabular} &
    \begin{tabular}{c}\includegraphics[scale=0.5]{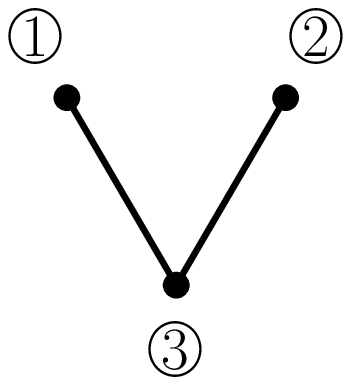}\vspace{-0.06in}\end{tabular} &
    \begin{tabular}{c}\includegraphics[scale=0.5]{graph2.eps}\vspace{-0.06in}\end{tabular} &
    \begin{tabular}{c}\includegraphics[scale=0.5]{graph1.eps}\vspace{-0.06in}\end{tabular} \\ \hline
    \begin{tabular}{c}(b)\vspace{0.08in}\end{tabular} & \begin{tabular}{c}\includegraphics[scale=0.5]{graph1.eps}\vspace{-0.06in}\end{tabular} &
    \begin{tabular}{c}\includegraphics[scale=0.5]{graph2.eps}\vspace{-0.06in}\end{tabular} &
    \begin{tabular}{c}\includegraphics[scale=0.5]{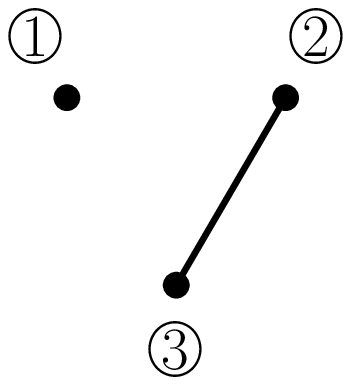}\vspace{-0.06in}\end{tabular} &
    \begin{tabular}{c}\includegraphics[scale=0.5]{graph2.eps}\vspace{-0.06in}\end{tabular} &
    \begin{tabular}{c}\includegraphics[scale=0.5]{graph1.eps}\vspace{-0.06in}\end{tabular} \\ \hline
  \end{tabular}
  \label{table:one}
\end{center}
\end{table}
\renewcommand{\arraystretch}{1}

\section{Positive controllability}\label{sec:poscontrol}

Recall that $\Abigr^{*}$ and $A^{*}$ share the same eigenvalues and ${\rm null}\,[\Abigr^{*}-\mu_{\kappa}I_{(q-1)n}]={\rm range}\,[I_{q-1}\otimes V_{\kappa}]$ for $\kappa=1,\,2,\,\ldots,\,m$. The below result is \cite[Cor.~1]{lindmark16} expressed in
our notation.

\begin{proposition}\label{prop:lindmark}
Consider the system ${\dot\ex}_{\rm r}=\Abigr\exr+\Bbigr\yur$. Suppose for each initial condition
$\exr(0)$ there exists an input signal $\yur:[0,\,\tau]\to\Real_{\geq 0}^{p}$ with some finite $\tau>0$
that achieves $\exr(\tau)=0$.
Then and only then the following two conditions simultaneously hold.
\begin{enumerate}
\item The pair $[\Abigr,\,\Bbigr]$ is controllable.
\item ${\rm cone}\,[I_{q-1}\otimes V_{\kappa}^{*}]\Bbigr=(\Real^{d_{\kappa}})^{q-1}$ for all $\mu_{\kappa}\in\Real$.
\end{enumerate}
\end{proposition}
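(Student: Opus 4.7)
The plan is to derive this statement from Brammer's classical eigenvector test for positive null-controllability. Brammer's theorem says: for the system $\dot{x}=Ax+Bu$ with $u(t)\in\Real_{\geq 0}^{p}$, every initial state can be driven to the origin in finite time if and only if (i) the pair $(A,B)$ is controllable, and (ii) no real eigenvector $v$ of $A^{*}$ satisfies $v^{*}B\leq 0$ componentwise. Applied to $(\Abigr,\Bbigr)$, condition (i) is exactly item~1 of the proposition, so the remaining task is to match (ii) with item~2.

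To translate (ii), I would exploit the Kronecker structure $\Abigr^{*}=I_{q-1}\otimes A^{*}$. The real eigenvalues of $\Abigr^{*}$ are precisely those $\mu_{\kappa}$ that are real, and the corresponding eigenspace equals ${\rm range}\,[I_{q-1}\otimes V_{\kappa}]$, a real subspace since $V_{\kappa}$ is chosen real whenever $\mu_{\kappa}$ is. Hence every real eigenvector of $\Abigr^{*}$ has the form $\eta=[I_{q-1}\otimes V_{\kappa}]f$ for some nonzero $f\in(\Real^{d_{\kappa}})^{q-1}$, which gives
\begin{eqnarray*}
\eta^{*}\Bbigr=f^{*}[I_{q-1}\otimes V_{\kappa}^{*}]\Bbigr.
\end{eqnarray*}
Thus the forbidden inequality $\eta^{*}\Bbigr\leq 0$ reduces to $\bigl([I_{q-1}\otimes V_{\kappa}^{*}]\Bbigr\bigr)^{*}f\leq 0$ for some nonzero real $f$.

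The last ingredient is a Gordan/Stiemke-style theorem of alternatives: for any real matrix $M\in\Real^{d\times p}$, one has ${\rm cone}\,M=\Real^{d}$ if and only if no nonzero $f\in\Real^{d}$ satisfies $M^{*}f\leq 0$. The nontrivial direction is the separating hyperplane theorem applied to the closed convex cone ${\rm cone}\,M$: if this cone is a proper subset of $\Real^{d}$, some nonzero $f$ separates it from the complement, giving $f^{*}(M\alpha)\leq 0$ for all $\alpha\geq 0$ and hence $M^{*}f\leq 0$. Taking $M=[I_{q-1}\otimes V_{\kappa}^{*}]\Bbigr$ for each real $\mu_{\kappa}$ shows that Brammer's condition (ii) is equivalent to item~2 of the proposition, closing the equivalence.

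I expect the main obstacle to be nothing deeper than the alternatives lemma above: both the invocation of Brammer's test and the Kronecker manipulation are entirely routine, and the whole content of the corollary lies in recognizing that the positive-orthant specialization of Brammer's eigenvector condition admits this clean cone-span reformulation on the real eigenspaces of $\Abigr^{*}$—which is precisely the form needed to match Definition~\ref{def:graph} of strong connectivity in the sequel.
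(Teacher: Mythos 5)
Your proof is correct and is essentially the paper's own route: the paper supplies no proof of this proposition at all, presenting it as \cite[Cor.~1]{lindmark16}, and that corollary is obtained from Brammer's eigenvector test by precisely the specialization you carry out --- restriction of the eigenvector condition to the real eigenspaces ${\rm range}\,[I_{q-1}\otimes V_{\kappa}]$ of $\Abigr^{*}$ followed by a Gordan-type theorem of the alternative (valid because a finitely generated cone is closed). The only steps you leave implicit are that closedness and the time-reversal observation that, for cone-valued inputs, Brammer's reachability conditions coincide with the null-controllability conditions; both are routine.
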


A pair $[\Abigr,\,\Bbigr]$ is said to be {\em positively controllable} if it satisfies the conditions in Proposition~\ref{prop:lindmark}.

\begin{lemma}\label{lem:yin}
The following are equivalent.
\begin{enumerate}
\item The array $[A,\,(B_{::})]$ is positively controllable.
\item The pair $[\Abigr,\,\Bbigr]$ is positively controllable.
\end{enumerate}
\end{lemma}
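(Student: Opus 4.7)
The plan is to mimic the structure of Lemma~\ref{lem:con} essentially verbatim, observing that the reduction from the big pair $[\Abig,\,\Bbig]$ to the reduced pair $[\Abigr,\,\Bbigr]$ acts only on the state side (via the projection $[D^{*}\otimes I_{n}]$) and leaves the input $\yu$ untouched. Since the two systems can be driven by literally the same input signal in both directions of the equivalence, any nonnegativity constraint on $\yu$ transfers at no cost.

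For the direction $1\Rightarrow 2$, I would pick an arbitrary $\exr(0)$, lift it to the big system by setting $\ex(0)=[D\otimes I_{n}]\exr(0)$, and invoke positive controllability of the array to produce a signal $\yu:[0,\tau]\to\Real_{\geq 0}^{p}$ driving $\ex$ into $\setS_{n}$ at time $\tau$, i.e.\ $[D^{*}\otimes I_{n}]\ex(\tau)=0$. Defining $\yur=\yu$ (still nonnegative), the exact chain of identities used in the $1\Rightarrow 3$ step of Lemma~\ref{lem:con} (using $D^{*}D=I_{q-1}$, the Kronecker factorization of $e^{\Abig\tau}$, and the identity $[D^{*}\otimes I_{n}]\Bbig=\Bbigr$) shows $\exr(\tau)=[D^{*}\otimes I_{n}]\ex(\tau)=0$. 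Arbitrariness of $\exr(0)$ gives positive controllability of $[\Abigr,\,\Bbigr]$.

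For the direction $2\Rightarrow 1$, I would start with an arbitrary $\ex(0)$ and project: let $\exr(0)=[D^{*}\otimes I_{n}]\ex(0)$. Positive controllability of $[\Abigr,\,\Bbigr]$ yields a nonnegative signal $\yur$ with $\exr(\tau)=0$. Taking $\yu=\yur\geq 0$ and propagating $\ex$ under the big dynamics, the same Kronecker manipulations give
\begin{eqnarray*}
[D^{*}\otimes I_{n}]\ex(\tau)
&=&[I_{q-1}\otimes e^{A\tau}][D^{*}\otimes I_{n}]\ex(0)+\int_{0}^{\tau}[I_{q-1}\otimes e^{A(\tau-t)}][D^{*}\otimes I_{n}]\Bbig\yu(t)dt\\
&=&e^{\Abigr\tau}\exr(0)+\int_{0}^{\tau}e^{\Abigr(\tau-t)}\Bbigr\yur(t)dt
\;=\;\exr(\tau)\;=\;0,
\end{eqnarray*}
so $\|\ex(\tau)\|_{\setS_{n}}=0$, i.e.\ all $x_{i}(\tau)$ coincide.

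I do not expect a real obstacle here: the whole point of the reduction $[D^{*}\otimes I_{n}]$ is that it commutes with the system matrix in the precise sense needed to identify the disagreement component of $\ex$ with $\exr$, and this identification is carried out in Lemma~\ref{lem:con} already. The only thing to double-check is that the input $\yu$ is never transformed (e.g.\ by a sign-changing similarity), so nonnegativity on one side automatically gives nonnegativity on the other; this is immediate from the formula $\Bbigr=[D^{*}\otimes I_{n}]\Bbig$, which does not alter the input coordinates.
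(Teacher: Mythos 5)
Your proposal is correct and follows essentially the same route as the paper: both directions lift/project the initial condition via $[D\otimes I_{n}]$ and $[D^{*}\otimes I_{n}]$, reuse the identical (nonnegative) input signal on the other system, and invoke the Kronecker identities already established in Lemma~\ref{lem:con} to identify $\exr(\tau)$ with $[D^{*}\otimes I_{n}]\ex(\tau)$. The observation that the input is never transformed, so nonnegativity transfers for free, is exactly the point the paper's proof exploits.
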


\begin{proof}
{\em 1$\implies$2.} Suppose $[A,\,(B_{::})]$ is positively controllable. For the system~\eqref{eqn:bigsystem} this means each initial condition $\ex(0)$ can be driven to
$\setS_{n}$ in finite time with some nonnegative input signal. Consider the system ${\dot \ex}_{\rm r}=\Abigr\exr+\Bbigr\yur$.
Choose an arbitrary initial condition $\exr(0)\in(\Real^n)^{q-1}$. Set the initial condition
of the system~\eqref{eqn:bigsystem} as $\ex(0)=[D\otimes I_{n}]\exr(0)$. Then let $\yu:[0,\,\tau]\to\Real_{\geq 0}^{p}$ be
an input signal that yields $\|\ex(\tau)\|_{\setS_{n}}=0$ for some finite $\tau>0$.
In the proof of Lemma~\ref{lem:con} we discovered that the input signal $\yur(t)=\yu(t)$ for $t\in[0,\,\tau]$ renders $\exr(\tau)=0$. Hence the pair $[\Abigr,\,\Bbigr]$ must be positively controllable because $\exr(0)$ was arbitrary.

{\em 2$\implies$1.} Suppose $[\Abigr,\,\Bbigr]$ is positively controllable. Consider the system~\eqref{eqn:bigsystem}. Let $\ex(0)$ be an arbitrary initial condition. Then let
$\exr(0)=[D^{*}\otimes I_{n}]\ex(0)$ be the initial condition for the system ${\dot \ex}_{\rm r}=\Abigr\exr+\Bbigr\yur$. Thanks to positive controllability of $[\Abigr,\,\Bbigr]$ we can find an input signal $\yur:[0,\,\tau]\to\Real_{\geq 0}^{p}$ with finite $\tau>0$
that satisfies
\begin{eqnarray*}
e^{\Abigr\tau}\exr(0)+\int_{0}^{\tau}e^{\Abigr(\tau-t)}\Bbigr\yur(t)dt=0\,.
\end{eqnarray*}
Using this control signal to drive the system~\eqref{eqn:bigsystem}, i.e., $\yu(t)=\yur(t)$ for $t\in[0,\,\tau]$, we can write
\begin{eqnarray*}
[D^{*}\otimes I_{n}]\ex(\tau)
&=&[D^{*}\otimes I_{n}]\left(
e^{\Abig\tau}\ex(0)+\int_{0}^{\tau}e^{\Abig(\tau-t)}\Bbig\yu(t)dt\right)\\
&=&[D^{*}\otimes I_{n}]\left(
[I_{q}\otimes e^{A\tau}]\ex(0)+\int_{0}^{\tau}[I_{q}\otimes e^{A(\tau-t)}]\Bbig\yu(t)dt\right)\\
&=&[I_{q-1}\otimes e^{A\tau}][D^{*}\otimes I_{n}]\ex(0)+\int_{0}^{\tau}[I_{q-1}\otimes e^{A(\tau-t)}][D^{*}\otimes I_{n}]\Bbig\yu(t)dt\\
&=&e^{\Abigr\tau}\exr(0)+\int_{0}^{\tau}e^{\Abigr(\tau-t)}\Bbigr\yur(t)dt\\
&=&0\,.
\end{eqnarray*}
That is, $\|\ex(\tau)\|_{\setS_{n}}=0$.
Hence $[A,\,(B_{::})]$ has to be positively controllable because $\ex(0)$ was arbitrary.
\end{proof}

\begin{lemma}\label{lem:yang}
Let $\mu_{\kappa}\in\Real$. The following are equivalent.
\begin{enumerate}
\item ${\rm inc}\,(V_{\kappa}^{*}B_{::})$
is strongly connected.
\item ${\rm cone}\,[I_{q-1}\otimes V_{\kappa}^{*}]\Bbigr=(\Real^{d_{\kappa}})^{q-1}$.
\end{enumerate}
\end{lemma}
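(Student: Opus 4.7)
The plan is to push the problem through a linear change of coordinates that exploits the Kronecker product identity $[A \otimes B][C \otimes D] = [AC \otimes BD]$. Denote $H = [I_q \otimes V_\kappa^*]\Bbig = {\rm inc}\,(V_\kappa^* B_{::})$, which is a real matrix since $\mu_\kappa \in \Real$ implies $V_\kappa$ is real. Writing $\Bbigr = [D^*\otimes I_n]\Bbig$ and using the mixed-product identity one obtains
\begin{equation*}
[I_{q-1}\otimes V_\kappa^*]\Bbigr = [I_{q-1}\otimes V_\kappa^*][D^*\otimes I_n]\Bbig = [D^*\otimes I_{d_\kappa}][I_q\otimes V_\kappa^*]\Bbig = [D^*\otimes I_{d_\kappa}]H.
\end{equation*}
So the statement reduces to showing that ${\rm cone}\,H \supset \setS_{d_\kappa}^\perp$ (which is what ``$H$ strongly connected'' means according to Definition~\ref{def:graph}) is equivalent to ${\rm cone}\,[D^*\otimes I_{d_\kappa}]H = (\Real^{d_\kappa})^{q-1}$.

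The first step is to verify that $H \in \G_{d_\kappa}$, i.e., ${\rm range}\,H \subset \setS_{d_\kappa}^\perp$. This follows from $\Bbig \in \G_n$ (hence $[S^*\otimes I_n]\Bbig = 0$) and the identity $[S^*\otimes I_{d_\kappa}]H = [S^*\otimes V_\kappa^*]\Bbig = V_\kappa^*[S^*\otimes I_n]\Bbig = 0$. The second step is to observe that $[D^*\otimes I_{d_\kappa}]$ restricted to $\setS_{d_\kappa}^\perp = {\rm range}\,[D\otimes I_{d_\kappa}]$ is a linear bijection onto $(\Real^{d_\kappa})^{q-1}$, with inverse $[D\otimes I_{d_\kappa}]$ (since $D^*D = I_{q-1}$ and $DD^* + SS^* = I_q$). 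In particular, this linear bijection sends cones to cones.

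Since the columns of $H$ already lie in $\setS_{d_\kappa}^\perp$, one has ${\rm cone}\,H \subset \setS_{d_\kappa}^\perp$ automatically, so condition~1 is the equality ${\rm cone}\,H = \setS_{d_\kappa}^\perp$. Applying the bijection $[D^*\otimes I_{d_\kappa}]$ to both sides yields ${\rm cone}\,[D^*\otimes I_{d_\kappa}]H = (\Real^{d_\kappa})^{q-1}$, which is condition~2. Conversely, if condition~2 holds, then applying the inverse $[D\otimes I_{d_\kappa}]$ gives ${\rm cone}\,[DD^*\otimes I_{d_\kappa}]H = \setS_{d_\kappa}^\perp$; but $[DD^*\otimes I_{d_\kappa}]H = [(I_q - SS^*)\otimes I_{d_\kappa}]H = H$ since $[SS^*\otimes I_{d_\kappa}]H = 0$ by step one, recovering condition~1.

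No step is really the main obstacle here — the proof is essentially bookkeeping with Kronecker identities once one notices the right change of coordinates. The only place that requires care is making sure that the cone inclusion and the class-$\G_{d_\kappa}$ property fit together so that the restriction of $[D^*\otimes I_{d_\kappa}]$ to $\setS_{d_\kappa}^\perp$ is genuinely a bijection, not merely surjective.
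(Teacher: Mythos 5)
Your proof is correct and rests on exactly the same ingredients as the paper's: the Kronecker identity $[I_{q-1}\otimes V_\kappa^{*}]\Bbigr=[D^{*}\otimes I_{d_\kappa}][I_q\otimes V_\kappa^{*}]\Bbig$, the fact that $[D^{*}\otimes I_{d_\kappa}]$ and $[D\otimes I_{d_\kappa}]$ are mutually inverse between $\setS_{d_\kappa}^{\perp}$ and $(\Real^{d_\kappa})^{q-1}$, and the cancellation $[SS^{*}\otimes I_{d_\kappa}]H=0$ coming from \eqref{eqn:relative}. The paper carries out the same computation by chasing an arbitrary element of each cone rather than phrasing it as a bijection of cones, so the two arguments are essentially identical.
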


\begin{proof}
{\em 1$\implies$2.} Suppose ${\rm inc}\,(V_{\kappa}^{*}B_{::})$
is strongly connected. That is, ${\rm cone}\,[I_{q}\otimes V_{\kappa}^{*}]\Bbig\supset\setS_{d_{\kappa}}^{\perp}$. Let $g\in(\Real^{d_{\kappa}})^{q-1}$
be arbitrary. Define $f=[D\otimes I_{d_{\kappa}}]g$. Note that $f\in\setS_{d_{\kappa}}^{\perp}$.
Hence we can find a nonnegative vector $\alpha\in\Real_{\geq 0}^{p}$ satisfying $[I_{q}\otimes V_{\kappa}^{*}]\Bbig\alpha=f$. We can now write
\begin{eqnarray*}
[I_{q-1}\otimes V_{\kappa}^{*}]\Bbigr\alpha
&=&[I_{q-1}\otimes V_{\kappa}^{*}][D^{*}\otimes I_{n}]\Bbig\alpha\\
&=&[D^{*}\otimes I_{d_{\kappa}}][I_{q}\otimes V_{\kappa}^{*}]\Bbig\alpha\\
&=&[D^{*}\otimes I_{d_{\kappa}}]f\\
&=&[D^{*}\otimes I_{d_{\kappa}}][D\otimes I_{d_{\kappa}}]g\\
&=&[D^{*}D\otimes I_{d_{\kappa}}]g\\
&=&g\,.
\end{eqnarray*}
Hence $g\in{\rm cone}\,[I_{q-1}\otimes V_{\kappa}^{*}]\Bbigr$. Since $g$ was arbitrary we have ${\rm cone}\,[I_{q-1}\otimes V_{\kappa}^{*}]\Bbigr=(\Real^{d_{\kappa}})^{q-1}$.

{\em 2$\implies$1.} Suppose ${\rm cone}\,[I_{q-1}\otimes V_{\kappa}^{*}]\Bbigr=(\Real^{d_{\kappa}})^{q-1}$. Choose an arbitrary vector $h$ belonging to $\in\setS_{d_{\kappa}}^{\perp}$. Since $h\in\setS_{d_{\kappa}}^{\perp}$ we can find $b\in(\Real^{d_{\kappa}})^{q-1}$ satisfying $[D\otimes I_{d_{\kappa}}]b=h$. Then we can find
a nonnegative vector $\beta\in\Real_{\geq 0}^{p}$ satisfying $[I_{q-1}\otimes V_{\kappa}^{*}]\Bbigr\beta=b$. We can now write (recalling $[S^{*}\otimes I_{n}]\Bbig=0$)
\begin{eqnarray*}
[I_{q}\otimes V_{\kappa}^{*}]\Bbig\beta
&=&[I_{q}\otimes V_{\kappa}^{*}][(DD^{*}+SS^{*})\otimes I_{n}]\Bbig\beta\\
&=&[I_{q}\otimes V_{\kappa}^{*}][DD^{*}\otimes I_{n}]\Bbig\beta\\
&=&[D\otimes I_{d_{\kappa}}][I_{q-1}\otimes V_{\kappa}^{*}][D^{*}\otimes I_{n}]\Bbig\beta\\
&=&[D\otimes I_{d_{\kappa}}][I_{q-1}\otimes V_{\kappa}^{*}]\Bbigr\beta\\
&=&[D\otimes I_{d_{\kappa}}]b\\
&=&h\,.
\end{eqnarray*}
This shows that ${\rm cone}\,[I_{q}\otimes V_{\kappa}^{*}]\Bbig\supset\setS_{d_{\kappa}}^{\perp}$.
\end{proof}
\vspace{0.12in}

Proposition~\ref{prop:lindmark}, Lemma~\ref{lem:con}, Lemma~\ref{lem:yin}, and Lemma~\ref{lem:yang} yield:

\begin{theorem}\label{thm:poscon}
The array $[A,\,(B_{::})]$ is positively controllable if and only if the following two conditions simultaneously hold.
\begin{enumerate}
\item The array $[A,\,(B_{::})]$ is controllable.
\item ${\rm inc}\,(V_{\kappa}^{*}B_{::})$
is strongly connected for all $\mu_{\kappa}\in\Real$.
\end{enumerate}
\end{theorem}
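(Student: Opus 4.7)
The plan is to chain the four supporting results together into a sequence of biconditionals, with the intermediate object being the reduced pair $[\Abigr,\Bbigr]$. Since Lemma~\ref{lem:yin} already identifies positive controllability of the array with positive controllability of $[\Abigr,\Bbigr]$, and Proposition~\ref{prop:lindmark} is stated precisely as the definition of positive controllability for $[\Abigr,\Bbigr]$, the theorem reduces to translating the two clauses of Proposition~\ref{prop:lindmark} into the language of the array and its associated graphs.

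First I would apply Lemma~\ref{lem:yin} to replace ``the array $[A,(B_{::})]$ is positively controllable'' by ``the pair $[\Abigr,\Bbigr]$ is positively controllable.'' Then by the definition immediately following Proposition~\ref{prop:lindmark}, this is equivalent to the simultaneous validity of the two conditions in that proposition: namely (i) the pair $[\Abigr,\Bbigr]$ is controllable and (ii) ${\rm cone}\,[I_{q-1}\otimes V_{\kappa}^{*}]\Bbigr=(\Real^{d_{\kappa}})^{q-1}$ for every real eigenvalue $\mu_{\kappa}$ of $A^{*}$.

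Next I would rewrite each of (i) and (ii) in the desired form. Condition (i) is handled by Lemma~\ref{lem:con}, whose equivalence 1~$\Leftrightarrow$~3 turns ``$[\Abigr,\Bbigr]$ controllable'' into ``the array $[A,(B_{::})]$ is controllable,'' yielding condition~1 of the theorem. Condition (ii) is handled by Lemma~\ref{lem:yang}, whose equivalence 1~$\Leftrightarrow$~2 turns the cone identity into ``${\rm inc}\,(V_{\kappa}^{*}B_{::})$ is strongly connected,'' for each real $\mu_{\kappa}$. Since the quantifier over real eigenvalues is the same on both sides of Lemma~\ref{lem:yang}, this translation is clean.

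There is no real obstacle here; the proof is a bookkeeping exercise that assembles the four previously proved statements. The only thing worth being careful about is matching the quantifier ``for all $\mu_{\kappa}\in\Real$'' on both sides (so that one does not silently lose or gain eigenvalues), and noting that the real-eigenvector matrices $V_{\kappa}$ were taken to be real by convention, which is what makes the cone condition and the strong-connectedness condition genuinely over $\Real$. After that, the conjunction of conditions~1 and~2 of the theorem follows, completing the ``if and only if.''
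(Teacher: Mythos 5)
Your proposal is correct and matches the paper exactly: the paper offers no separate argument for Theorem~\ref{thm:poscon}, stating only that it follows from Proposition~\ref{prop:lindmark}, Lemma~\ref{lem:con}, Lemma~\ref{lem:yin}, and Lemma~\ref{lem:yang}, which is precisely the chain of equivalences you assemble. Nothing is missing.
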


\section{Pairwise controllability}\label{sec:paircon}

Let the integers $n_{1},\,n_{2},\,\ldots,\,n_{m}$ be the algebraic multiplicities of the distinct eigenvalues $\mu_{1},\,\mu_{2},\,\ldots,\,\mu_{m}$, respectively. Hence the characteristic polynomial of $A^{*}$ reads $(s-\mu_{1})^{n_{1}}(s-\mu_{2})^{n_{2}}\cdots(s-\mu_{m})^{n_{m}}$ with $n_{1}+n_{2}+\cdots+n_{m}=n$. For $\kappa=1,\,2,\,\ldots,\,m$ let $U_{\kappa}\in\Complex^{n\times n_{\kappa}}$ be a full column rank matrix satisfying ${\rm range}\,U_{\kappa}={\rm null}\,[A^{*}-\mu_{\kappa}I_{n}]^{n_{\kappa}}$. Without loss of generality we let $U_{\kappa}$ be real when the corresponding eigenvalue $\mu_{\kappa}$ is real. Since ${\rm range}\,U_{\kappa}$ is invariant with respect to $A^{*}$, for each $\kappa$ there exists a square matrix  $A_{\kappa}\in\Complex^{n_{\kappa}\times n_{\kappa}}$ satisfying $A^{*}U_{\kappa}=U_{\kappa}A_{\kappa}^{*}$. Note that each $A_{\kappa}^{*}$ has a single distinct eigenvalue. In other words, $(s-\mu_{\kappa})^{n_{\kappa}}$ is the characteristic polynomial of
$A_{\kappa}^{*}$. Define $B_{i\sigma}^{[\kappa]}=U_{\kappa}^{*}B_{i\sigma}$ and construct the following controllability matrix
\begin{eqnarray*}
W_{i\sigma}^{[\kappa]}=[B_{i\sigma}^{[\kappa]}\ A_{\kappa}B_{i\sigma}^{[\kappa]}\ \cdots\ A_{\kappa}^{n_{\kappa}-1}B_{i\sigma}^{[\kappa]}]\,.
\end{eqnarray*}

\begin{lemma}\label{lem:on}
We have
${\rm null}\,[{\rm inc}\,(W_{::}^{[\kappa]})]^{*}={\rm null}\,\Wbig^{*}[I_{q}\otimes U_{\kappa}]$ for all $\kappa=1,\,2,\,\ldots,\,m$.
\end{lemma}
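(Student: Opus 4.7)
The plan is to rewrite membership in each null space as a family of linear equations indexed by a power $k$, so that the two conditions become the same family truncated at different levels, and then to use Cayley--Hamilton on $A_{\kappa}^{*}$ to close the gap.

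First I would reorganise $\Wbig^{*}[I_{q}\otimes U_{\kappa}]y$. Writing $y\in\Complex^{qn_{\kappa}}$ blockwise as $y=[y_{1}^{*}\,\cdots\,y_{q}^{*}]^{*}$ with $y_{i}\in\Complex^{n_{\kappa}}$, the identity $A^{*}U_{\kappa}=U_{\kappa}A_{\kappa}^{*}$ iterates to
\begin{eqnarray*}
(\Abig^{*})^{k}[I_{q}\otimes U_{\kappa}]=[I_{q}\otimes (A^{*})^{k}U_{\kappa}]=[I_{q}\otimes U_{\kappa}][I_{q}\otimes (A_{\kappa}^{*})^{k}]\,.
\end{eqnarray*}
Since $\Bbig^{*}[I_{q}\otimes U_{\kappa}]=[{\rm inc}\,(B_{::}^{[\kappa]})]^{*}$ by the definition $B_{i\sigma}^{[\kappa]}=U_{\kappa}^{*}B_{i\sigma}$, the condition $\Wbig^{*}[I_{q}\otimes U_{\kappa}]y=0$ is equivalent to
\begin{eqnarray*}
[{\rm inc}\,(B_{::}^{[\kappa]})]^{*}[I_{q}\otimes (A_{\kappa}^{*})^{k}]y=0\qquad\mbox{for }k=0,1,\ldots,n-1.
\end{eqnarray*}

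Next I would unpack $[{\rm inc}\,(W_{::}^{[\kappa]})]^{*}y=0$ in the same way. The $(\sigma,i)$-block of $[{\rm inc}\,(W_{::}^{[\kappa]})]^{*}$ is $(W_{i\sigma}^{[\kappa]})^{*}$, whose $k$th row is $(B_{i\sigma}^{[\kappa]})^{*}(A_{\kappa}^{*})^{k}$. Summing over $i$, this gives exactly the same family of equations as above but only for $k=0,1,\ldots,n_{\kappa}-1$. Thus both null spaces are cut out by equations of the form $[{\rm inc}\,(B_{::}^{[\kappa]})]^{*}[I_{q}\otimes (A_{\kappa}^{*})^{k}]y=0$; the first over $0\le k\le n-1$, the second over $0\le k\le n_{\kappa}-1$.

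Finally I would close the gap using Cayley--Hamilton applied to $A_{\kappa}^{*}$. Since $A_{\kappa}^{*}$ has characteristic polynomial $(s-\mu_{\kappa})^{n_{\kappa}}$, every power $(A_{\kappa}^{*})^{k}$ with $k\ge n_{\kappa}$ is a scalar linear combination of $(A_{\kappa}^{*})^{0},\ldots,(A_{\kappa}^{*})^{n_{\kappa}-1}$. Therefore the extra equations $k=n_{\kappa},\ldots,n-1$ are automatically implied by the first $n_{\kappa}$, and the two null-space descriptions coincide. This gives the claimed equality.

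No step is genuinely hard; the only thing to watch is the book-keeping of Kronecker products when passing between $\Abig^{*}=I_{q}\otimes A^{*}$ and $I_{q}\otimes A_{\kappa}^{*}$ through the intertwining identity $A^{*}U_{\kappa}=U_{\kappa}A_{\kappa}^{*}$. That intertwining (together with Cayley--Hamilton on the small block $A_{\kappa}$) is the real engine of the proof.
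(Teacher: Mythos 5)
Your proof is correct and follows essentially the same route as the paper: the intertwining identity $A^{*}U_{\kappa}=U_{\kappa}A_{\kappa}^{*}$ converts $\Wbig^{*}[I_{q}\otimes U_{\kappa}]$ into the constraints $[{\rm inc}\,(B_{::}^{[\kappa]})]^{*}[I_{q}\otimes(A_{\kappa}^{*})^{k}]y=0$, and Cayley--Hamilton on $A_{\kappa}^{*}$ reduces the range $k\le n-1$ to $k\le n_{\kappa}-1$. The only cosmetic difference is that you identify $[{\rm inc}\,(W_{::}^{[\kappa]})]^{*}$ and the truncated constraint family directly as row rearrangements of one another, whereas the paper routes the same observation through the Gram-matrix identity $\Wbig_{\kappa}\Wbig_{\kappa}^{*}=[{\rm inc}\,(W_{::}^{[\kappa]})][{\rm inc}\,(W_{::}^{[\kappa]})]^{*}$.
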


\begin{proof}
Let $\Bbig_{\kappa}=[I_{q}\otimes U_{\kappa}^{*}]\Bbig$ and $\Abig_{\kappa}=[I_{q}\otimes A_{\kappa}]$. Let $\Wbig_{\kappa}=[\Bbig_{\kappa}\ \Abig_{\kappa}\Bbig_{\kappa}\ \cdots\ \Abig_{\kappa}^{n_{\kappa}-1}\Bbig_{\kappa}]$ and define its augmented version as ${\widehat\Wbig}_{\kappa}=[\Bbig_{\kappa}\ \Abig_{\kappa}\Bbig_{\kappa}\ \cdots\ \Abig_{\kappa}^{n-1}\Bbig_{\kappa}]$. Observe that $A^{*}U_{\kappa}=U_{\kappa}A_{\kappa}^{*}$ implies $A^{r*}U_{\kappa}=U_{\kappa}A_{\kappa}^{r*}$ for any nonnegative integer $r$. We can therefore write
\begin{eqnarray*}
\Abig^{r*}[I_{q}\otimes U_{\kappa}]
&=&[I_{q}\otimes A^{r*}][I_{q}\otimes U_{\kappa}]\\
&=&[I_{q}\otimes A^{r*}U_{\kappa}]\\
&=&[I_{q}\otimes U_{\kappa}A_{\kappa}^{r*}]\\
&=&[I_{q}\otimes U_{\kappa}][I_{q}\otimes A_{\kappa}^{r*}]\\
&=&[I_{q}\otimes U_{\kappa}]\Abig_{\kappa}^{r*}\,.
\end{eqnarray*}
Whence follows
\begin{eqnarray}\label{eqn:nade}
\Wbig^{*}[I_{q}\otimes U_{\kappa}]
&=&\left[\begin{array}{c}
\Bbig^{*}\\
\Bbig^{*}\Abig^{*}\\
\vdots\\
\Bbig^{*}\Abig^{(n-1)*}
\end{array}
\right][I_{q}\otimes U_{\kappa}]=\left[\begin{array}{c}
\Bbig^{*}[I_{q}\otimes U_{\kappa}]\\
\Bbig^{*}[I_{q}\otimes U_{\kappa}]\Abig_{\kappa}^{*}\\
\vdots\\
\Bbig^{*}[I_{q}\otimes U_{\kappa}]\Abig_{\kappa}^{(n-1)*}
\end{array}
\right]
=\left[\begin{array}{c}
\Bbig_{\kappa}^{*}\\
\Bbig_{\kappa}^{*}\Abig_{\kappa}^{*}\\
\vdots\\
\Bbig_{\kappa}^{*}\Abig_{\kappa}^{(n-1)*}
\end{array}
\right]\nonumber\\
&=&{\widehat\Wbig}_{\kappa}^{*}\,.
\end{eqnarray}
Since $\Abig_{\kappa}^{*}$ satisfies the characteristic equation of $A_{\kappa}^{*}$ (which is of order $n_{\kappa}\leq n$) we have
\begin{eqnarray}\label{eqn:pink}
{\rm null}\,{\widehat\Wbig}_{\kappa}^{*}={\rm null}\,\Wbig_{\kappa}^{*}\,.
\end{eqnarray}
Moreover, carrying out the multiplication explicitly one can obtain the identity $\Wbig_{\kappa}\Wbig_{\kappa}^{*}=[{\rm inc}\,(W_{::}^{[\kappa]})]\times[{\rm inc}\,(W_{::}^{[\kappa]})]^{*}$ which yields
\begin{eqnarray}\label{eqn:lemo}
{\rm null}\,\Wbig_{\kappa}^{*}={\rm null}\,[{\rm inc}\,(W_{::}^{[\kappa]})]^{*}\,.
\end{eqnarray}
Combining \eqref{eqn:nade}, \eqref{eqn:pink}, and \eqref{eqn:lemo} yields the result.
\end{proof}

\begin{lemma}\label{lem:onade}
The following are equivalent.
\begin{enumerate}
\item The array $[A,\,(B_{::})]$ is $(k,\,\ell)$-controllable.
\item ${\rm range}\,\Wbig\supset{\rm range}\,[(e_{k}-e_{\ell})\otimes I_{n}]$.
\end{enumerate}
\end{lemma}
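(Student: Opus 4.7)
The plan is to express the array as the big system~\eqref{eqn:bigsystem} and compare Def.~\ref{def:klcon} directly with the reachable set. Setting $\xi=\ex(\tau)-e^{\Abig\tau}\ex(0)=\int_{0}^{\tau}e^{\Abig(\tau-t)}\Bbig\yu(t)\,dt$, the set of such perturbations, as $\tau>0$ and $\yu$ vary, equals the controllable subspace ${\rm range}\,\Wbig$ (this is the same LTI reachability fact already exploited in Lemma~\ref{lem:con}). In these terms Def.~\ref{def:klcon} asks that for every $\ex(0)$ one can produce $\xi\in{\rm range}\,\Wbig$ obeying $\xi_{i}=0$ for $i\neq k,\,\ell$ together with $\xi_{k}-\xi_{\ell}=e^{A\tau}(x_{\ell}(0)-x_{k}(0))$.

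The structural observation I will lean on is that $\Wbig\in\G_{n}$, already noted after its definition; consequently every $\xi\in{\rm range}\,\Wbig$ satisfies $\sum_{i}\xi_{i}=0$, i.e., $[S^{*}\otimes I_{n}]\xi=0$. Imposed together with $\xi_{i}=0$ for $i\neq k,\,\ell$ this forces $\xi_{\ell}=-\xi_{k}$, whence $\xi=(e_{k}-e_{\ell})\otimes\xi_{k}\in{\rm range}\,[(e_{k}-e_{\ell})\otimes I_{n}]$. Thus the block-vanishing constraint collapses all the way down to membership in ${\rm range}\,[(e_{k}-e_{\ell})\otimes I_{n}]$, and the remaining equation $2\xi_{k}=e^{A\tau}(x_{\ell}(0)-x_{k}(0))$ says the first free block can be made to equal any prescribed vector in $\Real^{n}$, because $e^{A\tau}$ is invertible and initial conditions are free.

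With this reformulation in hand both implications become essentially bookkeeping. For $(1)\Rightarrow(2)$, fix an arbitrary $w\in\Real^{n}$, pick any $\tau>0$, and choose initial conditions with $x_{\ell}(0)-x_{k}(0)=2e^{-A\tau}w$; $(k,\,\ell)$-controllability then supplies a $\yu$ whose induced $\xi$ equals $(e_{k}-e_{\ell})\otimes w$, so this vector lies in ${\rm range}\,\Wbig$. For $(2)\Rightarrow(1)$, given arbitrary $\ex(0)$, fix any $\tau>0$, set $w=e^{A\tau}(x_{\ell}(0)-x_{k}(0))/2$, invoke the hypothesis to realize $\xi=(e_{k}-e_{\ell})\otimes w\in{\rm range}\,\Wbig$ by some input, and verify that both conditions of Def.~\ref{def:klcon} are met. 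The one step that genuinely does work, rather than merely shuffling notation, is the use of $\Wbig\in\G_{n}$ to turn the partial constraint ``$\xi_{i}=0$ for $i\neq k,\,\ell$'' into full membership in ${\rm range}\,[(e_{k}-e_{\ell})\otimes I_{n}]$; without this observation the two range conditions would not line up.
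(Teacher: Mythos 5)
Your reduction is the right one and, apart from one quantifier slip, it is essentially the paper's own argument: the paper also works with the big system~\eqref{eqn:bigsystem}, also uses the zero-column-sum structure (there phrased as ${\dot x}_{\rm av}=Ax_{\rm av}$, so that $x_{\rm av}(\tau)=0$ forces $x_{k}(\tau)+x_{\ell}(\tau)=0$) to collapse the block-vanishing constraint onto ${\rm range}\,[(e_{k}-e_{\ell})\otimes I_{n}]$, and also transfers the problem between an arbitrary $\ex(0)$ and the special initial condition $(e_{k}-e_{\ell})\otimes(x_{k}(0)-x_{\ell}(0))/2$ by reusing one input signal. Your $(2)\Rightarrow(1)$ direction is fine, though note it needs the fact that the reachable set at any \emph{fixed} $\tau>0$ already equals ${\rm range}\,\Wbig$, which is slightly more than the union-over-$\tau$ statement you quote; that is standard and harmless.

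The gap is in $(1)\Rightarrow(2)$. Definition~\ref{def:klcon} existentially quantifies the terminal time: you hand it an initial condition and it hands you back \emph{some} $\tau$ and some $\yu$. You instead fix $\tau$ first and pre-compensate the initial condition by $e^{-A\tau}$; but the time $\tau'$ the definition actually returns need not equal your $\tau$, so what you obtain is $(e_{k}-e_{\ell})\otimes e^{A(\tau'-\tau)}w\in{\rm range}\,\Wbig$, not $(e_{k}-e_{\ell})\otimes w$. The repair is one line: ${\rm range}\,\Wbig$ is $\Abig$-invariant, hence invariant under $e^{\Abig t}=[I_{q}\otimes e^{At}]$ for every real $t$, and $[I_{q}\otimes e^{At}]\bigl((e_{k}-e_{\ell})\otimes v\bigr)=(e_{k}-e_{\ell})\otimes e^{At}v$, so obtaining $(e_{k}-e_{\ell})\otimes e^{A\tau'}v\in{\rm range}\,\Wbig$ for every $v$ already yields the whole subspace. (The paper sidesteps the issue by showing that each special initial condition is driven exactly to the origin and then citing that the null-controllable set of a continuous-time LTI system equals the controllable subspace; that standard fact conceals the same invariance.) With that one-line fix your proof is correct and coincides in substance with the paper's.
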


\begin{proof}
{\em 1$\implies$2.} Suppose $[A,\,(B_{::})]$ is $(k,\,\ell)$-controllable. Consider the system~\eqref{eqn:bigsystem}. Choose an arbitrary initial condition $\ex(0)=[x_{1}(0)^{*}\ x_{2}(0)^{*}\ \cdots\ x_{q}(0)^{*}]^{*}\in{\rm range}\,[(e_{k}-e_{\ell})\otimes I_{n}]$. Clearly, we have $x_{i}(0)=0$ for $i\neq k,\,\ell$. Moreover, $x_{\rm av}(0)=0$. Let now $\yu:[0,\,\tau]\to\Real^{p}$ be some control signal (with $\tau>0$ finite) that achieves $x_{k}(\tau)-x_{\ell}(\tau)=0$ and $x_{i}(\tau)=e^{A\tau}x_{i}(0)=0$ for $i\neq k,\,\ell$. Such $\yu$ exists because $[A,\,(B_{::})]$ is $(k,\,\ell)$-controllable. Recall that ${\dot x}_{\rm av}=Ax_{\rm av}$ because the actuation is relative~\eqref{eqn:relative}. Hence $x_{\rm av}(\tau)=e^{A\tau}x_{\rm av}(0)=0$. We can therefore write
\begin{eqnarray*}
x_{k}(\tau)+x_{\ell}(\tau)
&=&x_{k}(\tau)+x_{\ell}(\tau)+\sum_{i\neq k,\ell}x_{i}(\tau)\\
&=&q x_{\rm av}(\tau)\\
&=&0\,.
\end{eqnarray*}
Recall $x_{k}(\tau)-x_{\ell}(\tau)=0$. Hence $x_{k}(\tau)+x_{\ell}(\tau)=0$ means $x_{k}(\tau)=x_{\ell}(\tau)=0$ yielding $\ex(\tau)=0$. This implies for the system~\eqref{eqn:bigsystem} that any initial condition
from the set ${\rm range}\,[(e_{k}-e_{\ell})\otimes I_{n}]$ can be driven to the origin in finite
time. This set then must be contained in the controllable subspace. In other words, ${\rm range}\,\Wbig\supset{\rm range}\,[(e_{k}-e_{\ell})\otimes I_{n}]$.

{\em 2$\implies$1.} Suppose ${\rm range}\,[(e_{k}-e_{\ell})\otimes I_{n}]$ is contained
in ${\rm range}\,\Wbig$, the controllable subspace of the system~\eqref{eqn:bigsystem}.
Let us be given an arbitrary initial condition $\ex(0)=[x_{1}(0)^{*}\ x_{2}(0)^{*}\ \cdots\ x_{q}(0)^{*}]^{*}$. Let $z=(x_{k}(0)-x_{\ell}(0))/2$. Then let $\hat\yu:[0,\,\tau]\to\Real^{p}$
be some control signal (with $\tau>0$ finite) that steers the initial condition
$\hat\ex(0)=(e_{k}-e_{\ell})\otimes z$ to the origin $\hat\ex(\tau)=0$. Such $\hat\yu$
exists because $\hat\ex(0)\in{\rm range}\,[(e_{k}-e_{\ell})\otimes I_{n}]$ belongs to the controllable subspace. In particular, we can write
\begin{eqnarray*}
\int_{0}^{\tau}e^{\Abig(\tau-t)}\Bbig\hat\yu(t)dt
=-e^{\Abig\tau}\hat\ex(0)\,.
\end{eqnarray*}
Using the same input for the initial condition $\ex(0)$ yields
\begin{eqnarray*}\label{eqn:tuna}
\ex(\tau)
&=&e^{\Abig\tau}\ex(0)+\int_{0}^{\tau}e^{\Abig(\tau-t)}\Bbig\hat\yu(t)dt\\
&=&e^{\Abig\tau}\ex(0)-e^{\Abig\tau}\hat\ex(0)\\
&=&e^{\Abig\tau}(\ex(0)-\hat\ex(0))\\
&=&[I_{q}\otimes e^{A\tau}](\ex(0)-\frac{1}{2}[(e_{k}-e_{\ell})\otimes (x_{k}(0)-x_{\ell}(0))])\,.
\end{eqnarray*}
It is now easy to verify $x_{k}(\tau)=x_{\ell}(\tau)=e^{A\tau}(x_{k}(0)+x_{\ell}(0))/2$ and $x_{i}(\tau)=e^{A\tau}x_{i}(0)$
for $i\neq k,\,\ell$.
\end{proof}

\begin{theorem}\label{thm:klcon}
The following are equivalent.
\begin{enumerate}
\item The array $[A,\,(B_{::})]$ is $(k,\,\ell)$-controllable.
\item The controllability matrix $\Wbig$ is $(k,\,\ell)$-connected.
\item All the matrices ${\rm inc}\,(W_{::}^{[1]}),\,{\rm inc}\,(W_{::}^{[2]}),\,\ldots,\,{\rm inc}\,(W_{::}^{[m]})$ are $(k,\,\ell)$-connected.
\end{enumerate}
\end{theorem}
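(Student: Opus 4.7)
The plan is to get $1\Leftrightarrow 2$ for free from Lemma~\ref{lem:onade} and Definition~\ref{def:graph}, and then to prove $2\Leftrightarrow 3$ by combining Lemma~\ref{lem:on} with the generalized eigenspace decomposition of $\Abig^{*}=I_{q}\otimes A^{*}$. Indeed, by Definition~\ref{def:graph} the class-$\G_{n}$ matrix $\Wbig$ is $(k,\,\ell)$-connected precisely when ${\rm range}\,\Wbig\supset{\rm range}\,[(e_{k}-e_{\ell})\otimes I_{n}]$, and Lemma~\ref{lem:onade} identifies the latter with $(k,\,\ell)$-controllability; so 1 and 2 are literally the same statement. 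It remains to show $2\Leftrightarrow 3$, which I phrase through orthogonal complements: 2 reads ${\rm null}\,\Wbig^{*}\subset{\rm null}\,[(e_{k}-e_{\ell})^{*}\otimes I_{n}]$, while 3 reads ${\rm null}\,[{\rm inc}\,(W_{::}^{[\kappa]})]^{*}\subset{\rm null}\,[(e_{k}-e_{\ell})^{*}\otimes I_{n_{\kappa}}]$ for every $\kappa$.

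For $2\Rightarrow 3$, I pick any $h\in{\rm null}\,[{\rm inc}\,(W_{::}^{[\kappa]})]^{*}$. By Lemma~\ref{lem:on}, $\xi:=[I_{q}\otimes U_{\kappa}]h$ lies in ${\rm null}\,\Wbig^{*}$, so hypothesis 2 together with the Kronecker identity $(e_{k}-e_{\ell})^{*}\otimes U_{\kappa}=U_{\kappa}\bigl[(e_{k}-e_{\ell})^{*}\otimes I_{n_{\kappa}}\bigr]$ yields $U_{\kappa}[(e_{k}-e_{\ell})^{*}\otimes I_{n_{\kappa}}]h=[(e_{k}-e_{\ell})^{*}\otimes I_{n}]\xi=0$; the full column rank of $U_{\kappa}$ then forces $[(e_{k}-e_{\ell})^{*}\otimes I_{n_{\kappa}}]h=0$, which is 3.

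For $3\Rightarrow 2$, I take $\xi\in{\rm null}\,\Wbig^{*}$. Since ${\rm range}\,\Wbig$ is $\Abig$-invariant, its orthogonal complement ${\rm null}\,\Wbig^{*}$ is $\Abig^{*}$-invariant. The matrix $\Abig^{*}=I_{q}\otimes A^{*}$ has generalized eigenspaces $E_{\kappa}={\rm range}\,[I_{q}\otimes U_{\kappa}]$ summing directly to $\Complex^{qn}$, and the spectral projections onto the $E_{\kappa}$ are polynomials in $\Abig^{*}$; hence $\Abig^{*}$-invariance forces ${\rm null}\,\Wbig^{*}=\bigoplus_{\kappa}({\rm null}\,\Wbig^{*}\cap E_{\kappa})$. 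Writing $\xi=\sum_{\kappa}[I_{q}\otimes U_{\kappa}]h_{\kappa}$ with each summand in ${\rm null}\,\Wbig^{*}$, Lemma~\ref{lem:on} places $h_{\kappa}\in{\rm null}\,[{\rm inc}\,(W_{::}^{[\kappa]})]^{*}$, whereupon 3 yields $[(e_{k}-e_{\ell})^{*}\otimes I_{n_{\kappa}}]h_{\kappa}=0$ for every $\kappa$; summing $[(e_{k}-e_{\ell})^{*}\otimes I_{n}][I_{q}\otimes U_{\kappa}]h_{\kappa}=U_{\kappa}[(e_{k}-e_{\ell})^{*}\otimes I_{n_{\kappa}}]h_{\kappa}=0$ over $\kappa$ then gives $[(e_{k}-e_{\ell})^{*}\otimes I_{n}]\xi=0$. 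The main obstacle I anticipate is cleanly invoking this $\Abig^{*}$-invariant decomposition of ${\rm null}\,\Wbig^{*}$ across the $E_{\kappa}$; everything else is routine Kronecker-product bookkeeping. Should that spectral step feel heavy-handed, a fallback is to mimic the PBH-style $3\Rightarrow 1$ argument in the proof of Theorem~\ref{thm:con}, routed through Lemma~\ref{lem:onade}, to obtain the equivalent contrapositive $\neg 2\Rightarrow\neg 3$.
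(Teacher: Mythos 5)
Your proposal is correct, and its overall architecture coincides with the paper's: the equivalence $1\Leftrightarrow 2$ is read off from Lemma~\ref{lem:onade} and Definition~\ref{def:graph}, and $2\Leftrightarrow 3$ is routed through Lemma~\ref{lem:on} together with the decomposition of $\Complex^{qn}$ into the generalized eigenspaces ${\rm range}\,[I_{q}\otimes U_{\kappa}]$ of $\Abig^{*}$. Your $2\Rightarrow 3$ is the paper's argument run forward rather than by contraposition. The one place where you genuinely diverge is the splitting step in $3\Rightarrow 2$: you invoke the algebraic fact that the spectral projections onto the generalized eigenspaces are polynomials in $\Abig^{*}$, so the $\Abig^{*}$-invariant subspace ${\rm null}\,\Wbig^{*}$ decomposes as the direct sum of its intersections with the ${\rm range}\,[I_{q}\otimes U_{\kappa}]$, giving $\Wbig^{*}[I_{q}\otimes U_{\kappa}]h_{\kappa}=0$ componentwise. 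The paper instead uses the same invariance to write $\Wbig^{*}e^{\Abig^{*}t}\zeta\equiv 0$ and then appeals to the linear independence of the vector-valued exponential modes $t\mapsto[I_{q}\otimes U_{\kappa}e^{A_{\kappa}^{*}t}]f_{\kappa}$ associated with distinct eigenvalues (its equations~\eqref{eqn:spar}--\eqref{eqn:usta}). The two mechanisms prove the same intermediate fact; yours is purely algebraic and arguably cleaner since it avoids introducing the time variable, while the paper's dynamical version reuses machinery that reappears almost verbatim in Lemma~\ref{lem:ur} and Theorem~\ref{thm:mothership}, which is presumably why the author prefers it. Either way, no gap: the projection-polynomial fact you rely on is standard (primary decomposition), and the remaining Kronecker manipulations, including $[(e_{k}-e_{\ell})^{*}\otimes I_{n}][I_{q}\otimes U_{\kappa}]=U_{\kappa}[(e_{k}-e_{\ell})^{*}\otimes I_{n_{\kappa}}]$ and the full-column-rank step, are exactly as in the paper.
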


\begin{proof}
{\em 1 $\Longleftrightarrow$ 2.} By Lemma~\ref{lem:onade}.

{\em 2$\implies$3.} Suppose for some $\kappa\in\{1,\,2,\,\ldots,\,m\}$ the matrix ${\rm inc}\,(W_{::}^{[\kappa]})$ is not $(k,\,\ell)$-connected. That is, ${\rm range}\,[{\rm inc}\,(W_{::}^{[\kappa]})]\not\supset{\rm range}\,[(e_{k}-e_{\ell})\otimes I_{n_{\kappa}}]$. Then ${\rm null}\,\Wbig^{*}[I_{q}\otimes U_{\kappa}]\not\subset
{\rm null}\,[(e_{k}-e_{\ell})^{*}\otimes I_{n_{\kappa}}]$ by Lemma~\ref{lem:on}. Therefore
there exists a vector $f\notin{\rm null}\,[(e_{k}-e_{\ell})^{*}\otimes I_{n_{\kappa}}]$ satisfying
$\Wbig^{*}[I_{q}\otimes U_{\kappa}]f=0$. Define $\xi=[I_{q}\otimes U_{\kappa}]f$. We have
$\xi\notin{\rm null}\,[(e_{k}-e_{\ell})^{*}\otimes I_{n}]$ because $f\notin{\rm null}\,[(e_{k}-e_{\ell})^{*}\otimes I_{n_{\kappa}}]$ and $U_{\kappa}$ is full column rank.
Hence $\Wbig^{*}\xi=\Wbig^{*}[I_{q}\otimes U_{\kappa}]f=0$ implies
${\rm null}\,\Wbig^{*}\not\subset{\rm null}\,[(e_{k}-e_{\ell})^{*}\otimes I_{n}]$. Equivalently,
${\rm range}\,\Wbig\not\supset{\rm range}\,[(e_{k}-e_{\ell})\otimes I_{n}]$, i.e., $\Wbig$
is not $(k,\,\ell)$-connected.

{\em 3$\implies$2.} Suppose ${\rm range}\,\Wbig\not\supset{\rm range}\,[(e_{k}-e_{\ell})\otimes I_{n}]$. Hence ${\rm null}\,\Wbig^{*}\not\subset{\rm null}\,[(e_{k}-e_{\ell})^{*}\otimes I_{n}]$
and we can find
$\zeta\notin{\rm null}\,[(e_{k}-e_{\ell})^{*}\otimes I_{n}]$ satisfying $\Wbig^{*}\zeta=0$. Observe that ${\rm null}\,\Wbig^{*}$ is invariant with respect to $\Abig^{*}$. This allows us to write
$\Wbig^{*}e^{\Abig^{*}t}\zeta\equiv 0$. Note that the $n\times n$ matrix $[U_{1}\ U_{2}\ \cdots\ U_{m}]$ is nonsingular. This means that the $qn\times qn$ matrix $[[I_{q}\otimes U_{1}]\ [I_{q}\otimes U_{2}]\ \cdots\ [I_{q}\otimes U_{m}]]$ is nonsingular. Therefore we can find vectors
$f_{\kappa}\in(\Complex^{n_{\kappa}})^{q}$ satisfying
\begin{eqnarray*}
\zeta=\sum_{\kappa=1}^{m}\,[I_{q}\otimes U_{\kappa}]f_{\kappa}\,.
\end{eqnarray*}
Since we can write
\begin{eqnarray*}
\sum_{\kappa=1}^{m}\,[I_{q}\otimes U_{\kappa}][(e_{k}-e_{\ell})^{*}\otimes I_{n_{\kappa}}]f_{\kappa}
&=&[(e_{k}-e_{\ell})^{*}\otimes I_{n}]\sum_{\kappa=1}^{m}\,[I_{q}\otimes U_{\kappa}]f_{\kappa}\\
&=&[(e_{k}-e_{\ell})^{*}\otimes I_{n}]\zeta\\
&\neq&0
\end{eqnarray*}
we have to have
\begin{eqnarray}\label{eqn:kling}
[(e_{k}-e_{\ell})^{*}\otimes I_{n_{\kappa}}]f_{\kappa}\neq 0
\end{eqnarray}
for some $\kappa$. Recall $A^{*}U_{\kappa}=U_{\kappa}A_{\kappa}^{*}$, which implies
$e^{A^{*}t}U_{\kappa}=U_{\kappa}e^{A_{\kappa}^{*}t}$.
We can therefore write
\begin{eqnarray}\label{eqn:spar}
\Wbig^{*}\sum_{\kappa=1}^{m}\,[I_{q}\otimes U_{\kappa}e^{A_{\kappa}^{*}t}]f_{\kappa}
&=& \Wbig^{*}\sum_{\kappa=1}^{m}\,[I_{q}\otimes e^{A^{*}t}U_{\kappa}]f_{\kappa}\nonumber\\
&=& \Wbig^{*}[I_{q}\otimes e^{A^{*}t}]\sum_{\kappa=1}^{m}\,[I_{q}\otimes U_{\kappa}]f_{\kappa}\nonumber\\
&=& \Wbig^{*}e^{\Abig^{*}t}\zeta\nonumber\\
&\equiv&0\,.
\end{eqnarray}
Now, since no two matrices $A_{\kappa}^{*},\,A_{\nu}^{*}$ ($\kappa\neq\nu$) share a common eigenvalue, the set of mappings $\{t\mapsto [I_{q}\otimes U_{\kappa}e^{A_{\kappa}^{*}t}]f_{\kappa}:f_{\kappa}\neq 0\}$ is linearly independent. Hence
\eqref{eqn:spar} implies $\Wbig^{*}[I_{q}\otimes U_{\kappa}e^{A_{\kappa}^{*}t}]f_{\kappa}\equiv 0$, which in turn implies
\begin{eqnarray}\label{eqn:usta}
\Wbig^{*}[I_{q}\otimes U_{\kappa}]f_{\kappa}=0
\end{eqnarray}
for all $\kappa$. Combining \eqref{eqn:kling} and \eqref{eqn:usta} allows us to claim
${\rm null}\,\Wbig^{*}[I_{q}\otimes U_{\kappa}]\not\subset{\rm null}\,[(e_{k}-e_{\ell})^{*}\otimes I_{n_{\kappa}}]$ for some $\kappa$. Then by Lemma~\ref{lem:on} we have ${\rm null}\,[{\rm inc}\,(W_{::}^{[\kappa]})]^{*}\not\subset{\rm null}\,[(e_{k}-e_{\ell})^{*}\otimes I_{n_{\kappa}}]$. Therefore ${\rm range}\,[{\rm inc}\,(W_{::}^{[\kappa]})]\not\supset{\rm range}\,[(e_{k}-e_{\ell})\otimes I_{n_{\kappa}}]$. That is, ${\rm inc}\,(W_{::}^{[\kappa]})$
is not $(k,\,\ell)$-connected.
\end{proof}
\vspace{0.12in}

Note that the eigenvector test for controllability stated in Theorem~\ref{thm:con} cannot be extended to $(k,\,\ell)$-controllability in general. In particular, an array may fail to be $(k,\,\ell)$-controllable even if all the
matrices ${\rm inc}\,(V_{1}^{*}B_{::}),\,{\rm inc}\,(V_{2}^{*}B_{::}),\,\ldots,\,{\rm inc}\,(V_{m}^{*}B_{::})$ are
$(k,\,\ell)$-connected. A
counterexample is as follows. Consider an array of $q=3$ fourth order systems with
\begin{eqnarray*}
A = \left[\begin{array}{cccc}
0&0&0&0\\
1&0&0&0\\
0&0&0&0\\
0&0&1&0
\end{array}\right]
\quad\mbox{and}\quad
{\rm inc}\,(B_{::}) = \left[\begin{array}{rrr}
     0  &   0  &   0\\
     0  &   0  &  -1\\
     1  &   0  &  -1\\
     0  &   0  &   0\\
     0  &   1  &   0\\
     0  &   0  &   0\\
    -1  &   0  &   0\\
     0  &   0  &   0\\
     0  &  -1  &   0\\
     0  &   0  &   1\\
     0  &   0  &   1\\
     0  &   0  &   0
\end{array}\right]\,.
\end{eqnarray*}
The matrix $A^{*}$ has no eigenvalue other than $\mu_{1}=0$ for which
\begin{eqnarray*}
V_{1} = \left[\begin{array}{cc}
1&0\\
0&0\\
0&1\\
0&0
\end{array}\right]\,.
\end{eqnarray*}
The matrix $\Wbig$ is not $(2,\,3)$-connected for this array. Therefore by Theorem~\ref{thm:klcon}, the array is not $(2,\,3)$-controllable. Despite this lack of $(2,\,3)$-controllability,
the matrix ${\rm inc}\,(V_{1}^{*}B_{::})$ however is $(2,\,3)$-connected.

\section{Positive pairwise controllability}\label{sec:pairposcon}

For the system~\eqref{eqn:bigsystem} let $\R\subset(\Real^{n})^{q}$ be the set of points positively reachable (in finite time) from the
origin, i.e.,
\begin{eqnarray*}
\R=\left\{\xi:\xi=\int_{0}^{\tau}e^{\Abig(\tau-t)}\Bbig\yu(t)dt\ \mbox{for some}\ \yu:[0,\,\tau]\to\Real_{\geq 0}^{p}\ \mbox{with}\ \tau\geq 0\right\}\,.
\end{eqnarray*}
The set $\R$ is a convex cone. The {\em polar} of $\R$ is denoted by $\R^{\circ}$, which itself is a convex cone in $(\Real^{n})^{q}$, and defined as $\R^{\circ}=\{\eta:\xi^{*}\eta\leq 0\ \mbox{for all}\ \xi\in\R\}$. Note that we can write
\begin{eqnarray*}
\R^{\circ}=\{\eta:\Bbig^{*}e^{\Abig^{*}t}\eta\leq 0\ \mbox{for all}\ t\geq 0\}\,.
\end{eqnarray*}
The cone $\R^{\circ}$ is closed and the polar $\R^{\circ\circ}$ of $\R^{\circ}$ equals ${\rm cl}\,\R$, the closure of $\R$ \cite{rockafellar70}.

Let $\Lambda_{\kappa}=[A_{\kappa}^{*}-\mu_{\kappa}I_{n_{\kappa}}]^{*}$ for $\kappa=1,\,2,\,\ldots,\,m$. Since $\mu_{\kappa}$ is the only eigenvalue of $A_{\kappa}^{*}$, the matrix $\Lambda_{\kappa}$ has a single distinct eigenvalue at the origin, i.e., it is nilpotent. In particular, $\Lambda_{\kappa}^{n_{\kappa}}=0$. Recall $B_{i\sigma}^{[\kappa]}=U_{\kappa}^{*}B_{i\sigma}$ and ${\rm range}\,U_{\kappa}={\rm null}\,[A^{*}-\mu_{\kappa}I_{n}]^{n_{\kappa}}$. Let us now define
\begin{eqnarray*}
Q_{i\sigma}^{[\kappa]}=[B_{i\sigma}^{[\kappa]}\ \Lambda_{\kappa}B_{i\sigma}^{[\kappa]}\ \cdots\ \Lambda_{\kappa}^{n_{\kappa}-1}B_{i\sigma}^{[\kappa]}]\,.
\end{eqnarray*}
With slight abuse of notation we also let $B_{:\sigma}^{[\kappa]}=[B_{1\sigma}^{[\kappa]*}\ B_{2\sigma}^{[\kappa]*}\ \cdots\ B_{q\sigma}^{[\kappa]*}]^{*}$ and $B_{:\sigma}=[B_{1\sigma}^{*}\ B_{2\sigma}^{*}\ \cdots\ B_{q\sigma}^{*}]^{*}$. Without loss of generality we henceforth assume
\begin{itemize}
\item ${\rm Re}\,\mu_{1}\geq{\rm Re}\,\mu_{2}\geq\cdots\geq{\rm Re}\,\mu_{m}$ and
\item ${\rm Re}\,\mu_{\kappa}=\mu_{\nu}$ ($\kappa\neq\nu$) implies $\kappa>\nu$,
\end{itemize}
where ${\rm Re}\,\mu_{\kappa}$ denotes the real part of $\mu_{\kappa}$. For each $\mu_{\kappa}\in\Real$, let $\Q_{\kappa}\subset(\Real^{n_{\kappa}})^q$ be the largest subspace contained in ${\rm cone}\,[{\rm inc}\,(Q_{:\sigma}^{[\kappa]})_{\sigma\in\I_{\kappa}}]$ where the index sets $\I_{\kappa}\subset\{1,\,2,\,\ldots,\,p\}$ are constructed as follows. $\I_{1}=\{1,\,2,\,\ldots,\,p\}$ and
\begin{eqnarray*}
\I_{\kappa+1}=\I_\kappa\setminus \I_\kappa^{-}
\end{eqnarray*}
with
\begin{eqnarray*}
\I_{\kappa}^{-}=\left\{\begin{array}{cl} \{\sigma\in\I_{\kappa}:[I_{q}\otimes\Lambda_{\kappa}^{r}]B_{:\sigma}^{[\kappa]}
\notin\Q_{\kappa}\ \mbox{for some}\ r\in\{0,\,1,\,\ldots,\,n_{\kappa}-1\}\}\ &\mbox{if}\ \mu_{\kappa}\in\Real\\
\emptyset\ &\mbox{if}\ \mu_{\kappa}\notin\Real\,.
\end{array}
\right.
\end{eqnarray*}

\begin{lemma}\label{lem:pivotal}
Let $N\in\Real^{n\times n}$ be a nilpotent matrix ($N^{n}=0$) and $C\in\Real^{p\times n}$. Define
the convex cones $\N=\{\eta\in\Real^{n}:Ce^{Nt}\eta\leq 0\ \mbox{for all}\ t\geq 0\}$ and
$\M=\{\eta\in\Real^{n}:CN^{r}\eta\leq 0\ \mbox{for all}\ r\in\{0,\,1,\,\ldots,\,n-1\}\}$. Let
$\D\subset\Real^{n}$ be the smallest subspace containing $\M$. Then $\N\subset\D$.
\end{lemma}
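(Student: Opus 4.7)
The strategy is to show that every $\eta\in\N$ admits a decomposition $\eta=\eta_a-\eta_b$ with $\eta_a,\eta_b\in\M$. Since $\M$ is a convex cone, the set $\M-\M$ is a subspace containing $\M$, and it is in fact the smallest such subspace, so it equals $\D$; producing such a decomposition therefore places $\eta\in\D$.

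To construct the decomposition I would set, for a scalar $\lambda\geq 0$ to be chosen,
\begin{eqnarray*}
\eta_a := e^{\lambda N}\eta,\qquad \eta_b := (e^{\lambda N}-I)\eta,
\end{eqnarray*}
so that $\eta_a-\eta_b=\eta$ automatically; the task reduces to picking $\lambda$ large enough that both $\eta_a$ and $\eta_b$ lie in $\M$. The computation driving the analysis is the identity
\begin{eqnarray*}
CN^s\, e^{\lambda N}\eta \;=\; p^{(s)}(\lambda)\,,\qquad s=0,1,\ldots,n-1\,,
\end{eqnarray*}
where $p(t):=Ce^{Nt}\eta\in\Real^p$ is a vector-valued polynomial in $t$ of degree at most $n-1$ (since $N^n=0$), whose $i$th component is $p_i(t)=\sum_{r=0}^{n-1}(c_i^{*}N^{r}\eta)\,t^{r}/r!$, with $c_i^{*}$ the $i$th row of $C$. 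The hypothesis $\eta\in\N$ is precisely that $p_i(t)\leq 0$ for every $i$ and every $t\geq 0$.

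The core observation is elementary: a polynomial nonpositive on $[0,\infty)$ is either identically zero or has strictly negative leading coefficient, because the leading term dominates as $t\to\infty$. Applied to each $p_i$ with $p_i\not\equiv 0$, this says that every derivative $p_i^{(s)}$ with $s\leq\deg p_i$ inherits a strictly negative leading coefficient, hence $p_i^{(s)}(\lambda)\to -\infty$ as $\lambda\to\infty$; for $s>\deg p_i$ we have $p_i^{(s)}\equiv 0$. I would then choose $\lambda$ large enough that $p_i^{(s)}(\lambda)\leq\min\{0,\,c_i^{*}N^{s}\eta\}$ holds for every pair $(s,i)$ (possible since there are only finitely many such pairs and each $p_i^{(s)}$ either vanishes identically or tends to $-\infty$). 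The inequality $p_i^{(s)}(\lambda)\leq 0$ gives $CN^{s}\eta_a\leq 0$, placing $\eta_a\in\M$, while $p_i^{(s)}(\lambda)\leq c_i^{*}N^{s}\eta$ gives $CN^{s}\eta_b=p^{(s)}(\lambda)-CN^{s}\eta\leq 0$, placing $\eta_b\in\M$.

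The step requiring the most care is the borderline index $s=\deg p_i$: there $p_i^{(s)}$ is just the strictly negative constant $c_i^{*}N^{\deg p_i}\eta$, so the required inequality $p_i^{(s)}(\lambda)\leq c_i^{*}N^{s}\eta$ reduces to an equality and must be verified directly rather than obtained in the large-$\lambda$ limit; similarly, for $s>\deg p_i$ both sides vanish identically and have to be handled trivially. Once these cases are dealt with, the decomposition $\eta=\eta_a-\eta_b$ with $\eta_a,\eta_b\in\M$ places $\eta$ in $\M-\M=\D$, establishing $\N\subset\D$.
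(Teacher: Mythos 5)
Your proof is correct, but it takes a genuinely different route from the paper's. You observe that $\D=\M-\M$ (valid, since $\M$ is a convex cone containing the origin) and then exhibit every $\eta\in\N$ explicitly as a difference of two elements of $\M$, namely $\eta=e^{\lambda N}\eta-(e^{\lambda N}-I)\eta$ for $\lambda$ large, using the identity $CN^{s}e^{\lambda N}\eta=p^{(s)}(\lambda)$ with $p(t)=Ce^{Nt}\eta$ and the fact that a polynomial nonpositive on $[0,\infty)$ has strictly negative leading coefficient; your handling of the borderline exponents $s=\deg p_{i}$ (negative constant, equality rather than strict decrease) and $s>\deg p_{i}$ (both sides zero) closes the only delicate cases, so the argument is complete. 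The paper instead first derives an explicit description of $\D$ as the set $\{\eta:C_{\sigma}N^{r}\eta=0\ \mbox{for all}\ (\sigma,\,r)\in\setP_{0}\}$, where $\setP_{0}$ is the set of index pairs on which no element of $\M$ achieves strict inequality, proves the monotonicity property that $(\sigma,\,r)\in\setP_{0}$ implies $(\sigma,\,r+1)\in\setP_{0}$, and then runs a contradiction argument on a hypothetical $\eta\in\N\setminus\D$ by applying a carefully chosen power $N^{\rho-1}$ to it. Your version is shorter, constructive, and avoids the contradiction; what the paper's longer route buys is precisely the auxiliary machinery (the sets $\setP_{\kappa}^{-}$, the monotonicity of $\setP_{0}$, and the existence of a single $\hat\eta\in\M$ strict on all of $\setP^{-}$) that is reused verbatim in the proof of Theorem~\ref{thm:mothership}, so the lemma's proof there doubles as a toolbox for the later argument.
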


\begin{proof}
Let us first find an explicit expression of the subspace $\D$ in terms of our parameters. To this end let $C_{\sigma}\in\Real^{1\times n}$ denote the rows of $C$, i.e., $[C_{1}^{*}\ C_{2}^{*}\ \cdots\ C_{p}^{*}]^{*}=C$. Hence we can write $\M=\{\eta:C_{\sigma}N^{r}\eta\leq 0\ \mbox{for all}\ (\sigma,\,r)\}$. Define the set of pairs  $\setP^{-}\subset\{1,\,2,\,\ldots,\,p\}\times\{0,\,1,\,\ldots,\,n-1\}=:\setP$ as
$\setP^{-}=\{(\sigma,\,r):\M\cap\{\eta:C_{\sigma}N^{r}\eta<0\}\neq\emptyset\}$. Then let
$\setP_{0}=\setP\setminus\setP^{-}$ and $\M_{0}=\{\eta:C_{\sigma}N^{r}\eta\leq 0\ \mbox{for all}\ (\sigma,\,r)\in\setP_{0}\}$. Define the subspace $\D_{0}=\{\eta:C_{\sigma}N^{r}\eta=0\ \mbox{for all}\ (\sigma,\,r)\in\setP_{0}\}$. Clearly, $\D_{0}\subset\M_{0}$. We now claim
\begin{eqnarray}\label{eqn:DMO}
\D_{0}\supset\M_{0}\,.
\end{eqnarray}
The relation~\eqref{eqn:DMO} trivially holds for the extreme possibilities
$\setP_{0}=\emptyset$ or $\setP^{-}=\emptyset$.
For the case where neither $\setP_{0}$ nor $\setP^{-}$ is empty,
let us  establish our claim by contradiction. Suppose $\D_{0}\not\supset\M_{0}$.
Then we can find $\eta\in\M_{0}$ satisfying $C_{\bar\sigma}N^{\bar r}\eta<0$ for some $(\bar\sigma,\,\bar r)\in\setP_{0}$. Let now ${\mathcal F}=\{\eta_{1},\,\eta_{2},\,\ldots,\,\eta_{\gamma}\}\subset\M$
be a finite collection of vectors with the property that for each pair $(\sigma,\,r)\in\setP^{-}$ there exists some $\eta_{i}\in{\mathcal F}$ satisfying $C_{\sigma}N^{r}\eta_{i}<0$. Such ${\mathcal F}$ exists by how the set $\setP^{-}$ is defined. Let the scalars $\alpha_{1},\,\alpha_{2},\,\ldots,\,\alpha_{\gamma}$ satisfy $\alpha_{i}>0$ for all $i$ and
$\alpha_{1}+\alpha_{2}+\cdots+\alpha_{\gamma}=1$.
Then construct the convex combination of the vectors in ${\mathcal F}$ as $\hat\eta=\alpha_{1}\eta_{1}+\alpha_{2}\eta_{2}+\cdots+\alpha_{\gamma}\eta_{\gamma}$.
Since $\eta_{i}\in\M$ and $\M$ is convex, the new vector $\hat\eta$ belongs to $\M$. Moreover, since $\alpha_{i}$ are strictly positive, we have $C_{\sigma}N^{r}\hat\eta<0$ for all $(\sigma,\,r)\in\setP^{-}$. Now define for $\lambda\in(0,\,1)$
\begin{eqnarray}\label{eqn:etatilde}
\bar\eta=\lambda\eta+(1-\lambda)\hat\eta\,.
\end{eqnarray}
Note that $C_{\bar\sigma}N^{\bar r}\bar\eta<0$ for all $\lambda\in(0,\,1)$. Also, it is easy to check that by choosing $\lambda$ sufficiently small we can make $\bar\eta$ satisfy $C_{\sigma}N^{r}\bar\eta\leq 0$ for all $(\sigma,\,r)$, i.e., $\bar\eta\in\M$. Then
$C_{\bar\sigma}N^{\bar r}\bar\eta<0$ implies $(\bar\sigma,\,\bar r)\in\setP^{-}$, which contradicts $(\bar\sigma,\,\bar r)\in\setP_{0}$. Hence \eqref{eqn:DMO} holds true. In particular, since we also
have $\D_{0}\subset\M_{0}$, we can write $\D_{0}=\M_{0}$.

Next we show $\D=\M_{0}$. Choose an arbitrary vector $\eta$ that belongs to $\M_{0}$, i.e., $C_{\sigma}N^{r}\eta\leq0$ for all $(\sigma,\,r)\in\setP_{0}$. Let $\hat\eta\in\M$ be as before. That is, $C_{\sigma}N^{r}\hat\eta<0$ for all $(\sigma,\,r)\in\setP^{-}$ and $C_{\sigma}N^{r}\hat\eta\leq0$ for all $(\sigma,\,r)\in\setP_{0}$. Consider \eqref{eqn:etatilde}.
Choose $\lambda\in(0,\,1)$ small enough so that $C_{\sigma}N^{r}\bar\eta\leq0$ for all $(\sigma,\,r)$. Hence $\bar\eta\in\M$. Let us now rewrite \eqref{eqn:etatilde} as
\begin{eqnarray*}
\eta=\lambda^{-1}\bar\eta+(\lambda-1)\lambda^{-1}\hat\eta\,.
\end{eqnarray*}
Since both $\hat\eta$ and $\bar\eta$ belong to $\M$, we have $\hat\eta,\,\bar\eta\in\D$. Therefore $\eta\in\D$. Since $\eta$ was arbitrary we have to have $\D=\M_{0}$, i.e., $\M_{0}$ (or $\D_{0}$) is the smallest subspace containing $\M$.

So far we have not used the nilpotency of $N$. An obvious implication of $N^{n}=0$ is $N\M\subset\M$, i.e., the set $\M$ is invariant with respect to $N$.
This invariance imposes a special structure on $\setP^{-}$. To see that let some
pair $(\sigma,\,r)$ with $r\geq 1$ belong to $\setP^{-}$. That is, we can find some
$\eta\in\M$ satisfying $C_{\sigma}N^{r}\eta<0$. Then $N\eta\in\M$ yields
$(\sigma,\,r-1)\in\setP^{-}$ because $C_{\sigma}N^{r-1}(N\eta)<0$. Consequently, the complement
set $\setP_{0}$ enjoys (for $r\neq n-1$)
\begin{eqnarray}\label{eqn:P0}
(\sigma,\,r)\in\setP_{0}\implies (\sigma,\,r+1)\in\setP_{0}\,.
\end{eqnarray}
We are now ready to establish $\N\subset\D$. Suppose otherwise. Then we can find $\eta\in\N$
satisfying $\eta\notin\M_{0}$. Since $N^{n}=0$ the matrix exponential $e^{Nt}$ has finitely many terms. This allows us to write (for all $\sigma$)
\begin{eqnarray}\label{eqn:eNt}
C_{\sigma}\left(I_{n}+Nt+N^{2}\frac{t^{2}}{2}+\cdots+N^{n-1}\frac{t^{n-1}}{(n-1)!}\right)\eta\leq 0\ \mbox{for all}\ t\geq 0\,.
\end{eqnarray}
For each $\sigma$ let $r_{\sigma}\in\{0,\,1,\,\ldots,\,n\}$ be the smallest index
satisfying $C_{\sigma}N^{r}\eta=0$ for all $r\geq r_{\sigma}$. Considering \eqref{eqn:eNt}
as $t\to\infty$ we deduce
\begin{eqnarray}\label{eqn:disc}
C_{\sigma}N^{r_{\sigma}-1}\eta<0\ \ \mbox{if}\ \ r_{\sigma}\neq 0\,.
\end{eqnarray}
Hence we can write
\begin{eqnarray}\label{eqn:sion}
C_{\sigma}N^{r}\eta\leq0\ \ \mbox{for all}\ \ r\geq \max\,\{0,\,r_{\sigma}-1\}\,.
\end{eqnarray}
For each $\sigma$ this time let $\hat r_{\sigma}\in\{0,\,1,\,\ldots,\,n\}$ be the smallest
index satisfying $(\sigma,\,\hat r_{\sigma})\in\setP_{0}$ if exists. Otherwise (i.e., in case $(\sigma,\,n-1)\notin\setP_{0}$) let $\hat r_{\sigma}=n$. Now, due to the property~\eqref{eqn:P0} and $\eta\notin\M_{0}$ we have to have $r_{\sigma}>\hat r_{\sigma}$ for some $\sigma$. Therefore the integer $\rho=\max_{\sigma}\,(r_{\sigma}-\hat r_{\sigma})$ is positive. Define the vector $\hat\eta=N^{\rho-1}\eta$. Given some pair $(\sigma,\,\hat r)\in\setP_{0}$ define $r=\hat r+\rho-1$. Since $\hat r\geq\hat r_{\sigma}$ we have $r\geq r_{\sigma}-1$
and therefore $C_{\sigma}N^{r}\eta\leq0$ by \eqref{eqn:sion}. Hence
\begin{eqnarray*}
C_{\sigma}N^{\hat r}\hat\eta
&=&C_{\sigma}N^{\hat r+\rho-1}\eta\\
&=&C_{\sigma}N^{r}\eta\\
&\leq& 0
\end{eqnarray*}
meaning $\hat\eta\in\M_{0}$, for the pair $(\sigma,\,\hat r)\in\setP_{0}$ was arbitrary. Let now the index $\upsilon\in\{1,\,2,\,\ldots,\,p\}$ satisfy $r_{\upsilon}-\hat r_{\upsilon}=\rho$. Since $\rho$ is positive so is $r_{\upsilon}$.
As a result $C_{\upsilon}N^{r_{\upsilon}-1}\eta<0$ by \eqref{eqn:disc}. Then
\begin{eqnarray*}
C_{\upsilon}N^{\hat r_{\upsilon}}\hat\eta
&=&C_{\upsilon}N^{\hat r_{\upsilon}}N^{\rho-1}\eta\\
&=&C_{\upsilon}N^{r_{\upsilon}-1}\eta\\
&<&0
\end{eqnarray*}
meaning $\hat\eta\notin\D_{0}$ because $(\upsilon,\,\hat r_{\upsilon})\in\setP_{0}$ by definition. But this contradicts $\hat\eta\in\M_{0}$ because $\D_{0}=\M_{0}$.
\end{proof}

\begin{lemma}\label{lem:ur}
Let $\eta\in(\Real^{n})^{q}$ satisfy $\eta=\sum_{\kappa=1}^{m}[I_{q}\otimes U_{\kappa}]f_{\kappa}$
with $f_{\kappa}\in(\Complex^{n_{\kappa}})^{q}$ and $\Bbig^{*}e^{\Abig^{*}t}\eta\leq 0$ for all $t\geq 0$. There exists $\tau\geq0$ such that for each $\kappa$ we have ${\rm Re}\,(B_{:\sigma}^{[\kappa]*}[I_{q}\otimes e^{A_{\kappa}^{*}t}]f_{\kappa})\leq 0$ for all
$\sigma\in\I_{\kappa}$ and $t\geq \tau$.
\end{lemma}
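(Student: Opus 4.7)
The plan is a peeling induction over the spectral classes of $A^*$, ordered by decreasing real part, with Lemma~\ref{lem:pivotal} as the workhorse for converting scalar exponential inequalities into linear-subspace conditions on the $f_\kappa$.

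First I would expand the hypothesis explicitly. Using $\eta=\sum_\kappa [I_q\otimes U_\kappa]f_\kappa$, the intertwining $A^{r*}U_\kappa=U_\kappa A_\kappa^{r*}$, and $A_\kappa^*=\mu_\kappa I_{n_\kappa}+\Lambda_\kappa^*$ (so that $e^{A_\kappa^*t}=e^{\mu_\kappa t}e^{\Lambda_\kappa^*t}$, with $e^{\Lambda_\kappa^*t}$ a matrix polynomial of degree at most $n_\kappa-1$), the $\sigma$-th entry of $\Bbig^*e^{\Abig^*t}\eta$ takes the form
\begin{eqnarray*}
\sum_{\kappa=1}^{m}e^{\mu_\kappa t}\,p_{\kappa\sigma}(t),\qquad p_{\kappa\sigma}(t)=B_{:\sigma}^{[\kappa]*}[I_q\otimes e^{\Lambda_\kappa^*t}]f_\kappa,
\end{eqnarray*}
a (real-valued, by the realness of $\eta$) linear combination of distinct exponentials times polynomials that is $\leq 0$ for all $t\geq 0$.

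Next I would establish a dominance principle: for any finite sum $F(t)=\sum_{j}e^{\mu_j t}q_j(t)$ with distinct $\mu_j$ and polynomial $q_j$ that is real-valued and $\leq 0$ for all sufficiently large $t$, the terms of maximal ${\rm Re}\,\mu_j$ are severely constrained. Indeed, such terms dominate asymptotically, so their combined real part must itself be $\leq 0$ eventually; oscillation of $e^{i\omega t}$ with $\omega\neq 0$ against polynomial amplitudes then forces $q_j\equiv 0$ for every complex $\mu_j$ in the maximal class, while the unique real $\mu_j$ (if present) in that class must satisfy $q_j(t)\leq 0$ for large $t$. This reduces eventually-nonpositive exponential sums to nonpositive polynomial conditions, which is exactly the input Lemma~\ref{lem:pivotal} is designed to process.

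The induction then runs over $\kappa=1,\ldots,m$, assuming the conclusion for all $\kappa'<\kappa$. At stage $\kappa$ the contributions certified at earlier stages may be absorbed into the upper bound: the shrinking index set $\I_{\kappa+1}=\I_\kappa\setminus\I_\kappa^-$ is designed precisely so that on $\I_\kappa$ the aggregate stage-$\kappa$ term is not strictly dominated (equivalently, the earlier contributions have already been pushed into the lineality subspace $\Q_{\kappa-1}$), allowing the dominance principle to apply to the residual sum. If $\mu_\kappa\notin\Real$ this forces $p_{\kappa\sigma}\equiv 0$ for $\sigma\in\I_\kappa$, consistent with $\I_\kappa^-=\emptyset$; if $\mu_\kappa\in\Real$, I would invoke Lemma~\ref{lem:pivotal} with $N=\Lambda_\kappa^*$ and $C$ built from the rows $B_{:\sigma}^{[\kappa]*}$ for $\sigma\in\I_\kappa$, translating the scalar-polynomial inequalities into the subspace statement that each $[I_q\otimes\Lambda_\kappa^{r}]f_\kappa$ belongs to the lineality space of the constraint cone, which is precisely $\Q_\kappa$ as defined in the paper. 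The indices $\I_\kappa^-$ removed to form $\I_{\kappa+1}$ are exactly those where the inclusion fails, i.e.\ those whose stage-$\kappa$ contribution is strictly (in the cone sense) negative; they therefore play no role at later stages.

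The main obstacle is verifying the claim embedded in the previous paragraph that earlier-stage contributions can be absorbed without spoiling the dominance of the current class on $\I_\kappa$ — this is the real content of the index-set bookkeeping and the single place where the delicate definition of $\Q_\kappa$ (as the \emph{largest} subspace inside a specific cone built from the columns $[I_q\otimes\Lambda_\kappa^r]B_{:\sigma}^{[\kappa]}$) does its work, funneling strict negativity into $\I_\kappa^-$ and borderline equality into $\I_{\kappa+1}$. Once this is in hand, $\tau$ is chosen as the maximum of the finitely many ``for $t$ large'' thresholds arising across the $m$ induction steps.
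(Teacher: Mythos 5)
Your proposal follows essentially the same route as the paper's proof: expand each entry of $\Bbig^{*}e^{\Abig^{*}t}\eta$ as a quasipolynomial, use the decreasing-real-part ordering to obtain eventual nonpositivity of the leading partial sums, kill the complex classes by the oscillation argument, and for real $\mu_{\kappa}$ invoke Lemma~\ref{lem:pivotal} so that on the surviving index set $\I_{\kappa+1}=\I_{\kappa}\setminus\I_{\kappa}^{-}$ the stage-$\kappa$ term vanishes identically and the induction descends. The only slip is notational: Lemma~\ref{lem:pivotal} places the shifted vector $[I_{q}\otimes e^{\Lambda_{\kappa}^{*}\tau}]f_{\kappa}$ in $\Q_{\kappa}^{\perp}$, while it is the columns $[I_{q}\otimes\Lambda_{\kappa}^{r}]B_{:\sigma}^{[\kappa]}$ (for $\sigma\in\I_{\kappa}\setminus\I_{\kappa}^{-}$) that lie in $\Q_{\kappa}$ --- not ``$[I_{q}\otimes\Lambda_{\kappa}^{r}]f_{\kappa}$'' as written.
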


\begin{proof}
Note that $\Bbig^{*}e^{\Abig^{*}t}\eta\leq 0$ for all $t\geq 0$ means for all $\sigma$ we have $B_{:\sigma}^{*}e^{\Abig^{*}t}\eta\leq 0$ for all $t\geq 0$. We can write
\begin{eqnarray}\label{eqn:bearing}
\sum_{\kappa=1}^{m}{\rm Re}\,(B_{:\sigma}^{[\kappa]*}[I_{q}\otimes e^{A_{\kappa}^{*}t}]f_{\kappa})
&=& {\rm Re}\,\left(\sum_{\kappa=1}^{m}B_{:\sigma}^{*}[I_{q}\otimes U_{\kappa}][I_{q}\otimes e^{A_{\kappa}^{*}t}]f_{\kappa}\right)\nonumber\\
&=& {\rm Re}\,\left(\sum_{\kappa=1}^{m}B_{:\sigma}^{*}[I_{q}\otimes U_{\kappa}e^{A_{\kappa}^{*}t}]f_{\kappa}\right)\nonumber\\
&=& {\rm Re}\,\left(\sum_{\kappa=1}^{m}B_{:\sigma}^{*}[I_{q}\otimes e^{A^{*}t}U_{\kappa}]f_{\kappa}\right)\nonumber\\
&=& {\rm Re}\,\left(\sum_{\kappa=1}^{m}B_{:\sigma}^{*}[I_{q}\otimes e^{A^{*}t}][I_{q}\otimes U_{\kappa}]f_{\kappa}\right)\nonumber\\
&=& {\rm Re}\,\left(\sum_{\kappa=1}^{m}B_{:\sigma}^{*}e^{\Abig^{*}t}[I_{q}\otimes U_{\kappa}]f_{\kappa}\right)\nonumber\\
&=& B_{:\sigma}^{*}e^{\Abig^{*}t}{\rm Re}\,\left(\sum_{\kappa=1}^{m}[I_{q}\otimes U_{\kappa}]f_{\kappa}\right)\nonumber\\
&=& B_{:\sigma}^{*}e^{\Abig^{*}t}\eta\nonumber\\
&\leq&0
\end{eqnarray}
where $U_{\kappa}e^{A_{\kappa}^{*}t}=e^{A^{*}t}U_{\kappa}$ follows from $U_{\kappa}A_{\kappa}^{*}=A^{*}U_{\kappa}$. Recall our ordering ${\rm Re}\,\mu_{1}\geq{\rm Re}\,\mu_{2}\geq\cdots\geq{\rm Re}\,\mu_{m}$ with the extra condition: whenever two distinct
indices $\kappa,\,\nu$ satisfy ${\rm Re}\,\mu_{\kappa}=\mu_{\nu}$ we have $\kappa>\nu$.
Since each $A_{\kappa}^{*}$ has a single distinct eigenvalue $\mu_{\kappa}$ inequality~\eqref{eqn:bearing} implies the existence of $\tau\geq 0$
such that for all $t\geq \tau$ we can write
\begin{eqnarray}\label{eqn:incidentallyzero}
\sum_{\kappa=1}^\nu{\rm Re}\,(B_{:\sigma}^{[\kappa]*}[I_{q}\otimes e^{A_{\kappa}^{*}t}]f_{\kappa})\leq 0
\end{eqnarray}
for all $\nu\in\{1,\,2,\,\ldots,\,m\}$ and $\sigma$. We now claim for all $\nu$ and $t\geq\tau$
\begin{eqnarray}\label{eqn:identicallyzero}
B_{:\sigma}^{[\kappa]*}[I_{q}\otimes e^{A_{\kappa}^{*}t}]f_{\kappa}=0\ \ \mbox{for all}\ \ \kappa\leq\nu\ \ \mbox{and}\ \ \sigma\in\I_{\nu}\setminus \I_{\nu}^{-}\,.
\end{eqnarray}
Let us establish our claim by induction. Suppose \eqref{eqn:identicallyzero} holds for some $\nu\in\{1,\,2,\,\ldots,\,m-1\}$. Then \eqref{eqn:incidentallyzero} allows us to write
for $\sigma\in\I_{\nu}\setminus \I_{\nu}^{-}$
\begin{eqnarray}\label{eqn:whynot}
{\rm Re}\,(B_{:\sigma}^{[\nu+1]*}[I_{q}\otimes e^{A_{\nu+1}^{*}t}]f_{\nu+1})
&=& \sum_{\kappa=1}^{\nu+1}{\rm Re}\,(B_{:\sigma}^{[\kappa]*}[I_{q}\otimes e^{A_{\kappa}^{*}t}]f_{\kappa})\nonumber\\
&\leq&0\,.
\end{eqnarray}
If $\mu_{\nu+1}\notin\Real$ then \eqref{eqn:whynot} implies $B_{:\sigma}^{[\nu+1]*}[I_{q}\otimes e^{A_{\nu+1}^{*}t}]f_{\nu+1}\equiv 0$, for otherwise, the oscillations would not let
the signal $t\mapsto {\rm Re}\,(B_{:\sigma}^{[\nu+1]*}[I_{q}\otimes e^{A_{\nu+1}^{*}t}]f_{\nu+1})$ remain nonpositive indefinitely. Hence
\eqref{eqn:identicallyzero} holds for $\nu+1$ because
\begin{eqnarray}\label{eqn:because}
\I_{\nu+1}\setminus \I_{\nu+1}^{-}\subset\I_{\nu+1}=\I_{\nu}\setminus \I_{\nu}^{-}\,.
\end{eqnarray}
Let us now consider the case $\mu_{\nu+1}\in\Real$. Since we can write $e^{A_{\nu+1}^{*}t}=e^{\mu_{\nu+1}t}e^{\Lambda_{\nu+1}^{*}t}$ inequality~\eqref{eqn:whynot}
implies $B_{:\sigma}^{[\nu+1]*}[I_{q}\otimes e^{\Lambda_{\nu+1}^{*}t}]f_{\nu+1}
\leq0$ for all $t\geq \tau$ and $\sigma\in\I_{\nu+1}$. This means the vector
$\tilde f_{\nu+1}=[I_{q}\otimes e^{\Lambda_{\nu+1}^{*}\tau}]f_{\nu+1}$ belongs
to the convex cone $\N_{\nu+1}=\{\zeta:B_{:\sigma}^{[\nu+1]*}[I_{q}\otimes e^{\Lambda_{\nu+1}^{*}t}]\zeta
\leq0\ \mbox{for all}\ \sigma\in\I_{\nu+1}\ \mbox{and}\ t\geq 0\}$. Let $\M_{\nu+1}=({\rm cone}\,[{\rm inc}\,(Q_{:\sigma}^{[\nu+1]})_{\sigma\in\I_{\nu+1}}])^{\circ}$. That is,
$\M_{\nu+1}=\{\zeta:B_{:\sigma}^{[\nu+1]*}[I_{q}\otimes \Lambda_{\nu+1}^{r*}]\zeta\leq0\ \mbox{for all}\ \sigma\in\I_{\nu+1}\ \mbox{and}\ r\}$.
Since $\Q_{\nu+1}$ is the largest subspace contained in ${\rm cone}\,[{\rm inc}\,(Q_{:\sigma}^{[\nu+1]})_{\sigma\in\I_{\nu+1}}]$, the smallest subspace containing
its polar $\M_{\nu+1}$ is $\Q_{\nu+1}^{\perp}$. Since the matrix $[I_{q}\otimes \Lambda_{\nu+1}^{*}]$ is nilpotent, we can invoke Lemma~\ref{lem:pivotal}, which tells us that the subspace $\Q_{\nu+1}^{\perp}$ contains also $\N_{\nu+1}$. Therefore we have
$\tilde f_{\nu+1}\in\Q_{\nu+1}^{\perp}$. Then (since by definition
$[I_{q}\otimes \Lambda_{\nu+1}^{r}]B_{:\sigma}^{[\nu+1]}\in\Q_{\nu+1}$ for all $\sigma\in
\I_{\nu+1}\setminus\I_{\nu+1}^{-}$ and $r$) the structure
(due to $\Lambda_{\kappa}^{n_{\kappa}}=0$)
\begin{eqnarray}\label{eqn:matrixexp}
[I_{q}\otimes e^{\Lambda_{\kappa}^*t}]=\sum_{r=0}^{n_{\kappa}-1}[I_{q}\otimes \Lambda_{\kappa}^{r*}]\frac{t^{r}}{r!}
\end{eqnarray}
allows us to write $B_{:\sigma}^{[\nu+1]*}[I_{q}\otimes e^{\Lambda_{\nu+1}^{*}t}]\tilde f_{\nu+1}=0$ for all $\sigma\in
\I_{\nu+1}\setminus\I_{\nu+1}^{-}$ and $t\geq 0$. Note that $e^{A_{\nu+1}^{*}t}=e^{\mu_{\nu+1}t}e^{\Lambda_{\nu+1}^{*}t}$ and
$\tilde f_{\nu+1}=[I_{q}\otimes e^{\Lambda_{\nu+1}^{*}\tau}]f_{\nu+1}$. Hence we have \eqref{eqn:identicallyzero} for $\nu+1$ for all $t\geq\tau$ thanks to \eqref{eqn:because}. What now remains is that we show \eqref{eqn:identicallyzero} for $\nu=1$. This however
follows from the previous arguments once we note that by \eqref{eqn:incidentallyzero} we have ${\rm Re}\,(B_{:\sigma}^{[1]*}[I_{q}\otimes e^{A_{\kappa}^{*}t}]f_{1})\leq 0$ and that $\I_{1}$ equals $\{1,\,2,\,\ldots,\,p\}$ by definition.

Having established \eqref{eqn:identicallyzero}, let us now be given any index $\gamma\in\{1,\,2,\,\ldots,\,m\}$. If $\gamma=1$ then by letting $\nu=1$ in \eqref{eqn:incidentallyzero} we have ${\rm Re}\,(B_{:\sigma}^{[\gamma]*}[I_{q}\otimes e^{A_{\gamma}^{*}t}]f_{\gamma})\leq 0$ for all
$\sigma\in\I_{\gamma}$ and $t\geq\tau$. Consider now the case $\gamma\geq 2$.
Invoking \eqref{eqn:identicallyzero} with $\nu=\gamma-1$ we obtain $B_{:\sigma}^{[\kappa]*}[I_{q}\otimes e^{A_{\kappa}^{*}t}]f_{\kappa}=0$ for all $\kappa\leq\gamma-1$ and $\sigma\in\I_{\gamma-1}\setminus \I_{\gamma-1}^{-}=\I_{\gamma}$. Then by \eqref{eqn:incidentallyzero} we can write
\begin{eqnarray*}
{\rm Re}\,(B_{:\sigma}^{[\gamma]*}[I_{q}\otimes e^{A_{\gamma}^{*}t}]f_{\gamma})
&=&{\rm Re}\,(B_{:\sigma}^{[\gamma]*}[I_{q}\otimes e^{A_{\gamma}^{*}t}]f_{\gamma})+\sum_{\kappa=1}^{\gamma-1}{\rm Re}\,(B_{:\sigma}^{[\kappa]*}[I_{q}\otimes e^{A_{\kappa}^{*}t}]f_{\kappa})\\
&=&\sum_{\kappa=1}^{\gamma}{\rm Re}\,(B_{:\sigma}^{[\kappa]*}[I_{q}\otimes e^{A_{\kappa}^{*}t}]f_{\kappa})\\
&\leq&0
\end{eqnarray*}
for all $\sigma\in\I_{\gamma}$ and $t\geq\tau$. Hence the result.
\end{proof}

\begin{assumption}\label{assume:eigen}
If $\mu_{\kappa}\notin\Real$ satisfies ${\rm Re}\,\mu_{\kappa}=\mu_{\nu}$ for some
$\nu\in\{1,\,2,\,\ldots,\,m\}$ then $\Lambda_{\kappa}=0$.
\end{assumption}

\begin{theorem}\label{thm:mothership}
Under Assumption~\ref{assume:eigen}, the following two conditions are equivalent.
\begin{enumerate}
\item $\R^{\circ}\subset{\rm null}\,[(e_{k}-e_{\ell})^{*}\otimes I_{n}]$.
\item The below statements simultaneously hold.
\begin{enumerate}
\item ${\rm inc}\,(Q_{:\sigma}^{[\kappa]})_{\sigma\in\I_{\kappa}}$ is strongly $(k,\,\ell)$-connected for all $\mu_{\kappa}\in\Real$.
\item ${\rm inc}\,(Q_{:\sigma}^{[\kappa]})_{\sigma\in\I_{\kappa}}$ is $(k,\,\ell)$-connected for all $\mu_{\kappa}\notin\Real$.
\end{enumerate}
\end{enumerate}
\end{theorem}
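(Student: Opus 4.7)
The overall plan is to reduce the polar cone condition $\eta \in \R^{\circ}$ to per-eigenspace conditions on the components of the unique decomposition $\eta = \sum_{\kappa=1}^{m} [I_{q}\otimes U_{\kappa}] f_{\kappa}$, available because the columns of $[U_{1}\ \cdots\ U_{m}]$ form a basis of $\Complex^{n}$. Under this decomposition one has $[(e_{k}-e_{\ell})^{*}\otimes I_{n}]\eta = \sum_{\kappa} U_{\kappa} [(e_{k}-e_{\ell})^{*}\otimes I_{n_{\kappa}}] f_{\kappa}$, which vanishes exactly when every $[(e_{k}-e_{\ell})^{*}\otimes I_{n_{\kappa}}] f_{\kappa} = 0$ (again by linear independence across $\kappa$). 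The theorem thus reduces to matching the constraints that membership in $\R^{\circ}$ imposes on each $f_{\kappa}$ against the per-$\kappa$ content of (2a) and (2b).

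For the direction $(2)\Rightarrow(1)$, I take $\eta \in \R^{\circ}$ and invoke Lemma~\ref{lem:ur} to obtain, for some $\tau \geq 0$ and each $\kappa$, the inequality ${\rm Re}\,(B_{:\sigma}^{[\kappa]*}[I_{q}\otimes e^{A_{\kappa}^{*}t}] f_{\kappa}) \leq 0$ for $\sigma \in \I_{\kappa}$ and $t \geq \tau$. When $\mu_{\kappa} \in \Real$, substituting $g_{\kappa} = [I_{q}\otimes e^{\Lambda_{\kappa}^{*}\tau}] f_{\kappa}$ places $g_{\kappa}$ in the cone $\N$ of Lemma~\ref{lem:pivotal} (taken with $N = [I_{q}\otimes\Lambda_{\kappa}]$ and rows built from $B_{:\sigma}^{[\kappa]*}$), so $g_{\kappa}$ lies in the smallest subspace containing the associated $\M$, which equals $\Q_{\kappa}^{\perp}$; invariance of $\Q_{\kappa}$ under $[I_{q}\otimes\Lambda_{\kappa}]$ (the cone is mapped into itself, and $\Q_{\kappa}$ is its largest enclosed subspace) then transfers this through the polynomial $e^{-\Lambda_{\kappa}^{*}\tau}$ to give $f_{\kappa} \in \Q_{\kappa}^{\perp}$. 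When $\mu_{\kappa} \notin \Real$, the oscillation argument already deployed inside the proof of Lemma~\ref{lem:ur} forces $B_{:\sigma}^{[\kappa]*}[I_{q}\otimes e^{A_{\kappa}^{*}t}] f_{\kappa} \equiv 0$ for $\sigma \in \I_{\kappa}$; Taylor-expanding at $t=0$ and using $A_{\kappa}^{*} = \mu_{\kappa}I + \Lambda_{\kappa}^{*}$ gives $f_{\kappa} \in {\rm null}\,[{\rm inc}\,(Q_{:\sigma}^{[\kappa]})_{\sigma\in\I_{\kappa}}]^{*}$. Condition (2a) places the subspace ${\rm range}\,[(e_{k}-e_{\ell})\otimes I_{n_{\kappa}}]$ inside the cone, hence inside its largest enclosed subspace $\Q_{\kappa}$, yielding $\Q_{\kappa}^{\perp} \subset {\rm null}\,[(e_{k}-e_{\ell})^{*}\otimes I_{n_{\kappa}}]$; condition (2b) gives the same inclusion for ${\rm null}\,[{\rm inc}(Q)]^{*}$. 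Hence each $f_{\kappa}$ kills $(e_{k}-e_{\ell})^{*}\otimes I_{n_{\kappa}}$ and $\eta$ lies in ${\rm null}\,[(e_{k}-e_{\ell})^{*}\otimes I_{n}]$.

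For the reverse direction $(1)\Rightarrow(2)$, I argue by contrapositive: assume (2) fails at some $\kappa$ and construct $\eta \in \R^{\circ}$ with $[(e_{k}-e_{\ell})^{*}\otimes I_{n}]\eta \neq 0$. In the real case, the failure $\Q_{\kappa} \not\supset {\rm range}\,[(e_{k}-e_{\ell})\otimes I_{n_{\kappa}}]$ together with the fact (extracted from the proof of Lemma~\ref{lem:pivotal}) that $\Q_{\kappa}^{\perp}$ is the linear hull of the polar cone $\M_{\kappa} = ({\rm cone}\,[{\rm inc}\,(Q_{:\sigma}^{[\kappa]})_{\sigma\in\I_{\kappa}}])^{\circ}$ extracts some $f_{\kappa} \in \M_{\kappa}$ with $[(e_{k}-e_{\ell})^{*}\otimes I_{n_{\kappa}}] f_{\kappa} \neq 0$; the complex case is analogous via ${\rm null}\,[{\rm inc}(Q)]^{*}$, with $f_{\bar\kappa} = \overline{f_{\kappa}}$ added to keep $\eta$ real. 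The candidate is $\eta$ built primarily from $[I_{q}\otimes U_{\kappa}] f_{\kappa}$ (plus its conjugate partner if needed). For $\sigma \in \I_{\kappa}$, membership $B_{:\sigma}^{*}e^{\Abig^{*}t}\eta \leq 0$ is automatic because $f_{\kappa} \in \M_{\kappa}$ makes each coefficient of the polynomial expansion $[I_{q}\otimes e^{\Lambda_{\kappa}^{*}t}] = \sum_{r} \frac{t^{r}}{r!} [I_{q}\otimes\Lambda_{\kappa}^{*r}]$ nonpositive. For $\sigma \notin \I_{\kappa}$ (pruned at an earlier stage $\nu < \kappa$ because $[I_{q}\otimes\Lambda_{\nu}^{r}] B_{:\sigma}^{[\nu]} \notin \Q_{\nu}$), I augment $\eta$ with corrective terms from dominant eigenspaces $\nu < \kappa$, selecting the auxiliary $f_{\nu}$'s inductively so that the dominance ordering ${\rm Re}\,\mu_{1} \geq \cdots \geq {\rm Re}\,\mu_{m}$ together with Assumption~\ref{assume:eigen} allows these corrections to cancel or dominate the sign of the $\kappa$-th contribution on every row $\sigma$ for large $t$.

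The main obstacle is precisely this last step, globally verifying the polar inequalities for all $p$ rows. The forward direction proceeds cleanly through Lemma~\ref{lem:ur} and Lemma~\ref{lem:pivotal}; constructing a valid witness in the reverse direction, by contrast, requires inductive bookkeeping through $\nu < \kappa$, and Assumption~\ref{assume:eigen} is essential there: it guarantees that whenever a non-real $\mu_{\kappa}$ shares its real part with a real eigenvalue its block is diagonalizable ($\Lambda_{\kappa} = 0$), so purely oscillatory modes are not amplified by polynomial growth and thus cannot spoil the dominance ordering used to isolate the $\kappa$-th contribution. This is also the step where the asymmetric treatment of real and non-real eigenvalues in the definition of $\I_{\kappa}^{-}$ is paid back: non-real eigenvalues contribute only oscillatory corrections that need no pruning.
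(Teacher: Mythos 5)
Your proposal is correct and follows essentially the same route as the paper: the same decomposition $\eta=\sum_{\kappa}[I_{q}\otimes U_{\kappa}]f_{\kappa}$, Lemma~\ref{lem:ur} plus Lemma~\ref{lem:pivotal} to force each $f_{\kappa}$ into $\Q_{\kappa}^{\perp}$ (or ${\rm null}\,[{\rm inc}\,(Q_{:\sigma}^{[\kappa]})_{\sigma\in\I_{\kappa}}]^{*}$) in one direction, and in the other direction the same witness built from a violating $f_{\nu}$ plus strictly-negative correctors $f_{\kappa}\in\M_{\kappa}$ at the dominant real eigenvalues, scaled so the $-\delta_{\kappa}e^{\mu_{\kappa}t}$ terms absorb the slower polynomial-times-exponential terms, with Assumption~\ref{assume:eigen} neutralizing non-real eigenvalues that share a real part. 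The one point to tighten is that membership in $\R^{\circ}$ requires $\Bbig^{*}e^{\Abig^{*}t}\eta\leq 0$ for \emph{all} $t\geq 0$, not just for large $t$ as your sketch suggests; the paper closes this by choosing the coefficients $c_{\kappa}$ to satisfy the dominance inequality on the whole half-line (possible since each ratio $\Delta_{\gamma}(t)e^{(\mu_{\gamma}-\mu_{\kappa})t}$ is bounded on $[0,\infty)$), and the unique-$\kappa_{\sigma}$ bookkeeping for rows $\sigma\notin\I_{\nu}$ still needs to be written out.
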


\begin{proof}
{\em 1 $\implies$ 2.} Suppose the condition 2a fails. That is,
${\rm cone}\,[{\rm inc}\,(Q_{:\sigma}^{[\nu]})_{\sigma\in\I_{\nu}}]\not\supset
{\rm range}\,[(e_{k}-e_{\ell})\otimes I_{n_{\nu}}]$ for some $\mu_{\nu}\in\Real$.
Without loss of generality assume that
this $\mu_{\nu}$ is the largest real eigenvalue for which 2a fails. Absence of strong $(k,\,\ell)$-connectivity implies that the polar $({\rm cone}\,[{\rm inc}\,(Q_{:\sigma}^{[\nu]})_{\sigma\in\I_{\nu}}])^{\circ}$ is
not contained in
${\rm null}\,[(e_{k}-e_{\ell})^{*}\otimes I_{n_{\nu}}]$. Hence we can find a vector
$f_{\nu}\not\in{\rm null}\,[(e_{k}-e_{\ell})^{*}\otimes I_{n_{\nu}}]$ satisfying $B_{:\sigma}^{[\nu]*}[I_{q}\otimes \Lambda_{\nu}^{r*}]f_{\nu}\leq0$ for all $\sigma\in\I_{\nu}$ and $r\in\{0,\,1,\,\ldots,\,n_{\nu}-1\}$.

For $\kappa<\nu$ and $\mu_{\kappa}\in\Real$ let $\M_{\kappa}=({\rm cone}\,[{\rm inc}\,(Q_{:\sigma}^{[\kappa]})_{\sigma\in\I_{\kappa}}])^{\circ}$. That is,
$\M_{\kappa}=\{f_{\kappa}\in(\Real^{n_{\kappa}})^{q}:B_{:\sigma}^{[\kappa]*}[I_{q}\otimes \Lambda_{\kappa}^{r*}]f_{\kappa}\leq0\ \mbox{for all}\ \sigma\in\I_{\kappa}\ \mbox{and}\ r\}$.
Since $\Q_{\kappa}$ is the largest
subspace contained in ${\rm cone}\,[{\rm inc}\,(Q_{:\sigma}^{[\kappa]})_{\sigma\in\I_{\kappa}}]$, the smallest subspace containing
its polar $\M_{\kappa}$ is $\Q_{\kappa}^{\perp}$. Also, strong $(k,\,\ell)$-connectivity of ${\rm inc}\,(Q_{:\sigma}^{[\kappa]})_{\sigma\in\I_{\kappa}}$ implies $\Q_{\kappa}\supset
{\rm range}\,[(e_{k}-e_{\ell})\otimes I_{n_{\kappa}}]$. Consequently,  $\Q_{\kappa}^{\perp}\subset
{\rm null}\,[(e_{k}-e_{\ell})^{*}\otimes I_{n_{\kappa}}]$. Let $\setP_{\kappa}=\I_{\kappa}\times\{0,\,1,\,\ldots,\,n_{\kappa}-1\}$ and define $\setP_{\kappa}^{-}=\{(\sigma,\,r)\in\setP_{\kappa}:\M_{\kappa}
\cap\{f_{\kappa}:B_{:\sigma}^{[\kappa]*}[I_{q}\otimes \Lambda_{\kappa}^{r*}]f_{\kappa}< 0\}\neq\emptyset\}$. Note that $(\sigma,\,r)\in\setP_{\kappa}^{-}$
means there exist $f_{\kappa}\in\M_{\kappa}\subset\Q_{\kappa}^{\perp}$ satisfying $B_{:\sigma}^{[\kappa]*}[I_{q}\otimes \Lambda_{\kappa}^{r*}]f_{\kappa}\neq 0$ yielding
$[I_{q}\otimes\Lambda_{\kappa}^{r}]B_{:\sigma}^{[\kappa]}
\notin\Q_{\kappa}$. Therefore $(\sigma,\,r)\in\setP_{\kappa}^{-}$ implies $\sigma\in\I_{\kappa}^{-}$. Furthermore, we can write
$[I_{q}\otimes\Lambda_{\kappa}^{r*}]=[I_{q}\otimes\Lambda_{\kappa}^{*}]^{r}$
and it is easy to see that the nilpotency of $\Lambda_{\kappa}$ is inherited by
the matrix $[I_{q}\otimes\Lambda_{\kappa}^{*}]$. Hence
\begin{eqnarray}\label{eqn:monotone}
(\sigma,\,r)\in\setP_{\kappa}^{-} \implies (\sigma,\,r-1)\in\setP_{\kappa}^{-}
\end{eqnarray}
for $r\neq 0$ (see the proof of Lemma~\ref{lem:pivotal}). For each $\kappa<\nu$ with  $\mu_{\kappa}\in\Real$ choose now $f_{\kappa}\in\M_{\kappa}$
that satisfies $B_{:\sigma}^{[\kappa]*}[I_{q}\otimes \Lambda_{\kappa}^{r*}]f_{\kappa}<0$
for all $(\sigma,\,r)\in\setP_{\kappa}^{-}$. Such choice exists thanks to convexity of $\M_{\kappa}$ (see the proof of Lemma~\ref{lem:pivotal}). Then define $y_{\sigma}^{[\kappa]}(t)=B_{:\sigma}^{[\kappa]*}[I_{q}\otimes e^{\Lambda_{\kappa}^*t}]f_{\kappa}$. The implication~\eqref{eqn:monotone} together with
the identity~\eqref{eqn:matrixexp} 
ensure the existence of a scalar $\delta_{\kappa}>0$ and a polynomial $\Delta_{\kappa}(t)$
of order $n_{\kappa}-1$
satisfying
\begin{eqnarray}\label{eqn:ysigma}
y_{\sigma}^{[\kappa]}(t)\leq\left\{
\begin{array}{cl}
-\delta_{\kappa} &\mbox{for}\ \sigma\in\I_{\kappa}^{-}\\
0 &\mbox{for}\ \sigma\in\I_{\kappa}\setminus\I_{\kappa}^{-}\\
\Delta_{\kappa}(t) &\mbox{for}\ \sigma\notin\I_{\kappa}
\end{array}
\right.
\end{eqnarray}
for all $t\geq 0$. As for $y_{\sigma}^{[\nu]}(t)=B_{:\sigma}^{[\nu]*}[I_{q}\otimes e^{\Lambda_{\nu}^*t}]f_{\nu}$ note that we can write
\begin{eqnarray*}
y_{\sigma}^{[\nu]}(t)\leq\left\{
\begin{array}{cl}
0 &\mbox{for}\ \sigma\in\I_{\nu}\\
\Delta_{\nu}(t) &\mbox{for}\ \sigma\notin\I_{\nu}
\end{array}
\right.
\end{eqnarray*}
for some polynomial $\Delta_{\nu}(t)$ of order $n_{\nu}-1$ .
Let us now construct the vector
\begin{eqnarray*}
\eta=\sum_{\kappa\leq\nu,\,\mu_{\kappa}\in\Real}c_{\kappa}[I_{q}\otimes U_{\kappa}]f_{\kappa}
\end{eqnarray*}
where $c_{\nu}=1$ and the remaining scalars
$c_{\kappa}>0$ satisfy
\begin{eqnarray}\label{eqn:ball}
c_{\kappa}\delta_{\kappa}e^{\mu_{\kappa}t}\geq\sum_{\kappa<\gamma\leq\nu,\,\mu_{\gamma}\in\Real} c_{\gamma}\Delta_{\gamma}(t)e^{\mu_{\gamma}t}\,.
\end{eqnarray}
We can always find such scalars because (for real eigenvalues) we have $\mu_{\kappa}>\mu_{\gamma}$ when $\kappa<\gamma$.

Note that for $\kappa<\nu$ we have $f_{\kappa}\in\M_{\kappa}\subset\Q_{\kappa}^{\perp}\subset{\rm null}\,[(e_{k}-e_{\ell})^{*}\otimes I_{n_{\kappa}}]$ which allows us to write
$[(e_{k}-e_{\ell})^{*}\otimes I_{n}][I_{q}\otimes U_{\kappa}]f_{\kappa}= U_{\kappa}[(e_{k}-e_{\ell})^{*}\otimes I_{n_{\kappa}}]f_{\kappa}=0$. Hence
\begin{eqnarray*}
[(e_{k}-e_{\ell})^{*}\otimes I_{n}]\eta
&=&[(e_{k}-e_{\ell})^{*}\otimes I_{n}]\sum_{\kappa\leq\nu,\,\mu_{\kappa}\in\Real} c_{\kappa}[I_{q}\otimes U_{\kappa}]f_{\kappa}\\
&=&\sum_{\kappa\leq\nu,\,\mu_{\kappa}\in\Real} c_{\kappa}U_{\kappa}[(e_{k}-e_{\ell})^{*}\otimes I_{n_{\kappa}}]f_{\kappa}\\
&=& U_{\nu}[(e_{k}-e_{\ell})^{*}\otimes I_{n_{\nu}}]f_{\nu}\\
&\neq&0
\end{eqnarray*}
because $U_{\nu}$ is full column rank and $[(e_{k}-e_{\ell})^{*}\otimes I_{n_{\nu}}]f_{\nu}\neq 0$. Hence $\eta\notin{\rm null}\,[(e_{k}-e_{\ell})^{*}\otimes I_{n}]$. Let us now study the behavior of the entries of $\Bbig^{*}e^{\Abig^{*}t}\eta$. Recall that $B_{:\sigma}^{*}$ denotes the $\sigma$th row of $\Bbig^{*}$. We can write for $\sigma\in\{1,\,2,\,\ldots,\,p\}$
\begin{eqnarray}\label{eqn:decompose}
B_{:\sigma}^{*}e^{\Abig^{*}t}\eta
&=&\sum c_{\kappa}B_{:\sigma}^{*}[I_{q}\otimes e^{A^{*}t}][I_{q}\otimes U_{\kappa}]f_{\kappa}\nonumber\\
&=&\sum c_{\kappa}B_{:\sigma}^{*}[I_{q}\otimes U_{\kappa}][I_{q}\otimes e^{A_{\kappa}^{*}t}]f_{\kappa}\nonumber\\
&=&\sum c_{\kappa}B_{:\sigma}^{[\kappa]*}[I_{q}\otimes e^{A_{\kappa}^{*}t}]f_{\kappa}\nonumber\\
&=&\sum c_{\kappa}B_{:\sigma}^{[\kappa]*}[I_{q}\otimes e^{\mu_{\kappa}t}e^{\Lambda_{\kappa}^{*}t}]f_{\kappa}\nonumber\\
&=&\sum c_{\kappa}e^{\mu_{\kappa}t}y_{\sigma}^{[\kappa]}(t)
\end{eqnarray}
where the summation is through the indices $\kappa\leq\nu,\,\mu_{\kappa}\in\Real$ and
we used the identities $e^{A^{*}t}U_{\kappa}=U_{\kappa}e^{A^{*}_{\kappa}t}$ (because $A^{*}U_{\kappa}=U_{\kappa}A^{*}_{\kappa}$) and $e^{A^{*}_{\kappa}t}=e^{\mu_{\kappa}t}e^{\Lambda^{*}_{\kappa}t}$ (because $A^{*}_{\kappa}=\mu_{\kappa}I_{n_{\kappa}}+\Lambda_{\kappa}^{*}$). Note that for each $\sigma\notin\I_{\nu}$
there is a unique $\kappa_{\sigma}<\nu$ satisfying $\sigma\in\I_{\kappa_{\sigma}}^{-}$ and $\sigma\in\I_{\kappa}$
for $\kappa<\kappa_{\sigma}$. Hence for $\sigma\notin\I_{\nu}$ we can decompose \eqref{eqn:decompose} as
\begin{eqnarray*}
B_{:\sigma}^{*}e^{\Abig^{*}t}\eta
&=&\underbrace{\sum_{\kappa<\kappa_{\sigma}} c_{\kappa}e^{\mu_{\kappa}t}y_{\sigma}^{[\kappa]}(t)}_{\leq 0}
+c_{\kappa_{\sigma}}e^{\mu_{\kappa_{\sigma}}t}y_{\sigma}^{[\kappa_{\sigma}]}(t)
+\sum_{\kappa>\kappa_{\sigma}} c_{\kappa}e^{\mu_{\kappa}t}y_{\sigma}^{[\kappa]}(t)\\
&\leq&-c_{\kappa_{\sigma}}\delta_{\kappa_{\sigma}}e^{\mu_{\kappa_{\sigma}t}}
+\sum_{\kappa>\kappa_{\sigma}}c_{\kappa}\Delta_{\kappa}(t)e^{\mu_{\kappa}t}\\
&\leq&0
\end{eqnarray*}
where we used \eqref{eqn:ball}. If $\sigma\in\I_{\nu}$ then $\sigma\in\I_{\kappa}$
for all $\kappa\leq\nu$ and we have $y_{\sigma}^{[\kappa]}(t)\leq 0$ for all $\kappa\leq\nu$ yielding $B_{:\sigma}^{*}e^{\Abig^{*}t}\eta\leq 0$. Hence
$\Bbig^{*}e^{\Abig^{*}t}\eta\leq 0$ for all $t\geq 0$, i.e., $\eta\in\R^{\circ}$. This implies however $\R^{\circ}\not\subset{\rm null}\,[(e_{k}-e_{\ell})^{*}\otimes I_{n}]$ for we earlier obtained $\eta\notin{\rm null}\,[(e_{k}-e_{\ell})^{*}\otimes I_{n}]$.

Suppose now the condition 2b fails for some $\mu_{\nu}\notin\Real$. Without loss of generality assume that the condition 2a is satisfied and no eigenvalue with real part strictly larger
than ${\rm Re}\,\mu_{\nu}$ violates 2b. That ${\rm inc}\,(Q_{:\sigma}^{[\nu]})_{\sigma\in\I_{\nu}}$
is not $(k,\,\ell)$-connected implies the
existence of a (complex) vector $f_{\nu}\notin{\rm null}\,[(e_{k}-e_{\ell})^{*}\otimes I_{n_{\nu}}]$ satisfying $B_{:\sigma}^{[\nu]*}[I_{q}\otimes \Lambda_{\nu}^{r*}]f_{\nu}=0$ for all $\sigma\in\I_{\nu}$ and $r\in\{0,\,1,\,\ldots,\,n_{\nu}-1\}$. Without loss of generality we can 
assume ${\rm Re}\,(U_{\nu}[(e_{k}-e_{\ell})^{*}\otimes I_{n_{\nu}}]f_{\nu})\neq 0$, for $U_{\nu}$ is full column rank and we can always replace $f_{\nu}$ with $\sqrt{-1}f_{\nu}$. Let $\M_{\kappa}$ and $\setP_{\kappa}^{-}$ be as before. For each $\kappa<\nu$ with $\mu_{\kappa}\in\Real$ choose $f_{\kappa}\in\M_{\kappa}$
that satisfies $B_{:\sigma}^{[\kappa]*}[I_{q}\otimes \Lambda_{\kappa}^{r*}]f_{\kappa}<0$
for all $(\sigma,\,r)\in\setP_{\kappa}^{-}$. Recall $y_{\sigma}^{[\kappa]}(t)=B_{:\sigma}^{[\kappa]*}[I_{q}\otimes e^{\Lambda_{\kappa}^*t}]f_{\kappa}$.
Now \eqref{eqn:monotone} and \eqref{eqn:matrixexp}
allows us to find scalars $\delta_{\kappa}>0$ and polynomials $\Delta_{\kappa}(t)$
of order $n_{\kappa}-1$
satisfying \eqref{eqn:ysigma} for all $t\geq 0$ and all $\kappa<\nu$
with $\mu_{\kappa}\in\Real$. As for the index $\nu$ note that $y_{\sigma}^{[\nu]}(t)$
is complex and we can write
\begin{eqnarray*}
|y_{\sigma}^{[\nu]}(t)|\leq\left\{
\begin{array}{cl}
0 &\mbox{for}\ \sigma\in\I_{\nu}\\
\Delta_{\nu}(t) &\mbox{for}\ \sigma\notin\I_{\nu}
\end{array}
\right.
\end{eqnarray*}
for some polynomial $\Delta_{\nu}(t)$ of order (at most) $n_{\nu}-1$.
If $\Lambda_{\nu}=0$ we let $\Delta_{\nu}(t)$ to be of order zero, i.e., $\Delta_{\nu}(t)\equiv\Delta_{\nu}(0)\geq0$. (This we can do thanks to \eqref{eqn:matrixexp}.)
Let us now construct the vector
\begin{eqnarray*}
\eta={\rm Re}\,([I_{q}\otimes U_{\nu}]f_{\nu})+\sum_{\kappa<\nu,\,\mu_{\kappa}\in\Real}c_{\kappa}[I_{q}\otimes U_{\kappa}]f_{\kappa}
\end{eqnarray*}
where the real scalars $c_{\kappa}>0$ satisfy
\begin{eqnarray}\label{eqn:ballbearing}
c_{\kappa}\delta_{\kappa}e^{\mu_{\kappa}t}\geq \Delta_{\nu}(t)|e^{\mu_{\nu}t}|+\sum_{\kappa<\gamma<\nu,\,\mu_{\gamma}\in\Real} c_{\gamma}\Delta_{\gamma}(t)e^{\mu_{\gamma}t}\,.
\end{eqnarray}
We can always find such scalars thanks to two reasons. First, for $\mu_{\kappa},\,\mu_{\gamma}\in\Real$ we have $\mu_{\kappa}>\mu_{\gamma}$ when $\kappa<\gamma$. Second, if there is $\mu_{\kappa}\in\Real$
satisfying ${\rm Re}\,\mu_{\nu}=\mu_{\kappa}$ then by Assumption~\ref{assume:eigen}
we have $\Delta_{\nu}(t)\equiv\Delta_{\nu}(0)$.

As before for $\kappa<\nu$ we have
$[(e_{k}-e_{\ell})^{*}\otimes I_{n}][I_{q}\otimes U_{\kappa}]f_{\kappa}=0$. Hence
\begin{eqnarray*}
[(e_{k}-e_{\ell})^{*}\otimes I_{n}]\eta
&=& [(e_{k}-e_{\ell})^{*}\otimes I_{n}]{\rm Re}\,([I_{q}\otimes U_{\nu}]f_{\nu})\\
&=& {\rm Re}\,([(e_{k}-e_{\ell})^{*}\otimes I_{n}][I_{q}\otimes U_{\nu}]f_{\nu})\\
&=& {\rm Re}\,(U_{\nu}[(e_{k}-e_{\ell})^{*}\otimes I_{n_{\nu}}]f_{\nu})\\
&\neq&0\,.
\end{eqnarray*}
Hence $\eta\notin{\rm null}\,[(e_{k}-e_{\ell})^{*}\otimes I_{n}]$. Let us now study the behavior of the entries of $\Bbig^{*}e^{\Abig^{*}t}\eta$. We can write for $\sigma\in\{1,\,2,\,\ldots,\,p\}$
\begin{eqnarray}\label{eqn:decompose2}
B_{:\sigma}^{*}e^{\Abig^{*}t}\eta
&=&B_{:\sigma}^{*}[I_{q}\otimes e^{A^{*}t}]{\rm Re}\,([I_{q}\otimes U_{\nu}]f_{\nu})+\sum c_{\kappa}B_{:\sigma}^{*}[I_{q}\otimes e^{A^{*}t}][I_{q}\otimes U_{\kappa}]f_{\kappa}\nonumber\\
&=&{\rm Re}\,(B_{:\sigma}^{[\nu]*}[I_{q}\otimes e^{A_{\nu}^{*}t}]f_{\nu})+\sum c_{\kappa}B_{:\sigma}^{[\kappa]*}[I_{q}\otimes e^{A_{\kappa}^{*}t}]f_{\kappa}\nonumber\\
&=&{\rm Re}\,(B_{:\sigma}^{[\nu]*}[I_{q}\otimes e^{\mu_{\nu}t}e^{\Lambda_{\nu}^{*}t}]f_{\nu})+\sum c_{\kappa}B_{:\sigma}^{[\kappa]*}[I_{q}\otimes e^{\mu_{\kappa}t}e^{\Lambda_{\kappa}^{*}t}]f_{\kappa}\nonumber\\
&=&{\rm Re}\,(e^{\mu_{\nu}t}y_{\sigma}^{[\nu]}(t))+\sum c_{\kappa}e^{\mu_{\kappa}t}y_{\sigma}^{[\kappa]}(t)
\end{eqnarray}
where the summation is through the indices $\kappa<\nu,\,\mu_{\kappa}\in\Real$.
For $\sigma\notin\I_{\nu}$ we can decompose \eqref{eqn:decompose2} as
\begin{eqnarray*}
B_{:\sigma}^{*}e^{\Abig^{*}t}\eta
&=&\underbrace{\sum_{\kappa<\kappa_{\sigma}} c_{\kappa}e^{\mu_{\kappa}t}y_{\sigma}^{[\kappa]}(t)}_{\leq 0}
+c_{\kappa_{\sigma}}e^{\mu_{\kappa_{\sigma}}t}y_{\sigma}^{[\kappa_{\sigma}]}(t)
+\sum_{\kappa_{\sigma}<\kappa<\nu} c_{\kappa}e^{\mu_{\kappa}t}y_{\sigma}^{[\kappa]}(t)
+{\rm Re}\,(e^{\mu_{\nu}t}y_{\sigma}^{[\nu]}(t))\\
&\leq&-c_{\kappa_{\sigma}}\delta_{\kappa_{\sigma}}e^{\mu_{\kappa_{\sigma}t}}
+\sum_{\kappa_{\sigma}<\kappa<\nu} c_{\kappa}\Delta_{\kappa}(t)e^{\mu_{\kappa}t}+\Delta_{\nu}(t)|e^{\mu_{\nu}t}|
\\
&\leq&0
\end{eqnarray*}
where ($\kappa_{\sigma}$ is defined earlier and) we used \eqref{eqn:ballbearing}. If $\sigma\in\I_{\nu}$ then $\sigma\in\I_{\kappa}$
for all $\kappa\leq\nu$ and we have $y_{\sigma}^{[\kappa]}(t)\leq 0$ for all $\kappa<\nu$ as well as $y_{\sigma}^{[\nu]}(t)\equiv 0$. This yields $B_{:\sigma}^{*}e^{\Abig^{*}t}\eta\leq 0$. Hence
$\Bbig^{*}e^{\Abig^{*}t}\eta\leq 0$ for all $t\geq 0$, i.e., $\eta\in\R^{\circ}$. This implies however $\R^{\circ}\not\subset{\rm null}\,[(e_{k}-e_{\ell})^{*}\otimes I_{n}]$ for we already obtained $\eta\notin{\rm null}\,[(e_{k}-e_{\ell})^{*}\otimes I_{n}]$.

{\em 2 $\implies$ 1.} Suppose the condition 1 fails. Then we can find a vector $\eta\in\R^{\circ}$
that satisfies $[(e_{k}-e_{\ell})^{*}\otimes I_{n}]\eta\neq 0$. Let vectors $f_{\kappa}\in(\Complex^{n_{\kappa}})^{q}$ be such that $\eta=\sum_{\kappa=1}^{m}[I_{q}\otimes U_{\kappa}]f_{\kappa}$. We can write
\begin{eqnarray*}
0
\neq[(e_{k}-e_{\ell})^{*}\otimes I_{n}]\eta
=[(e_{k}-e_{\ell})^{*}\otimes I_{n}]\sum_{\kappa=1}^{m}[I_{q}\otimes U_{\kappa}]f_{\kappa}
=\sum_{\kappa=1}^{m} U_{\kappa}[(e_{k}-e_{\ell})^{*}\otimes I_{n_{\kappa}}]f_{\kappa}
\end{eqnarray*}
meaning for some $\kappa$ we have to have $U_{\kappa}[(e_{k}-e_{\ell})^{*}\otimes I_{n_{\kappa}}]f_{\kappa}\neq 0$. This implies
$[(e_{k}-e_{\ell})^{*}\otimes I_{n_{\kappa}}]f_{\kappa}\neq 0$
because $U_{\kappa}$ is full column rank.

Suppose now $\mu_{\kappa}\in\Real$. By Lemma~\ref{lem:ur} there exists $\tau\geq 0$ such that $B_{:\sigma}^{[\kappa]*}[I_{q}\otimes e^{A_{\kappa}^{*}t}]f_{\kappa}\leq 0$ for all
$\sigma\in\I_{\kappa}$ and $t\geq \tau$. Define $\tilde f_{\kappa}=[I_{q}\otimes e^{A_{\kappa}^{*}\tau}]f_{\kappa}$. We can write
\begin{eqnarray}\label{eqn:sour}
[(e_{k}-e_{\ell})^{*}\otimes I_{n_{\kappa}}]\tilde f_{\kappa}
&=& [(e_{k}-e_{\ell})^{*}\otimes I_{n_{\kappa}}][I_{q}\otimes e^{A_{\kappa}^{*}\tau}]f_{\kappa}\nonumber\\
&=& e^{A_{\kappa}^{*}\tau}[(e_{k}-e_{\ell})^{*}\otimes I_{n_{\kappa}}]f_{\kappa}\nonumber\\
&\neq& 0
\end{eqnarray}
 because $[(e_{k}-e_{\ell})^{*}\otimes I_{n_{\kappa}}]f_{\kappa}\neq 0$ and $e^{A_{\kappa}^{*}\tau}$ is nonsingular. Since we can write
$e^{A_{\kappa}^{*}t}=e^{\mu_{\kappa}t}e^{\Lambda_{\kappa}^{*}t}$ and $e^{\mu_{\kappa}t}$
is always positive, $\tilde f_{\kappa}$ belongs to
the cone $\N_{\kappa}=\{\zeta:B_{:\sigma}^{[\kappa]*}[I_{q}\otimes e^{\Lambda_{\kappa}^{*}t}]\zeta\leq 0
\ \mbox{for all}\ \sigma\in\I_{\kappa}\ \mbox{and}\ t\geq 0\}$. Let $\M_{\kappa}=({\rm cone}\,[{\rm inc}\,(Q_{:\sigma}^{[\kappa]})_{\sigma\in\I_{\kappa}}])^{\circ}$. That is,
$\M_{\kappa}=\{\zeta:B_{:\sigma}^{[\kappa]*}[I_{q}\otimes \Lambda_{\kappa}^{r*}]\zeta\leq0\ \mbox{for all}\ \sigma\in\I_{\kappa}\ \mbox{and}\ r\}$.
Since $\Q_{\kappa}$ is the largest
subspace contained in ${\rm cone}\,[{\rm inc}\,(Q_{:\sigma}^{[\kappa]})_{\sigma\in\I_{\kappa}}]$, the smallest subspace containing
its polar $\M_{\kappa}$ is $\Q_{\kappa}^{\perp}$. By Lemma~\ref{lem:pivotal} the subspace $\Q_{\kappa}^{\perp}$
contains also $\N_{\kappa}$. Therefore we have
$\tilde f_{\kappa}\in\Q_{\kappa}^{\perp}$. Then \eqref{eqn:sour} yields $\Q_{\kappa}^{\perp}\not\subset{\rm null}\,[(e_{k}-e_{\ell})^{*}\otimes I_{n_{\kappa}}]$.
Consequently, $\Q_{\kappa}\not\supset{\rm range}\,[(e_{k}-e_{\ell})\otimes I_{n_{\kappa}}]$.
This allows us to write ${\rm cone}\,[{\rm inc}\,(Q_{:\sigma}^{[\kappa]})_{\sigma\in\I_{\kappa}}]\not\supset{\rm range}\,[(e_{k}-e_{\ell})\otimes I_{n_{\kappa}}]$ because ${\rm range}\,[(e_{k}-e_{\ell})\otimes I_{n_{\kappa}}]$ is a subspace and
$\Q_{\kappa}$ is the largest
subspace that ${\rm cone}\,[{\rm inc}\,(Q_{:\sigma}^{[\kappa]})_{\sigma\in\I_{\kappa}}]$ contains. Then $[{\rm inc}\,(Q_{:\sigma}^{[\kappa]})_{\sigma\in\I_{\kappa}}]$ is not strongly $(k,\,\ell)$-connected by definition.

We now consider the other possibility: $\mu_{\kappa}\notin\Real$. By Lemma~\ref{lem:ur} there exists $\tau\geq 0$ such that ${\rm Re}\,(B_{:\sigma}^{[\kappa]*}[I_{q}\otimes e^{A_{\kappa}^{*}t}])f_{\kappa}\leq 0$ for all
$\sigma\in\I_{\kappa}$ and $t\geq \tau$. This implies
$B_{:\sigma}^{[\kappa]*}[I_{q}\otimes e^{A_{\kappa}^{*}t}]f_{\kappa}\equiv 0$ for all
$\sigma\in\I_{\kappa}$ because $\mu_{\kappa}$ is the single distinct eigenvalue of
$A_{\kappa}^{*}$ and it has nonzero imaginary part. (Otherwise, the oscillations would not let
the signal $t\mapsto {\rm Re}\,(B_{:\sigma}^{[\kappa]*}[I_{q}\otimes e^{A_{\kappa}^{*}t}]f_{\kappa})$ stay nonpositive indefinitely.) Then the identity $e^{A_{\kappa}^{*}t}=e^{\mu_{\kappa}t}e^{\Lambda_{\kappa}^{*}t}$ implies
$B_{:\sigma}^{[\kappa]*}[I_{q}\otimes e^{\Lambda_{\kappa}^{*}t}]f_{\kappa}\equiv 0$
for all
$\sigma\in\I_{\kappa}$. By differentiating $B_{:\sigma}^{[\kappa]*}[I_{q}\otimes e^{\Lambda_{\kappa}^{*}t}]f_{\kappa}\equiv 0$ (with respect to $t$) sufficiently many times and evaluating the derivatives at $t=0$ we at once see that $f_{\kappa}$ belongs to ${\rm null}\,[{\rm inc}\,(Q_{:\sigma}^{[\kappa]})_{\sigma\in\I_{\kappa}}]^{*}$. Since $[(e_{k}-e_{\ell})^{*}\otimes I_{n_{\kappa}}]f_{\kappa}\neq 0$, this means ${\rm null}\,[{\rm inc}\,(Q_{:\sigma}^{[\kappa]})_{\sigma\in\I_{\kappa}}]^{*}\not\subset{\rm null}\,[(e_{k}-e_{\ell})^{*}\otimes I_{n_{\kappa}}]$. Hence ${\rm range}\,[{\rm inc}\,(Q_{:\sigma}^{[\kappa]})_{\sigma\in\I_{\kappa}}]\not\supset{\rm range}\,[(e_{k}-e_{\ell})\otimes I_{n_{\kappa}}]$, i.e., $[{\rm inc}\,(Q_{:\sigma}^{[\kappa]})_{\sigma\in\I_{\kappa}}]$ is not $(k,\,\ell)$-connected.
\end{proof}
\vspace{0.12in}

The proof of the below result is similar to that of Lemma~\ref{lem:onade}.

\begin{lemma}\label{lem:onaid}
The following are equivalent.
\begin{enumerate}
\item The array $[A,\,(B_{::})]$ is positively $(k,\,\ell)$-controllable.
\item $\R\supset{\rm range}\,[(e_{k}-e_{\ell})\otimes I_{n}]$.
\end{enumerate}
\end{lemma}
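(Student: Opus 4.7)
The plan is to follow the proof of Lemma~\ref{lem:onade} throughout, replacing the controllable subspace ${\rm range}\,\Wbig$ by the positive reachable cone $\R$. The one extra ingredient is a time-matching observation: if a finite-dimensional subspace lies in $\R$, then it lies in the fixed-time reachable cone $\R(T):=\{\int_0^T e^{\Abig(T-t)}\Bbig\yu(t)\,dt:\yu:[0,T]\to\Real_{\geq 0}^p\}$ for all sufficiently large $T$. I would prove this by picking a basis $v_1,\ldots,v_N$ of the subspace, using the hypothesis to locate each $\pm v_i$ in some $\R(\tau_i^{\pm})$, padding the corresponding inputs with zeros at the \emph{beginning} (which preserves the reached point while enlarging the available time), and choosing $T=\max_i\{\tau_i^{+},\tau_i^{-}\}$; convexity of $\R(T)$ together with the presence of both $\pm v_i$ then places the whole subspace inside $\R(T)$.

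For 1$\implies$2 I would, for each $y\in\Real^n$, run the array from $\ex(0)=(e_\ell-e_k)\otimes y$. Since $x_{\rm av}(0)=0$ and the actuation is relative, $x_{\rm av}(\tau)=0$ as well, and the defining conditions of positive $(k,\,\ell)$-controllability combined with this averaging identity force $\ex(\tau)=0$ at some finite time $\tau(y)$ under some nonnegative input. The variation-of-constants formula then gives $(e_k-e_\ell)\otimes e^{A\tau(y)}y=-e^{\Abig\tau(y)}\ex(0)\in\R(\tau(y))$. Padding that input with zeros at the \emph{end} (which multiplies the reached point by $e^{\Abig(T-\tau(y))}$) upgrades this to $(e_k-e_\ell)\otimes e^{AT}y\in\R(T)$ for every $T\geq\tau(y)$. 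Running this construction across a basis $y_1,\ldots,y_n$ of $\Real^n$ and its negatives with a common $T$ dominating all reach times, invertibility of $e^{AT}$ makes $\{e^{AT}y_i\}$ a basis of $\Real^n$, and the preparatory observation yields ${\rm range}\,[(e_k-e_\ell)\otimes I_n]\subset\R(T)\subset\R$.

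For 2$\implies$1 I would first invoke the preparatory observation to fix a finite $T>0$ with ${\rm range}\,[(e_k-e_\ell)\otimes I_n]\subset\R(T)$. Given any $\ex(0)$, set $z=(x_k(0)-x_\ell(0))/2$ and $\hat\xi=(e_\ell-e_k)\otimes e^{AT}z$; since $\hat\xi$ lies in ${\rm range}\,[(e_k-e_\ell)\otimes I_n]\subset\R(T)$, some nonnegative $\yu$ on $[0,T]$ satisfies $\int_0^T e^{\Abig(T-t)}\Bbig\yu(t)\,dt=\hat\xi$. Applying this $\yu$ to $\ex(0)$ and expanding $\ex(T)=e^{\Abig T}\ex(0)+\hat\xi$ componentwise, exactly as in the proof of Lemma~\ref{lem:onade}, gives $x_i(T)=e^{AT}x_i(0)$ for $i\neq k,\,\ell$ and $x_k(T)=x_\ell(T)=e^{AT}(x_k(0)+x_\ell(0))/2$, which is precisely the defining property of positive $(k,\,\ell)$-controllability.

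The main obstacle is the time-matching issue. The cone $\R$ is defined by existential quantification over $\tau$, while both directions need a single $\tau$ that works uniformly (across a basis for 1$\implies$2, and across all initial conditions for 2$\implies$1). Because $\R$ is merely a convex cone and not obviously closed, one cannot reduce subspace containment to a duality statement about $\R^{\circ}$; the subspace-in-cone step above is what synchronizes the existentially-quantified reach times into one fixed $T$, and without it neither implication would go through.
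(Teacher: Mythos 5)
Your proposal is correct and follows exactly the route the paper intends: the paper omits this proof, stating only that it is ``similar to that of Lemma~\ref{lem:onade}'', and your argument is precisely that adaptation, with the reachable cone $\R$ playing the role of ${\rm range}\,\Wbig$ and reachability from the origin replacing steering to the origin. Your time-matching observation (padding inputs with zeros at the beginning to synchronize reach times, then using convexity of the fixed-time cone together with the presence of $\pm v_i$ to capture the whole subspace) is exactly the extra care that the positive-control setting requires and that the paper's ``similar'' glosses over; it is correctly executed.
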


\begin{assumption}\label{assume:closed}
If ${\rm cl}\,\R\supset{\rm range}\,[(e_{k}-e_{\ell})\otimes I_{n}]$ then
$\R\supset{\rm range}\,[(e_{k}-e_{\ell})\otimes I_{n}]$.
\end{assumption}

At the time of writing this paper we do not know whether it is possible that Assumption~\ref{assume:closed} is violated. For the benchmark example of network of double integrators it is not difficult to see that it holds. In general, Assumption~\ref{assume:closed} can be shown to hold for any array $[A,\,(B_{::})]$ of chain of integrators with
\begin{eqnarray*}
A = \left[\begin{array}{ccccc}
0&1&0&\cdots &0\\
0&0&1&\cdots &0\\
\vdots&\vdots&\vdots&\ddots&\vdots\\
0&0&0&\cdots&1\\
0&0&0&\cdots&0
\end{array}\right]\quad\mbox{and}\quad{\rm inc}\,(B_{::})=G\otimes\left[\begin{array}{c}
0\\
0\\
\vdots\\
0\\
1
\end{array}\right]
\end{eqnarray*}
where $G\in\Real^{q\times p}$ is an incidence matrix with columns of the form $e_{i}-e_{j}$.

\begin{theorem}\label{thm:fathership}
Under Assumptions~\ref{assume:eigen} and~\ref{assume:closed}, the following two conditions are equivalent.
\begin{enumerate}
\item The array $[A,\,(B_{::})]$ is positively $(k,\,\ell)$-controllable.
\item The below statements simultaneously hold.
\begin{enumerate}
\item ${\rm inc}\,(Q_{:\sigma}^{[\kappa]})_{\sigma\in\I_{\kappa}}$ is strongly $(k,\,\ell)$-connected for all $\mu_{\kappa}\in\Real$.
\item ${\rm inc}\,(Q_{:\sigma}^{[\kappa]})_{\sigma\in\I_{\kappa}}$ is $(k,\,\ell)$-connected for all $\mu_{\kappa}\notin\Real$.
\end{enumerate}
\end{enumerate}
\end{theorem}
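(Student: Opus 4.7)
The plan is to chain together Lemma~\ref{lem:onaid}, Assumption~\ref{assume:closed}, and Theorem~\ref{thm:mothership}, with the bipolar theorem serving as the bridge. By Lemma~\ref{lem:onaid}, positive $(k,\,\ell)$-controllability of $[A,\,(B_{::})]$ is equivalent to $\R\supset V$, where I write $V:={\rm range}\,[(e_{k}-e_{\ell})\otimes I_{n}]$ for brevity. Since $V$ is a (closed) subspace, its polar equals its orthogonal complement, and $V^{\perp}={\rm null}\,[(e_{k}-e_{\ell})^{*}\otimes I_{n}]$. Hence the task reduces to showing that $\R\supset V$ is equivalent to the two connectivity conditions 2a and 2b.

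For the forward direction I would simply use monotonicity of the polar: $\R\supset V$ gives $\R^{\circ}\subset V^{\circ}=V^{\perp}$, and Theorem~\ref{thm:mothership} (invoking Assumption~\ref{assume:eigen}) immediately yields 2a and 2b. Note that this direction does not require Assumption~\ref{assume:closed}.

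For the backward direction, Theorem~\ref{thm:mothership} gives $\R^{\circ}\subset V^{\perp}$. Taking polars once more and using the bipolar identity $\R^{\circ\circ}={\rm cl}\,\R$ recorded at the start of Section~\ref{sec:pairposcon}, together with $(V^{\perp})^{\circ}=V$ (since $V$ is a closed subspace), I obtain ${\rm cl}\,\R\supset V$. Here is where Assumption~\ref{assume:closed} enters essentially: it upgrades the closure inclusion to $\R\supset V$ itself. One more application of Lemma~\ref{lem:onaid} then gives positive $(k,\,\ell)$-controllability.

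The technical heavy lifting has already been done in Theorem~\ref{thm:mothership} and Lemma~\ref{lem:pivotal}; consequently the only genuine subtlety left in proving Theorem~\ref{thm:fathership} is the need for Assumption~\ref{assume:closed} in the backward direction. This assumption is unavoidable in general: since $\R$ is the forward reachable cone of an LTI system that may have eigenvalues with arbitrary real parts, $\R$ need not be closed, so closure inclusions produced by polar duality do not automatically lift to inclusions for $\R$ itself. I expect that verifying closedness for broader classes of arrays (beyond the chains of integrators mentioned after Assumption~\ref{assume:closed}) would be the main obstacle to removing this assumption, but for the theorem as stated the proof is a short dualization argument on top of Theorem~\ref{thm:mothership}.
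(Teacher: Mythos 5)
Your proposal is correct and follows essentially the same route as the paper: reduce via Lemma~\ref{lem:onaid} to the cone inclusion $\R\supset{\rm range}\,[(e_{k}-e_{\ell})\otimes I_{n}]$, dualize to invoke Theorem~\ref{thm:mothership}, and use the bipolar identity $\R^{\circ\circ}={\rm cl}\,\R$ together with Assumption~\ref{assume:closed} to close the backward direction. Your observation that Assumption~\ref{assume:closed} is needed only in the backward direction is accurate and consistent with the paper's argument.
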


\begin{proof}
{\em 1$\implies$2.} Suppose the array $[A,\,(B_{::})]$ is positively $(k,\,\ell)$-controllable.
By Lemma~\ref{lem:onaid} we have $\R\supset{\rm range}\,[(e_{k}-e_{\ell})\otimes I_{n}]$.
This implies $\R^{\circ}\subset{\rm null}\,[(e_{k}-e_{\ell})^{*}\otimes I_{n}]$. Then by Assumption~\ref{assume:eigen} and Theorem~\ref{thm:mothership} the second condition follows.

{\em 2$\implies$1.} Suppose the second condition holds. By Assumption~\ref{assume:eigen} and Theorem~\ref{thm:mothership} we have $\R^{\circ}\subset{\rm null}\,[(e_{k}-e_{\ell})^{*}\otimes I_{n}]$. This implies $\R^{\circ\circ}\supset{\rm range}\,[(e_{k}-e_{\ell})\otimes I_{n}]$.
Since $\R^{\circ\circ}={\rm cl}\,\R$ Assumption~\ref{assume:closed} yields $\R\supset{\rm range}\,[(e_{k}-e_{\ell})\otimes I_{n}]$. Then by Lemma~\ref{lem:onaid} the array $[A,\,(B_{::})]$ is strongly $(k,\,\ell)$-controllable.
\end{proof}

\section{Conclusion}

For networks of relatively actuated LTI systems we established in this paper that certain controllability properties of an array and certain connectivity properties of a set of graphs obtained from the array are equivalent. The main findings rested on four theorems. First, in  Theorem~\ref{thm:con} we presented the equivalence between array controllability and graph connectivity. Then in Theorem~\ref{thm:poscon} we stated that an array can be steered by positive controls if the constructed graphs are strongly connected. Those two theorems in the first half of the paper were related to the overall controllability of the array. In the second half we focused on the problem of controlling the difference of the states of a particular pair of systems in the array. To this end, in Theorem~\ref{thm:klcon} we obtained that this pairwise controllability can be understood through pairwise connectivity of certain graphs. Finally, in Theorem~\ref{thm:fathership} we showed that positive pairwise controllability is closely related to strong pairwise connectivity of the graphs associated to the array.

\bibliographystyle{plain}
\bibliography{references}
\end{document}